\providecommand{\U}[1]{\protect\rule{.1in}{.1in}}
\numberwithin{equation}{section}
\newtheorem{theorem}{Theorem}[section]
\newtheorem{lemma}[theorem]{Lemma}
\newtheorem{proposition}[theorem]{Proposition}
\newtheorem{remark}[theorem]{Remark}
\newtheorem{definition}[theorem]{Definition}
\def\<{\langle}
\def\>{\rangle}
\def \d{{\rm d}}
\def\M{\mathcal{M}}
\def\F{\mathcal{F}}
\def\E{\mathbb{E}}
\def\N{\mathbb{N}}
\def\P{\mathbb{P}}
\def\R{\mathbb{R}}
\def\T{\mathbb{T}}
\def\p{\partial}
\def\eps{\varepsilon}
\def\vphi{\varphi}
\def\law{\operatorname{Law}}
\def\div{{\rm div}}
\def\t{\widetilde}
\def\C{\mathcal{C}}
\definecolor{dark}{rgb}{0.0, 0.42, 0.0}
\title{Propagation of chaos for multi-species moderately interacting particle systems up to Newtonian singularity}
\author{José Antonio Carrillo, Shuchen Guo and Alexandra Holzinger\footnote{Email: carrillo@maths.ox.ac.uk, guo@maths.ox.ac.uk, alexandra.holzinger@maths.ox.ac.uk,  Address: Mathematical Institute, University of Oxford, Oxford OX2 6GG, UK.}}
\date{\today}
\begin{document}

\maketitle

\begin{abstract}

We derive a class of multi-species aggregation-diffusion systems from stochastic interacting particle systems via relative entropy method with quantitative bounds. We show an algebraic $L^1$-convergence result using moderately interacting particle systems approximating attractive/repulsive singular potentials up to Newtonian/Coulomb singularities without additional cut-off on the particle level. The first step is to make use of the relative entropy between the joint distribution of the particle system and an approximated limiting aggregation-diffusion system. A crucial argument in the proof is to show convergence in probability by a stopping time argument. The second step is to obtain a quantitative convergence rate to the limiting aggregation-diffusion system from the approximated PDE system. This is shown by evaluating a combination of relative entropy and $L^2$-distance.  
\end{abstract}

\textbf{Keywords:} Multi-Species Dynamics, Interacting Particle System, Moderate Interaction, \newline Aggregation-Diffusion Equations, Mean-Field Limit, Relative Entropy


\section{Introduction}
The derivation of multi-species aggregation-diffusion systems from particle approximations is a fundamental question for modeling the interactions of large ensembles of "individuals" such as cells, electrons, ions or agents, with a large range of applications in mathematical biology, semiconductors, plasma physics and opinion dynamics. We here focus on underpinning the rigorous derivation of these equations and the challenging question of estimating the order of approximation in terms of the number of particles. These multi-species aggregation-diffusion systems on $\R^d$ read as
\begin{equation}\label{intro1}
\p_t \bar f_\alpha=\sum_{\beta=1}^{n}\div(\bar f_\alpha\nabla V_{\alpha\beta}\ast \bar f_\beta)+\sigma_{\alpha}\Delta \bar  f_\alpha,
\end{equation}
for fixed number of species $n\in \N$, and for indexes of species $\alpha,\beta=1,2,\ldots,n$.
Here, the attractive or repulsive, possibly as singular as Newtonian/Coulomb, potentials between the $\alpha$-th and $\beta$-th species are denoted by $V_{\alpha\beta}$. System \eqref{intro1} describes the evolution of the density of each subpopulation $\bar f_\alpha$ in a coupled system due to the interaction forces $\nabla V_{\alpha\beta}\ast \bar f_\beta$ between different subpopulations. An archetypic single-species example of aggregation-diffusion equations is the parabolic-elliptic Keller-Segel model for chemotaxis \cite{keller1970initiation}
$$
\p_t \bar f=\div (\bar f \nabla V\ast \bar f)+\sigma \Delta \bar f,
$$
where $V$ is the attractive Coulomb potential $V(x)=-1/|x|^{d-2}$ for dimension $d\geq 3$, $V(x)=\log |x|$ for $d=2$, and its generalisations to systems \cite{W02,SW05,KW19,KW22,carrillo2024well}. For more facets about aggregation-diffusion equations, we refer to recent reviews \cite{bailo2024aggregation,GC,CCY19} and the references therein.

In particular, system \eqref{intro1} can be regarded as a multi-species generalisation of the well-known Keller-Segel model for chemotaxis, however, there exists a wide range of applications for multi-species aggregation-diffusion equations with singular interaction kernels as considered in this article. Indeed, many natural and social phenomena arise from intracomponent and intercomponent interactions within multi-species systems. One classical example is the two-component plasma in statistical physics, where positively charged particles and negatively charged particles interact through attractive and repulsive Coulomb forces, see \cite{serfaty2024lectures} and the references therein. Similarly, the bipolar model (electrons and holes) in semiconductor theory exhibits dynamics analogous to those of a two-component plasma, \cite{MRS,jungel2009transport}. In addition, socio-economical dynamics such as spatial conflicts can be modeled by multi-species aggregation-diffusion systems, where individuals with the same opinion attract each other, while those with different opinions exhibit repulsion, for instance \cite{clifford1973model,GPY17}. These wide-ranging applications underline the importance of studying the microscopic particle derivation of \eqref{intro1}. 

From a physical point of view, each subpopulation can be considered to represent the average behaviour of a large number of interacting particles or individuals. In the classical mean-field theory, which goes back to the seminal works of \cite{kac1956foundations,mckean1967propagation}, particles are modeled by a system of coupled SDEs where the weight of interaction scales like $1/N$ with $N$ denoting the number of particles in the system. 
As $N$ grows, under  suitable assumptions on the initial data and the interaction kernel, stochastic particle systems show some average behaviour described by the nonlinear Fokker-Planck equations \eqref{intro1}. This convergence is often called the mean-field limit, more detailed discussion can be found in \cite{JW17}. Different from the mean-field scaling, the moderately interacting regime has been proposed by Oelschläger in \cite{O85} where he shows that it can be used to approximate porous-medium-like nonlinearity instead.  In this regime, each particle interacts with other $N^\vartheta$ particles with $0<\vartheta<1$, governed by a compactly supported potential $\chi^N$ which converges to Dirac-Delta function when $N$ goes to $\infty$, and the strength of interaction becomes $1/N^\vartheta$. More generally, convolving a singular potential with $\chi^N$ can be viewed as a generalisation of moderate interactions, which has attracted a lot of attention since Oelschläger, for example \cite{meleard1987propagation,jourdain1998convergence,S00,JM98,FP08} and more recently \cite{flandoli2020uniform,CDHJ21,GL23,olivera2023quantitative,olivera2024quantitative,chen2024fluctuations}. A further motivation to study this generalisation of moderate interactions is the challenging well-posedness of the stochastic interacting particle system, due to possible collisions, for singular attractive Newtonian/Coulomb potentials \cite{FJ17,FT24}. 

More precisely, the generalised moderate interaction regime considers that each species contains $N$ particles, and the microscopic model is given by the following SDE system on $\R^d$: 
\begin{equation}\label{intro2}
\,\d X_{\alpha,i}^{\eps}(t) = -\frac{1}{N}\sum_{\beta=1}^{n} \sum_{j=1}^N \nabla V_{\alpha\beta}^\eps(X_{\alpha,i}^{\eps}(t) - X_{\beta,j}^{\eps}(t)) \,\d t + \sqrt{2\sigma_{\alpha}} \,\d B_{\alpha,i}(t),
\end{equation}
for indexes of species $\alpha,\beta=1,2,\ldots,n$ and indexes of particles $i,j=1,2,\ldots,N$. Here $V_{\alpha\beta}^\eps = \chi^\eps* V_{\alpha\beta}^\eps$ are regularised potentials approximating the singular interaction kernels $V_{\alpha\beta}$ when $\eps$ goes to $0$. The regularisation parameter $\eps$ can be taken to have algebraic connection with number of particle $N$ as $\eps=N^{-\ell}$, which plays a crucial role in subsequent study of fluctuations. 

In this work, we close the gap of the derivation of the multi-species aggregation-diffusion systems \eqref{intro1}, up to Coulomb singularity of the interaction kernels $V_{\alpha\beta}$, from the stochastic interacting particle system \eqref{intro2} by proving a strong global-in-time propagation of chaos result in $L^1$-norm with an algebraic rate in $N$ for $d\geq 3$. In particular, our result also covers the particle derivation for the parabolic-elliptic Keller-Segel model in $d\geq 3$ under suitable assumptions on the initial data.

Previous works considered particle system \eqref{intro2} in the classical moderate scaling regime to obtain local cross-diffusion systems \cite{CDJ19}, results that were consequently extended to more general pressure in \cite{CG24}. Moreover, cross-diffusion systems of Shigesada–Kawasaki–Teramoto type were derived in \cite{CDHJ21} from moderately interacting particle systems.  We notice that, unlike our particle system \eqref{intro2}, these above results concerning moderate interactions for multi-species systems consider a scaling regime, where the regularisation parameter $\eps$ logarithmically depends on the particle number $N$, eventually leading to \textit{logarithmic convergence rates}. Here, we overcome this obstacle for Coulomb/Newtonian interaction kernels obtaining algebraic rates. Finally, we mention that in the framework of weak-interaction scaling, nonlocal Lotka-Volterra models have been derived in \cite{baladron2012mean,FM15} with stronger assumptions on the interaction kernels. 

A crucial argument in previous approaches in order to show the strong $L^1(\R^d)$ convergence in the generalised moderate regime for single species makes use of a Taylor expansion of the singular potential up to second order derivatives, see \cite{chen2024fluctuations}, which prevents to include Coulomb singularity even in case of $n=1$. In the present article, we can indeed overcome this difficulty by combining the stopping time approach of \cite{chen2024fluctuations}, which we extend to multi-species settings, with techniques developed in \cite{HLP20}, carefully balancing the local Lipschitz constant of the approximated singular kernels in terms of $N$, to finally include Coulomb singularities without additional cut-off on the particle level on $\R^d$. 

In order to clarify the main results of this work, we start by considering the solution of the Cauchy problem for the aggregation-diffusion system on $\R^d$ with $d\geq 3$ in \eqref{intro1} written as
\begin{equation}\label{aggregation-diffusion}
\begin{cases}
\displaystyle\p_t \bar f_\alpha=\sum_{\beta=1}^{n}\div\big(\bar f_\alpha\nabla V_{\alpha\beta}\ast \bar f_\beta\big)+\sigma_{\alpha}\Delta \bar  f_\alpha,\\ 
\bar f_\alpha(0)=\bar f_\alpha^0,
\end{cases}\alpha=1,2,\ldots,n,
\end{equation}
where the interaction potential is given by $V_{\alpha\beta}=a_{\alpha\beta}V$ with the following assumptions: 
\begin{enumerate}
    \item [(H1)] the constants $a_{\alpha\beta}\in \R$, especially $a_{\alpha\beta}>0$ and $a_{\alpha\beta}<0$ corresponding to repulsive and attractive regimes between $\alpha$-th and $\beta$-th species respectively; 
    \item [(H2)] the potential $V(x)=1/|x|^s$ and $0<s\leq d-2$, which covers sub-Newtonian/Coulomb and Newtonian/Coulomb interactions;
    \item [(H3)] the linear diffusion coefficients $\sigma_\alpha>0$;
    \item[(H4)] the initial condition $\bar f_\alpha^0\geq 0$ with  $\bar f_\alpha^0\in L^1\cap L^\infty(\R^d)$.
\end{enumerate} 
We will quantify the approximation of the solution to \eqref{aggregation-diffusion} by interacting particle systems of the form \eqref{intro2}. This solution can be guaranteed to exist uniquely under some smallness assumption \eqref{smallness assumption} of every initial data $\bar f_\alpha$. More precisely, there exists a unique global solution $\bar f=(\bar f_1,\ldots, \bar f_n)$ of \eqref{aggregation-diffusion} satisfying for $\alpha=1,2,\ldots,n$,
$$
\bar f_\alpha \in L^\infty(0,T;L^1\cap L^\infty(\R^d))\cap L^2(0,T;H^1(\R^d)), \text{ for any } T>0.
$$
This is shown in Theorem \ref{well-possedness} for completeness.

We now state our assumptions on the interacting particle system \eqref{intro2}. Let $\big(\Omega,\mathcal{F}, (\mathcal{F}_t)_{t\geq 0},\P\big)$ be a given filtered probability space; $B_R\subset \R^d$ denotes the bounded ball centered at the origin with radius $R>0$ while $B_R^c$ is its complement. For $\alpha=1,2,\ldots,n$ and $i=1,2,\ldots,N$,  the underlying generalised moderately interacting particle dynamics can be written as 
\begin{equation}\label{X}
\left\{\begin{array}{l}
\displaystyle\d X_{\alpha,i}^{\eps}(t) = - \frac{1}{N}\sum_{\beta=1}^{n} \sum_{j=1}^N \nabla V_{\alpha\beta}^\eps(X_{\alpha,i}^{\eps}(t) - X_{\beta,j}^{\eps}(t)) \,\d t + \sqrt{2\sigma_{\alpha}} \,\d B_{\alpha,i}(t),\\
    \displaystyle X_{\alpha,i}^{\eps}(0) = Z_{\alpha,i},
\end{array}\right. 
\end{equation}
where the following assumptions hold:
\begin{itemize}
\item [(H5)] the potential $V_{\alpha\beta}^\eps= a_{\alpha\beta}V\ast\chi^\eps $ with mollifier $\chi^\eps(x)=\eps^{-d}\chi(\eps^{-1}x)$, where $\chi\in C_c^\infty(\R^d)$ is a radially symmetric probability density with $\rm{supp}\chi\subset B_1$ and algebraic connections between $\eps$ and $N$ holds as $\eps=N^{-\ell}$ (the range of $\ell$ refers to \eqref{range l} in the main theorem);
\item[(H6)] $\big(Z_{\alpha,i}\big)_{i\geq 1}$ is a family of i.i.d random variables on $\R^d$ with the common distribution $\bar f_\alpha^0$;
\item[(H7)] $(B_{\alpha,i})_{i\geq 1,\alpha\geq 1}$ are i.i.d $d$-dimension $\F_t$-Brownian motions which are independent of $Z_{\alpha,i}$.
\end{itemize}

We remark that by classical theory particle system \eqref{X} is well-defined due to the smoothness of the interaction kernels. Moreover, we want to highlight that we only take convolution-type regularisations on the particle level with no need for additional truncation, which has been used in previous works especially for Coulomb and super-Coulomb singularities, \cite{li2023convergence,olivera2023quantitative}.  

By It\^{o}'s formula, the joint distribution of $nN$ particles $\big[(X_{\alpha,i}^\eps)_{i=1}^{N}\big]_{\alpha=1}^n$ satisfies the following Liouville equation (Kolmogorov forward equation) on $\R^{dnN}$ as
\begin{equation}\label{Liouville}
\left\{\begin{array}{l}\p_t f_{N,\eps}=\displaystyle\sum_{\alpha,\beta=1}^n\sum_{i=1}^N\div_{x_{\alpha,i}}\big(f_{N,\eps}\frac{1}{N}\sum_{j=1}^N\nabla V_{\alpha\beta}^\eps(x_{\alpha,i}-x_{\beta,j})\big)+\sum_{\alpha=1}^n\sum_{i=1}^N\sigma_{\alpha}\Delta_{x_{\alpha,i}}f_{N,\eps},\\
f_{N,\eps}(0)=\displaystyle\prod_{\alpha=1}^n(\bar f_\alpha^0)^{\otimes N},\end{array}\right.
\end{equation}
which is a linear parabolic equation with smooth coefficients for any given $\eps>0$ and $N$. Hence one can obtain existence and uniqueness of
weak solutions in $L^\infty(0,T,L^1\cap L^\infty(\R^{dnN}))$ for any $T>0$ with suitable assumptions on the initial data by classical theory. We notice that, unlike the single species, the solution $f_{N,\eps}$ is not fully symmetric. As a result, we shall define the marginal distribution of multi-species particle systems, where particles are considered to be identical in each species, and we can select any number of particles from each species. 
\begin{definition}\label{marginal}
For any $n$-tuples $\mathbf{K}=(K_1,\ldots,K_n)$ such that   $K_\alpha\in\N$ for $\alpha=1,\ldots,n$, with $|\mathbf{K}|:=\sum_{\alpha=1}^nK_\alpha$, the $n$-species $\mathbf{K}$-th marginal distribution of \eqref{Liouville} is given by $$f_{N,\eps}^{(\mathbf{K})}:=\int_{\R^{(d\sum_{\alpha=1}^n(N-K_\alpha))}}f_{N,\eps}
\,\d x_{1,K_1+1}\cdots\,\d x_{1,N}\cdots \,\d x_{n,K_n+1}\cdots\,\d x_{n,N},$$
which describes the joint distribution of particles
$$
X_{1,1}^\eps,\ldots, X_{1,K_1}^\eps; \ldots; X_{\alpha,1}^\eps,\ldots, X_{\alpha,K_\alpha}^\eps; \ldots; X_{n,1}^\eps,\ldots, X_{n,K_n}^\eps,
$$
where we have $|\mathbf{K}|$ particles in total and $K_\alpha$ particles in each subpolulation.
\end{definition}

Similar to many related works concerning moderately interacting particle systems, for example \cite{FP08,chen2024fluctuations,GL23}, we will introduce the intermediate PDE system $\t f_{\alpha,\eps}$ $(\alpha=1,\ldots,n)$ as well as its corresponding McKean-Vlasov SDE. The PDE and SDE at the intermediate level play crucial roles  in particle approximations. 
The intermediate PDE system reads as
\begin{equation}\label{intermediate}
    \begin{cases}
    \displaystyle\p_t \t f_{\alpha,\eps}  = \sum_{\beta =1}^n\div\big(\t f_{\alpha,\eps} \nabla V_{\alpha\beta}^\eps \ast \t f_{\beta,\eps}\big) +\sigma_{\alpha} \Delta \t f_{\alpha,\eps}, \\
      \t f_{\alpha,\eps}(0)=\bar f_\alpha^0, 
    \end{cases}\alpha=1,2,\ldots,n,
\end{equation} 
where  the regularised potential $V^\eps_{\alpha\beta}$ is the same as in \eqref{X} which satisfies the corresponding assumptions,  and the initial data coincides with that in \eqref{aggregation-diffusion}. 
The above intermediate PDE system coincides with the Fokker-Planck equation of the following intermediate SDE: 
\begin{equation}\label{tildeX}
\begin{cases}
    \displaystyle\,\d \t X_{\alpha}^{\eps}(t) = -\sum_{\beta=1}^{n}  \nabla V_{\alpha\beta}^\eps\ast\t f_{\beta,\eps}(t,\t X_{\alpha}^{\eps}(t)) \,\d t + \sqrt{2\sigma_{\alpha}} \,\d \t B_{\alpha}(t),\\
\law(\t X_{\alpha}^{\eps}(t))=\t f_{\alpha,\eps}
    \vspace*{0.2cm}\\
    \displaystyle\t X_{\alpha}^{\eps}(0) = Z_{\alpha},
\end{cases}\quad \alpha=1,2,\ldots,n
\end{equation}
where $Z_{\alpha}$ is a random variables on $\R^d$ with the distribution $\bar f_\alpha^0$, the Brownian motion $\t B_{\alpha}$ is $d$-dimension $\F_t$-Brownian motions that is independent of $Z_{\alpha}$. For any fixed $\eps>0$, the existence and uniqueness of strong solution of the SDEs system \eqref{tildeX} follow by standard theory \cite[Theorem 3.1.1]{PR07}, which implies there exists a measure-valued solution of intermediate PDEs system \eqref{intermediate}. Remark that throughout this paper $\bar{\cdot}$ denotes quantities at limiting level (parameter $\eps$ independent); $\t{\cdot}$ denotes quantities at intermediate level (parameter $\eps$ dependent). 

We also denote $\t{f}_{N,\eps}$ as the tensorised solution of the intermediate PDE system \eqref{intermediate} as follows $$
\t{f}_{N,\eps}=\t{f}_{\eps,1}^{\otimes N}\otimes\cdots\otimes \t{f}_{\eps,n}^{\otimes N}.
$$
It is easy to see $\t{f}_{N,\eps}$ satisfies the following equation on $\R^{dnN}$
\begin{equation}\label{tensor intermediate}
\p_t \t{f}_{N,\eps}=\sum_{\alpha,\beta=1}^n\sum_{i=1}^N\div_{x_{\alpha,i}}\big(\t{f}_{N,\eps}\nabla V_{\alpha\beta}^\eps\ast \t{f}_{\beta,\eps}(x_{\alpha,i})\big)+\sum_{\alpha=1}^n\sum_{i=1}^N\sigma_{\alpha}\Delta_{x_{\alpha,i}}\t{f}_{N,\eps}.
\end{equation}

We are now able to state our main quantitative propagation of chaos result for the multi-species system, which requires certain regularity assumptions of the limiting PDE \eqref{aggregation-diffusion} and the intermediate PDE \eqref{intermediate}.

\begin{theorem}\label{POC} 
Under assumptions (H1)-(H7), let $\bar f=(\bar f_1,\ldots,\bar f_n)$ be the solution of aggregation-diffusion system \eqref{aggregation-diffusion} satisfying $\bar f_\alpha\in L^\infty(0,T;L^1\cap L^\infty(\R^d))\cap L^2(0,T;H^1(\R^d))$ for $\alpha=1,\ldots,n$. And let $\t f_{\eps}=(\t f_{1,\eps},\ldots,\t f_{n,\eps})$ be the solution of the intermediate PDE system \eqref{intermediate} satisfying the uniform-in-$\eps$ bound \begin{equation}\label{uniform in eps}
\max_{\alpha=1,\ldots,n}\sup_\eps\|\t f_{\alpha,\eps}\|_{L^\infty(0,T;L^1\cap L^\infty)\cap L^2(0,T;H^1)}<C(T).
\end{equation} We further assume that the parameter $\eps$ has algebraic connection with $N$ as $\eps=N^{-\ell}$ with the range 
\begin{equation}\label{range l}
\left\{\begin{array}{cc}
    &\displaystyle0<\ell<\frac{1} {C_0},\quad\text{when } s=d-2,\\&
 \\&\displaystyle0<\ell<\frac{1}{2s+4},\quad\text{when } s<d-2,
\end{array}\right.
\end{equation}
where  $C_0$  is a constant that $$C_0=C_0(T,d,n,\max_{\alpha,\beta}|a_{\alpha\beta}|, \|\chi\|_{W^{2,1}\cap W^{2,\infty}},\max_\alpha\sup_\eps\|\t f_{\eps,\alpha}\|_{L^\infty(0,T;L^1\cap L^\infty)}).$$

Then the relative entropy between any $\mathbf{K}$-th marginal distribution defined in Definition \ref{marginal} and tensorised solution of \eqref{aggregation-diffusion} has the following bound:
\begin{equation}\label{stronger POC}
\sup_{t\in[0,T]}\big\|f_{N,\eps}^{(\mathbf{K})}-\prod_{\alpha=1}^n\bar f_\alpha^{\otimes K_\alpha}\big\|_{L^1(\R^{d|\mathbf{K}|})}\leq \frac{C(T)}{N^{\zeta}}, \quad 
\alpha=1,2,\ldots,n,
\end{equation}
where  the constant $C$ depends on $\max_\alpha K_\alpha$ while does not depend on  $N$, and the parameter is given as $\zeta=\min\{\ell,\frac{1}{2}-\ell(s+2)-\varrho\}$ for some $\varrho>0$ arbitrarily  small.
\end{theorem}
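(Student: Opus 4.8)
The plan is to pass through the tensorised intermediate density and split by the triangle inequality,
$$
\big\|f_{N,\eps}^{(\mathbf K)}-\textstyle\prod_{\alpha}\bar f_\alpha^{\otimes K_\alpha}\big\|_{L^1}\le \big\|f_{N,\eps}^{(\mathbf K)}-\textstyle\prod_{\alpha}\t f_{\alpha,\eps}^{\otimes K_\alpha}\big\|_{L^1}+\big\|\textstyle\prod_{\alpha}\t f_{\alpha,\eps}^{\otimes K_\alpha}-\textstyle\prod_{\alpha}\bar f_\alpha^{\otimes K_\alpha}\big\|_{L^1},
$$
handling the two contributions by the two methods announced in the abstract. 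The second, purely deterministic, term is easy: telescoping the two products one factor at a time and using that every factor is a probability density bounded in $L^1\cap L^\infty$ uniformly in $\eps$ (by (H4) and \eqref{uniform in eps}), it is $\le(\max_\alpha K_\alpha)\sum_{\alpha=1}^n\|\t f_{\alpha,\eps}-\bar f_\alpha\|_{L^1}$, so everything reduces to the two displayed kinds of estimates.

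\emph{Step 1: intermediate PDE $\to$ limiting PDE.} Set $w_\alpha:=\t f_{\alpha,\eps}-\bar f_\alpha$. Subtracting \eqref{aggregation-diffusion} from \eqref{intermediate}, $w_\alpha$ solves a linear drift--diffusion system whose inhomogeneity splits into terms built from $\nabla V_{\alpha\beta}^\eps\ast w_\beta$ (multiplied by the bounded densities $\bar f_\alpha,\t f_{\alpha,\eps}$) and the mollification defect $\div\big(\bar f_\alpha\,(\nabla V_{\alpha\beta}^\eps-\nabla V_{\alpha\beta})\ast\bar f_\beta\big)$. I would estimate a combined functional of the form $\sum_\alpha\big(\mathcal{H}(\t f_{\alpha,\eps}\,|\,\bar f_\alpha)+\tfrac12\|w_\alpha\|_{L^2}^2\big)$: differentiating the $L^2$-part and exploiting the $L^2(0,T;H^1)$-regularity one absorbs the singular convolutions into the diffusion via Hardy--Littlewood--Sobolev, while the relative-entropy part controls the residual $L\log L$ contributions. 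All structural constants then depend only on the $N$-independent bounds (H4), \eqref{uniform in eps}, and the defect contributes $\big\|(\nabla V_{\alpha\beta}^\eps-\nabla V_{\alpha\beta})\ast\bar f_\beta\big\|_{L^2\cap L^\infty}\lesssim\eps=N^{-\ell}$. Grönwall then gives $\sum_\alpha\|\t f_{\alpha,\eps}-\bar f_\alpha\|_{L^1}\le C(T)N^{-\ell}$, the entry $\ell$ of $\zeta$.

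\emph{Step 2: particle system $\to$ intermediate PDE.} Here I would use the relative entropy $H_N(t):=\int_{\R^{dnN}}f_{N,\eps}(t)\log\frac{f_{N,\eps}(t)}{\t f_{N,\eps}(t)}$ between \eqref{Liouville} and \eqref{tensor intermediate}. Differentiating, the $\nabla\log\t f_{N,\eps}$-terms cancel and one is left with a negative Fisher-information term plus $\sum_{\alpha,i}\int f_{N,\eps}\,\mathcal R_{\alpha,i}\cdot\nabla_{x_{\alpha,i}}\log\frac{f_{N,\eps}}{\t f_{N,\eps}}$, where $\mathcal R_{\alpha,i}:=\sum_{\beta}\nabla V_{\alpha\beta}^\eps\ast\t f_{\beta,\eps}(x_{\alpha,i})-\tfrac1N\sum_{\beta,j}\nabla V_{\alpha\beta}^\eps(x_{\alpha,i}-x_{\beta,j})$ is the empirical-force fluctuation. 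Young's inequality absorbs half of the Fisher information, and since $H_N(0)=0$ everything reduces to bounding $\int_0^T\sum_{\alpha,i}\E_{f_{N,\eps}}[|\mathcal R_{\alpha,i}|^2]\,\d t$. Since no a priori control of the marginals of $f_{N,\eps}$ is available and, with no particle-level cut-off, $\|\nabla V_{\alpha\beta}^\eps\|_{L^\infty}\lesssim\eps^{-(s+1)}=N^{\ell(s+1)}$ and $\|\nabla^2 V_{\alpha\beta}^\eps\|_{L^\infty}\lesssim\eps^{-(s+2)}=N^{\ell(s+2)}$ blow up — so the classical large-deviation substitute is unavailable — I would, following and extending \cite{chen2024fluctuations} and incorporating the local-Lipschitz balancing of \cite{HLP20}, introduce a stopping time $\tau_N$, the first time any of the mollified empirical densities $\chi^\eps\ast\big(\tfrac1N\sum_j\delta_{X_{\beta,j}(t)}\big)$ leaves a fixed $L^\infty$-neighbourhood of $\t f_{\beta,\eps}(t)$. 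On $\{t<\tau_N\}$ the empirical interaction forces are bounded uniformly in $\eps$ (Hardy--Littlewood--Sobolev, using $\nabla V_{\alpha\beta}\in L^{d/(s+1),\infty}$), so the interacting drift is Lipschitz with an $\eps$-independent constant, and a quantitative law of large numbers together with the smoothing by $\nabla V_{\alpha\beta}^\eps$ yields $\E_{f_{N,\eps}}[\mathbf 1_{t<\tau_N}|\mathcal R_{\alpha,i}|^2]\lesssim N^{2\ell(s+2)+2\varrho-1}$; on $\{t\ge\tau_N\}$ one uses the crude pointwise bound $|\mathcal R_{\alpha,i}|\lesssim N^{\ell(s+1)}$ times $\P(\tau_N<T)$, which is rendered algebraically small through a synchronous coupling of \eqref{X} with $N$ i.i.d.\ copies of \eqref{tildeX}, whose Grönwall constant is the local Lipschitz constant $N^{\ell(s+2)}$, combined with concentration for the i.i.d.\ system. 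Balancing this local Lipschitz constant against the $N^{-1/2}$ gain is precisely what enforces $\ell(s+2)<\tfrac12$, i.e.\ the range \eqref{range l}. Summing over the $nN$ indices gives $\sup_{[0,T]}H_N\le C(T)N^{2\ell(s+2)+2\varrho}$, and then the Csiszár--Kullback--Pinsker inequality plus the sub-additivity of relative entropy with respect to the tensorised reference $\t f_{N,\eps}$ (applied species by species, since $f_{N,\eps}$ is exchangeable within each species) gives
$$
\big\|f_{N,\eps}^{(\mathbf K)}-\textstyle\prod_{\alpha}\t f_{\alpha,\eps}^{\otimes K_\alpha}\big\|_{L^1}\le\sqrt{\tfrac{2|\mathbf K|}{N}\sup_{[0,T]}H_N}\le C(T,\max_\alpha K_\alpha)\,N^{-(\frac12-\ell(s+2)-\varrho)}.
$$
Adding the two steps yields \eqref{stronger POC}.

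\emph{Main obstacle.} The crux is the fluctuation estimate at the Newtonian/Coulomb threshold $s=d-2$: the second-order Taylor expansion used for milder singularities fails, $\nabla^2 V_{\alpha\beta}$ is then only a weak-$L^1$ (borderline Calderón--Zygmund) kernel so the Hardy--Littlewood--Sobolev bounds on $\{t<\tau_N\}$ incur logarithmic losses, and the kernel norms $N^{\ell(s+1)},N^{\ell(s+2)}$ grow with $N$; making the probability of the bad event decay faster than these norms grow, uniformly in time, is what determines the constant $C_0$ and replaces the clean constraint by $\ell<1/C_0$ in this borderline case. The telescoping of Step 1, the parabolic energy estimates, and the entropy bookkeeping over species are by comparison routine.
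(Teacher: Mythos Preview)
Your two-step architecture (triangle inequality through the tensorised intermediate density; relative entropy for the particle-to-intermediate step; combined $L^2$/entropy functional for the intermediate-to-limit step) is exactly the paper's, and Step~1 matches Proposition~\ref{second step} essentially verbatim.

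In Step~2, however, your stopping-time argument differs from the paper's and contains a gap. You propose to bound $\E_{f_{N,\eps}}[\mathbf 1_{t<\tau_N}|\mathcal R_{\alpha,i}|^2]$ by a ``quantitative law of large numbers'', but under $f_{N,\eps}$ the particles $X_{\beta,j}^\eps$ are \emph{correlated}, so no LLN is available; and your stopping time $\tau_N$ (first exit of the mollified empirical measure from an $L^\infty$-neighbourhood of $\t f_{\beta,\eps}$) is essentially the propagation-of-chaos statement you are trying to prove, so bounding $\P(\tau_N<T)$ becomes circular. The paper avoids both issues by a different organisation: it first inserts the synchronously coupled i.i.d.\ particles $\t X_{\alpha,i}^\eps$ of \eqref{tildeXi} and splits $|\mathcal R_{\alpha,i}|^2$ into $J_1+J_2+J_3$; the LLN (Lemma~\ref{LLN}) is applied \emph{only} to $J_2$, which involves the independent $\t X$'s exclusively. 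The pieces $J_1,J_3$ are then controlled by the convergence in probability $\P(\max_{\alpha,i}|\t X_{\alpha,i}^\eps-X_{\alpha,i}^\eps|\ge N^{-\lambda})\le C N^{-\gamma}$ (Proposition~\ref{convergence in prob}), whose stopping time $\tau_\lambda$ is defined through the \emph{particle distance} $|\t X-X|$, not through the empirical density. This distance can be analysed directly from the coupled SDEs without any a~priori chaos input, breaking the circularity.

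A second point concerns the Coulomb endpoint. You correctly identify the obstruction (second-order Taylor expansion of $\nabla V^\eps$ fails since $\nabla^2 V$ is only weak-$L^1$), but the concrete mechanism the paper uses for $J_3$ is not the boundedness of empirical forces on the good event: it is the explicit auxiliary majorant $K^\eps(x)=|x|^{-(s+2)}\wedge(4\eps)^{-(s+2)}$, which furnishes a pointwise Lipschitz bound $|\nabla V^\eps_{\alpha\beta}(x+\xi)-\nabla V^\eps_{\alpha\beta}(x)|\le C K^\eps(x)|\xi|$ valid for $|\xi|\le 2\eps$ (this is why one needs $\lambda>\ell$). One then applies the LLN once more, this time to $K^\eps(\t X_i-\t X_j)$, and uses that $\|K^\eps\ast\t f_{\beta,\eps}\|_{L^\infty}\le C$ (sub-Coulomb) or $\le C\log N$ (Coulomb). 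It is precisely this $\log N$ prefactor entering the Gr\"onwall loop in the proof of Proposition~\ref{convergence in prob} that produces the opaque constraint $\ell<1/C_0$ replacing $\ell<1/(2s+4)$ at $s=d-2$.
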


\begin{remark}\
\begin{itemize}
    \item[i)] The existence and uniqueness of such the solution of \eqref{aggregation-diffusion} in $L^\infty(0,T;L^1\cap L^\infty)\cap L^2(0,T;H^1)$ is guaranteed by Theorem \ref{well-possedness} under small initial data \eqref{smallness assumption} for $d\geq 3$. Under the same smallness condition, uniform-in-$\eps$ bound  \eqref{uniform in eps} will be shown in Lemma \ref{uniformeps} as an intermediate step to prove the well-posedness of the limiting equation.  However, we will not use the smallness assumption directly in the proof of Theorem \ref{POC}. 
    
\item[ii)]  The Coulomb case in two dimensions, i.e., $V(x)=\log|x|$ is not covered in the above theorem. We will further comment on the two dimensional Coulomb case in Remarks \ref{gradient2d}, \ref{remark:logPDE} and \ref{lp2d}. 

\item[iii)]  We also notice that the above convergence rate can be improved. In the sub-Coulomb regime $(s<d-2)$, or Coulomb regime $(s=d-2)$ but with higher regularity of the initial data in $(H4)$ as $\bar f_\alpha^0\in W^{1,1}\cap W^{1,\infty}(\R^d),$ the  power $\zeta$ in convergence rate can be larger as
 $\zeta=\min\{\ell, 1/2-\ell(s+1)-\varrho\}$ for some $\varrho>0$ arbitrarily small. For more details, we refer to Remark \ref{higher regularity} and \ref{higher regularity 2} in Section \ref{mean-field limit}.
 \end{itemize}
\end{remark}

Our main strong $L^1$-propagation of chaos estimate \eqref{stronger POC} in Theorem \ref{POC} is obtained in a two-step procedure using the relative entropy in both parts of the proof. First, we exploit the intermediate system by evaluating the relative entropy between solutions of the Liouville equations \eqref{Liouville} and the (tensorised) intermediate PDE system \eqref{intermediate}. Consequently, as a second step in the proof, we then estimate the relative entropy between the solutions of intermediate PDE system \eqref{intermediate} and the limiting aggregation-diffusion equation \eqref{aggregation-diffusion}, in which we exploit a combination between relative entropy and $L^2$-norm estimates. We refer to these two steps as \textit{mean-field estimate} and \textit{PDE error estimate} hereafter.

The relative entropy method has been successfully used to rigorously prove mean-field limits for many different models, see for example \cite{JW17,JW18,carrillo2024relative}. In addition, in \cite{chen2023quantitative}, a combination between the relative entropy and the regularised $L^2$-estimate by Oelschläger \cite{oelschlager1987fluctuation} has been used to prove a propagation of chaos result for the viscous porous medium equation from a moderately interacting particle system. Inspired by the approach of \cite{chen2023quantitative} for single species, in this article we show convergence of the particle system in relative entropy for multi-species systems \eqref{aggregation-diffusion} by proving a convergence in probability result with arbitrary algebraic rate. 

Finally, we want to compare our result to related works in the literature. The parabolic-elliptic Keller-Segel model with sub-critical mass on the torus $\T^2$ is derived from particle systems with singular kernel in \cite{bresch2023mean,de2023attractive} via the modulated free energy method, which can be seen as a combination of the relative entropy method of \cite{JW18} and the modulated energy method of \cite{serfaty2020mean}. The weak convergence of the empirical measure for the critical mass case on the whole plane $\R^2$ is given by \cite{tardy2022convergence}. In the moderate interacting regime, propagation of chaos for the Keller-Segel model with logarithmic cut-offs are shown in \cite{chen2023rigorous,liu2019propagation}. As mentioned before, our main ingredient to show convergence in relative entropy of the stochastic particle system is a quantitative estimate in probability, which has been previously considered for bounded kernels in \cite{chen2024well}. A truncation argument in the moderate regime for both attractive and repulsive Riesz-type kernel on $\R^d$ with Coulomb and higher singularities has been used in \cite{olivera2023quantitative} to prove a quantitative convergence of regularised empirical measure to the solution of the PDE, which implies qualitative propagation of chaos. Moreover, the convergence of the regularised empirical measure towards a Keller-Segel model has been obtained in  \cite{olivera2024quantitative} by using moderately interacting particle systems without truncation on $\T^d$. Recently, this work has been gerneralised to second order systems with Besov-type interaction, see \cite{hao2024propagation}. The convergence in probability towards the single-species regularised Keller-Segel model has been considered in \cite{huang2019learning}. Under the assumption of this type of convergence, the relative entropy bound between the regularised particle system and the regularised Keller-Segel model is obtained in \cite{nikolaev2024quantitative}.  

Comparing the aforementioned results with our present work, by using a moderately interacting particle system, we are able to prove a multi-species quantitative propagation of chaos result with attractive and repulsive Coulomb-type interaction kernels, which is indeed algebraic in $N$. This can be achieved without additional truncation on the microscopic level. The algebraic nature of our result can serve as a starting point for analysing the corresponding fluctuation behaviour of the microscopic particle system, which we leave for future work. 

The article is organised as follows. Section \ref{proof} provides an overview of our main ideas and outline of the proof of Theorem \ref{POC}. In particular, we state in this section the precise mean-field estimates and the PDE error estimates, presented as Proposition \ref{RE convergence} and Proposition \ref{second step}, respectively. This section also includes the global well-posedness result stated in Theorem \ref{well-possedness}. In Section \ref{proof convprob}, we prove a convergence in probability result, which is a key component of our analysis. 
This is followed by Section \ref{mean-field limit}, where we prove our mean-field estimate in relative entropy  (Proposition \ref{RE convergence}). The final two sections are concerned with the PDE analysis of system \eqref{intro1}: Section \ref{PDE distance} derives the PDE error estimate (Proposition \ref{second step}) between the intermediate and the limiting system, while Section \ref{pde analysis} provides a detailed proof of the global well-posednes result in Theorem \ref{well-possedness}.


\section{Strategy of the proofs}\label{proof}

Our strategy is to combine the mean-field limit from the particle system \eqref{X} to an intermediate PDE \eqref{intermediate}, and a PDE error estimate between the intermediate PDE \eqref{intermediate} and the limiting PDE \eqref{aggregation-diffusion}, where we refer to Proposition \ref{RE convergence} and Proposition \ref{second step} respectively. The main result (Theorem \ref{POC}) follows from these two propositions.  
We will state these results in the sense of the relative entropy defined as follows.

\begin{definition}[Relative entropy]\label{def RE}
For any two probability density functions $\mu$ and $\nu$ on an arbitrary dimensional Euclidean space $E$, which are absolutely continuous to the Lebesgue measure, the (unrenormalised) relative entropy reads as
$$
H(\mu|\nu)=\int_{E}\mu\log \frac{\mu}{\nu}
$$
where the integral is with respect to the Lebesgue measure on $E$.  
\end{definition}

The relative entropy is a nonnegative quantity which controls the square of the $L^1$-distance by the Csiszár-Kullback-Pinsker inequality (see for instance \cite{villani2021topics}):
\begin{equation}\label{CKP}
\|\mu-\nu\|_{L^1(E)}\leq \sqrt{2H(\mu|\nu)}.    
\end{equation}

Recall Definition \ref{marginal} of multi-marginal distribution. We have the following lemma. The proof follows from \cite[Lemma 3.9]{DM01}, and we will sketch it in Section \ref{appendix sub}. 

\begin{lemma}\label{subadditivity}
The relative entropy defined on $\R^{dnN}$ between $f_{N,\eps}$ and $\t f_{N,\eps}$ controls the relative entropy of their multi-index marginals on $\R^{d|\mathbf{K}|}$ in the following way, 
\begin{equation}\label{subadd ineq}
H(f_{N,\eps}^{(\mathbf{K})} |\prod_{\alpha=1}^n\t f_{\alpha,\eps}^{\otimes K_\alpha})\leq \frac{\max_\alpha K_\alpha}{N}
H(f_{N,\eps}|\t f_{N,\eps}).
\end{equation}

\end{lemma}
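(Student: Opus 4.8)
The plan is to reduce the multi-species estimate to the single-index subadditivity of relative entropy, following the strategy of \cite[Lemma 3.9]{DM01}. The key structural observation is that $\t f_{N,\eps}$ is a tensor product, namely $\t f_{N,\eps}=\bigotimes_{\alpha=1}^n \t f_{\alpha,\eps}^{\otimes N}$, while $f_{N,\eps}$, although \emph{not} fully symmetric across species, \emph{is} symmetric under permutations of the $N$ particles within each species $\alpha$ (this symmetry is preserved by the Liouville equation \eqref{Liouville} since the initial datum $\prod_\alpha (\bar f^0_\alpha)^{\otimes N}$ is within-species symmetric and the coefficients are symmetric under within-species relabelling). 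This intra-species exchangeability is exactly what is needed to run the classical argument.

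First I would recall the general fact (a consequence of the chain rule for relative entropy together with convexity, as in \cite{DM01}): if $\nu = \bigotimes_{k=1}^{M}\nu_k$ is a product measure on $E_1\times\cdots\times E_M$ and $\mu$ is any probability measure on the same space, then for any index set $S\subset\{1,\dots,M\}$ of size $m$, averaging $H(\mu^{(T)}\,|\,\bigotimes_{k\in T}\nu_k)$ over all $m$-element subsets $T$ gives a quantity bounded by $\frac{m}{M}H(\mu\,|\,\nu)$; and for an exchangeable $\mu$ every such marginal relative entropy equals this average, hence $H(\mu^{(S)}\,|\,\bigotimes_{k\in S}\nu_k)\le \frac{m}{M}H(\mu\,|\,\nu)$. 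The second step is to apply a version of this fact adapted to the product-of-blocks structure: decompose $\R^{dnN}$ into the $n$ blocks of size $dN$, one per species. Selecting $K_\alpha$ particles out of $N$ in block $\alpha$ and using within-block exchangeability of $f_{N,\eps}$ together with the tensor structure $\t f_{N,\eps}=\bigotimes_\alpha \t f_{\alpha,\eps}^{\otimes N}$, one obtains for each $\alpha$ separately a subadditivity gain of the form $\frac{K_\alpha}{N}$ relative to the conditional/partial entropy in that block. Summing the telescoping chain-rule contributions across $\alpha=1,\dots,n$ and bounding each ratio $\frac{K_\alpha}{N}$ by $\frac{\max_\alpha K_\alpha}{N}$ yields \eqref{subadd ineq}.

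Concretely, I would proceed by induction on the number of species. For $n=1$ this is precisely the classical single-species statement $H(f^{(K)}_{N,\eps}\,|\,\t f_{\eps}^{\otimes K})\le \frac{K}{N}H(f_{N,\eps}\,|\,\t f_{\eps}^{\otimes N})$ of \cite{DM01}. For the inductive step, write $f_{N,\eps}$ on $\R^{dN}\times\R^{d(n-1)N}$, apply the chain rule $H(f_{N,\eps}\,|\,\t f_{N,\eps}) = H(f_{N,\eps}^{(1)}\,|\,\t f_{1,\eps}^{\otimes N}) + \E_{f_{N,\eps}^{(1)}}\big[H(\,\cdot\,|\,\bigotimes_{\alpha\ge 2}\t f_{\alpha,\eps}^{\otimes N})\big]$ where the first marginal $f^{(1)}_{N,\eps}$ is the law of the first species' $N$ particles, bound the first term using the single-species lemma to extract $\frac{K_1}{N}$, and bound the conditional expectation using the induction hypothesis (applied for fixed values of the first block, with the conditional laws still within-species exchangeable in species $2,\dots,n$) to extract $\frac{\max_{\alpha\ge 2}K_\alpha}{N}$; then combine and dominate by $\frac{\max_\alpha K_\alpha}{N}$.

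The main obstacle, and the point that deserves care rather than being routine, is making the exchangeability bookkeeping rigorous: one must (i) verify that $f_{N,\eps}$ is genuinely invariant under permuting the $N$ particles within each species — which requires checking that this symmetry propagates from the initial data through \eqref{Liouville}, and is what forces the marginal to be well-defined as in Definition \ref{marginal} independently of \emph{which} $K_\alpha$ particles are chosen — and (ii) handle the conditional relative entropies (regular conditional distributions) cleanly, so that the chain rule and the convexity step used in the single-species lemma apply verbatim in each block. Everything else is a direct adaptation of \cite[Lemma 3.9]{DM01}; since the present paper only sketches the proof in Section \ref{appendix sub}, I would present exactly this induction-on-species reduction there.
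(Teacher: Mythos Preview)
Your approach is correct but genuinely different from the paper's. The paper argues directly via the Donsker--Varadhan variational formula: it plugs in the test function
\[
\Phi(\mathbf X)=\sum_{p=1}^{[N/\max_\alpha K_\alpha]}\vphi\big(x_{1,(p-1)K_1+1},\dots,x_{1,pK_1};\dots;x_{n,(p-1)K_n+1},\dots,x_{n,pK_n}\big),
\]
so that the tensor structure of $\t f_{N,\eps}$ makes $\log\int\t f_{N,\eps}e^{\Phi}$ split exactly into $[N/\max_\alpha K_\alpha]$ identical pieces, while within-species exchangeability of $f_{N,\eps}$ makes $\int f_{N,\eps}\Phi$ equal to $[N/\max_\alpha K_\alpha]\int f_{N,\eps}^{(\mathbf K)}\vphi$. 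Taking the supremum over $\vphi$ gives the bound in one shot, with no conditional entropies at all.

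Your route---telescoping the chain rule across species and applying the single-species subadditivity to each conditional block---also works, and in fact yields the slightly cleaner constant $\max_\alpha K_\alpha/N$ (the paper's argument literally produces $1/[N/\max_\alpha K_\alpha]$). The price is more machinery: to compare the $\alpha$-th chain-rule term on the marginal side, which is conditioned on $(Y_1,\dots,Y_{\alpha-1})\in\R^{d(K_1+\cdots+K_{\alpha-1})}$, with the $\alpha$-th term on the full side, which is conditioned on $(X_1,\dots,X_{\alpha-1})\in\R^{d(\alpha-1)N}$, you need an intermediate convexity step (Jensen applied to the mixture over the hidden coordinates) before you can invoke the single-species lemma on the conditional law of $X_\alpha$. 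You flag this as ``the convexity step used in the single-species lemma'', but it is worth making that refinement of the conditioning explicit, since it is the only place the argument is not routine. The paper's variational proof sidesteps this entirely.
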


From now on, let $\bar f=(\bar f_1,\ldots,\bar f_n)$ be the solution of the aggregation-diffusion system \eqref{aggregation-diffusion} satisfying $\bar f_\alpha\in L^\infty(0,T;L^1\cap L^\infty(\R^d))\cap L^2(0,T;H^1(\R^d))$ for $\alpha=1,\ldots,n$, and let $\t f_{\eps}=(\t f_{1,\eps},\ldots,\t f_{n,\eps})$ be the solution of the intermediate PDE system \eqref{intermediate} satisfying the uniform-in-$\eps$ bound  \eqref{uniform in eps}.

The first proposition (Proposition \ref{RE convergence}) shows a mean-field type estimate between the joint distribution of the particle system \eqref{X}, which is the  solution of the Liouville equation \eqref{Liouville}, and the solution of the tensorised PDE at the intermediate level \eqref{tensor intermediate}. The main estimate \eqref{H bound} in this proposition is in the sense of the relative entropy defined in Definition \ref{def RE}.

\begin{proposition}[Mean-field estimate]\label{RE convergence}
Under the same assumptions as in Theorem \ref{POC},
the relative entropy between the solution of   \eqref{Liouville} and the solution of  \eqref{tensor intermediate} can be controlled in such way that
\begin{equation}\label{H bound}
\sup_{t\in[0,T]}\frac{H\big(f_{N,\eps}(t)|\t{f}_{N,\eps}(t)\big)}{N}\leq \frac{C(T)}{N^{1-\ell(2s+4)-\varrho}},
\end{equation}
 for some $\varrho>0$ arbitrarily  small. In particular, it holds for the marginal distribution and the tensorised solution of the intermediate PDE \eqref{intermediate} that \begin{equation}\label{particle error}
\sup_{t\in[0,T]}\Big\|f_{N,\eps}^{(\mathbf{K})}-\prod_{\alpha=1}^n\t f_{\alpha,\eps}^{\otimes K_\alpha}\Big\|_{L^1(\R^{d|\mathbf{K}|})}\leq \frac{C(T)}{N^{1/2-\ell(s+2)-\varrho/2}},
\end{equation}
where the constant $C$ also depends on $\max_\alpha K_\alpha$. 
\end{proposition}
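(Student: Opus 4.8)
The plan is to run a relative-entropy (à la Jabin--Wang) computation at the level of the Liouville equation \eqref{Liouville} against the tensorised intermediate solution \eqref{tensor intermediate}, and to control the resulting error term by the convergence-in-probability estimate that will be established in Section \ref{proof convprob}. First I would write down the time derivative of $H(f_{N,\eps}(t)\,|\,\t f_{N,\eps}(t))$. Since both densities solve linear-in-$f$ Fokker--Planck-type equations with the \emph{same} diffusion $\sum_{\alpha,i}\sigma_\alpha\Delta_{x_{\alpha,i}}$ but drifts
$$b_{N,\eps}^{\alpha,i}=\frac{1}{N}\sum_{\beta,j}\nabla V_{\alpha\beta}^\eps(x_{\alpha,i}-x_{\beta,j}),\qquad \t b_{\eps}^{\alpha,i}=\sum_{\beta}\nabla V_{\alpha\beta}^\eps\ast\t f_{\beta,\eps}(x_{\alpha,i}),$$
the standard computation gives, after using the parabolic dissipation to absorb half of a cross term,
$$\frac{\d}{\d t}H(f_{N,\eps}\,|\,\t f_{N,\eps})\le -\sum_{\alpha,i}\frac{\sigma_\alpha}{2}\int f_{N,\eps}\Big|\nabla_{x_{\alpha,i}}\log\frac{f_{N,\eps}}{\t f_{N,\eps}}\Big|^2+\;C\sum_{\alpha,i}\int f_{N,\eps}\,\big|b_{N,\eps}^{\alpha,i}-\t b_{\eps}^{\alpha,i}\big|^2.$$
So the entire matter reduces to bounding the \emph{drift discrepancy}
$$\mathcal R_N(t):=\sum_{\alpha=1}^n\sum_{i=1}^N\int f_{N,\eps}(t)\Big|\frac1N\sum_{\beta,j}\nabla V_{\alpha\beta}^\eps(x_{\alpha,i}-x_{\beta,j})-\sum_\beta \nabla V_{\alpha\beta}^\eps\ast\t f_{\beta,\eps}(t,x_{\alpha,i})\Big|^2,$$
and then closing by Grönwall on $H$.

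The second and central step is to estimate $\mathcal R_N$. Rewriting the inner difference as $\sum_\beta\big(\frac1N\sum_j G_{\alpha\beta}(x_{\alpha,i}-x_{\beta,j})-G_{\alpha\beta}\ast\t f_{\beta,\eps}(x_{\alpha,i})\big)$ with $G_{\alpha\beta}=\nabla V_{\alpha\beta}^\eps=a_{\alpha\beta}\,\nabla(V\ast\chi^\eps)$, this is exactly a law-of-large-numbers fluctuation of the regularised kernel evaluated along the true particle trajectories. Under the scaling $\eps=N^{-\ell}$ one has the crude Lipschitz/boundedness bounds $\|\nabla V_{\alpha\beta}^\eps\|_{L^\infty}\lesssim \eps^{-(s+1)}=N^{\ell(s+1)}$ and $\|\nabla^2 V_{\alpha\beta}^\eps\|_{L^\infty}\lesssim \eps^{-(s+2)}=N^{\ell(s+2)}$ (from $V(x)=|x|^{-s}$ and the mollifier bounds in (H5)), so the fluctuation can be controlled by the convergence-in-probability statement of Section \ref{proof convprob}. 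Concretely I would split $\Omega$ into a good event, on which the regularised empirical error is $\le N^{-\kappa}$ for $\kappa$ as large as we wish (the ``arbitrary algebraic rate'' referred to in the introduction), and its complement, whose probability is superpolynomially (or at least fast algebraically) small; on the bad event one uses the deterministic $L^\infty$ bound $|\,b_{N,\eps}-\t b_\eps\,|\lesssim N^{\ell(s+1)}$ together with the small probability to make the contribution negligible. Summing over the $nN$ particles and integrating in time yields $\int_0^T\mathcal R_N\lesssim N\cdot N^{-(1-\ell(2s+4)-\varrho)}$ — the exponent $1-\ell(2s+4)$ comes from pairing the $\eps^{-(s+2)}$-type loss (squared, because $\mathcal R_N$ is quadratic, this is where a factor $\eps^{-2(s+2)}=N^{2\ell(s+2)}=N^{\ell(2s+4)}$ appears) against the $N^{-1}$ gain from the law of large numbers — and $\varrho>0$ absorbs logarithmic factors and the slack in the stopping-time/concentration argument. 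Dividing by $N$ and applying Grönwall gives \eqref{H bound}.

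The final step is routine: \eqref{particle error} follows from \eqref{H bound} by first applying the sub-additivity Lemma \ref{subadditivity} to pass from the full joint law to the $\mathbf K$-marginal, at the cost of the factor $\max_\alpha K_\alpha$, and then the Csiszár--Kullback--Pinsker inequality \eqref{CKP}, which turns the entropy bound $H(f_{N,\eps}^{(\mathbf K)}\,|\,\prod_\alpha\t f_{\alpha,\eps}^{\otimes K_\alpha})\lesssim C(T)\,N^{-(1-\ell(2s+4)-\varrho)}$ into the $L^1$ bound with half the exponent, i.e. $N^{-(1/2-\ell(s+2)-\varrho/2)}$. I expect the main obstacle to be the second step: making the fluctuation estimate for $\mathcal R_N$ work \emph{uniformly in time and globally on $\R^d$} for kernels that are only $\eps$-regularised Coulomb (hence with $\eps$-dependent, $N$-growing Lipschitz constants), which is precisely why the convergence-in-probability result with an arbitrarily large algebraic rate — rather than a fixed-moment estimate — is needed, and why the admissible range of $\ell$ in \eqref{range l} is forced to be small; one must also be careful that the constants produced by Grönwall depend only on $T$ and the uniform-in-$\eps$ norms \eqref{uniform in eps}, not on $N$.
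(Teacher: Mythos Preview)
Your overall architecture is correct and matches the paper's: derive the entropy inequality \eqref{relative entropy bound}, bound the drift discrepancy $\mathcal R_N$, then apply Lemma \ref{subadditivity} and \eqref{CKP}. But your description of Step 2 skips the mechanism that actually makes the estimate work, and misidentifies what the ``good event'' is.

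The drift discrepancy is \emph{not} a law-of-large-numbers fluctuation along the true trajectories $X^\eps_{\alpha,i}$: those particles are correlated, so Lemma \ref{LLN} does not apply to them. The paper inserts the independent McKean--Vlasov particles $\t X^\eps_{\alpha,i}$ from \eqref{tildeXi} and splits the expectation into $J_1+J_2+J_3$, where $J_2$ is the genuine LLN term (at the level of the independent $\t X$'s), while $J_1$ and $J_3$ measure the cost of transferring from $X$ to $\t X$. The convergence-in-probability statement of Proposition \ref{convergence in prob} is not ``the regularised empirical error is $\le N^{-\kappa}$'' on a good event; it is $\max_{\alpha,i}|X^\eps_{\alpha,i}-\t X^\eps_{\alpha,i}|<N^{-\lambda}$ on $\mathcal C_\lambda^c$, with $\ell<\lambda<\tfrac12-\ell(s+1)$. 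On $\mathcal C_\lambda^c$ one then uses Lipschitz-type bounds (via the auxiliary function $K^\eps$ in \eqref{auxiliary} for $J_3$, and $\|\nabla^2 V^\eps_{\alpha\beta}\ast\t f_{\beta,\eps}\|_{L^\infty}$ for $J_1$), and on $\mathcal C_\lambda$ the crude bound $\|\nabla V^\eps\|_{L^\infty}\lesssim N^{\ell(s+1)}$ times the small probability.

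Your exponent bookkeeping also arrives at the right answer for a slightly wrong reason. The leading contribution is $J_1\lesssim \|\nabla^2 V^\eps_{\alpha\beta}\ast\t f_{\beta,\eps}\|_{L^\infty}^2\,N^{-2\lambda}$; the convolution with $\t f$ gains back most of the singularity, so this norm is $\lesssim N^{\ell}$ (Coulomb case, see \eqref{hessian}), not $N^{\ell(s+2)}$. The rate $N^{-(1-\ell(2s+4))}$ then emerges from $N^{2\ell-2\lambda}$ with $\lambda$ pushed to its upper limit $\tfrac12-\ell(s+1)$, which is a constraint inherited from Proposition \ref{convergence in prob}, not from squaring $\eps^{-(s+2)}$ against a bare $N^{-1}$.
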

We remark that estimate \eqref{particle error} can be seen as a direct sequence of Lemma \ref{subadditivity}, Csiszár-Kullback-Pinsker inequality \eqref{CKP} and estimate \eqref{H bound}. The proof of Proposition \ref{RE convergence}, especially the bound \eqref{H bound}, will be given in Section \ref{mean-field limit}.

Evaluating the relative entropy for interacting particle systems  can be seen in many previous works, for instance \cite{JW17,JW18,chen2023quantitative, carrillo2024relative}. 
In our case we do it for multi-species systems in order to derive the following bound
\begin{equation}\label{relative entropy bound}
\frac{\,\d}{\,\d t}\frac{H(f_{N,\eps}|\t{f}_{N,\eps})}{N}\lesssim\E\bigg[\frac{1}{N}\sum_{\alpha,\beta=1}^n\sum_{i=1}^N\Big|\nabla V_{\alpha\beta}^\eps\ast \t{f}_{\beta,\eps}(X^\eps_{\alpha,i})-\frac{1}{N}\sum_{j=1}^N\nabla V_{\alpha\beta}^\eps(X^\eps_{\alpha,i}-X^\eps_{\beta,j})\Big|^2\bigg],
\end{equation}
which shows clearly how the cross-interactions influence the structure of the relative entropy estimates compared to \cite{JW18,chen2023quantitative}. 

In order to estimate the expectation on the right-hand side of \eqref{relative entropy bound}, we will use $N$-copies version of \eqref{tildeX} in Section \ref{mean-field limit}. More precisely, we construct the following intermediate SDE with the same initial data and Brownian motion as in \eqref{X} satisfying assumption (H6) and (H7): 
\begin{equation}\label{tildeXi}
 \begin{cases}
    \displaystyle\,\d \t X_{\alpha,i}^{\eps}(t) = -\sum_{\beta=1}^{n} \nabla V_{\alpha\beta}^\eps\ast \t f_{\beta,\eps}(t, \t X_{\alpha,i}^{\eps}(t)) \,\d t + \sqrt{2\sigma_{\alpha}} \,\d B_{\alpha,i}(t)\\
    \law(\t X_{\alpha,i}^\eps(t))=\t f_{\alpha,\eps}\vspace*{0.2cm}\\
    \displaystyle X_{\alpha,i}^{\eps}(0) = Z_{\alpha,i},
\end{cases}  
\end{equation}
for $i=1,2,\ldots,N$ and $\alpha=1,2,\ldots,n$.
Notice that $\t X^\eps_{\alpha,i}$ and $\t X^\eps_{\beta,j}$ are independent if either $i\neq j$ or $\alpha\neq \beta$.

We next plug in terms into \eqref{relative entropy bound} concerning the intermediate SDE \eqref{tildeXi} satisfied by $\t X_{\alpha,i}^\eps$, which then can be bounded by three terms as follows:
$$
\begin{aligned}
& \E\bigg[\frac{1}{N}\sum_{\alpha,\beta=1}^n\sum_{i=1}^N\big|\nabla V_{\alpha\beta}^\eps\ast \t{f}_{\beta,\eps}\big(X_{\alpha,i}^\eps\big)-\frac{1}{N}\sum_{j=1}^N\nabla V_{\alpha\beta}^\eps\big(X_{\alpha,i}^\eps-X_{\beta,j}^\eps\big)\big|^2\bigg]\\
\lesssim&\, \E\bigg[\frac{1}{N}\sum_{\alpha,\beta=1}^n\sum_{i=1}^N\big|\nabla V_{\alpha\beta}^\eps\ast \t{f}_{\beta,\eps}\big(X_{\alpha,i}^\eps\big)-\nabla V_{\alpha\beta}^\eps\ast \t{f}_{\beta,\eps}\big(\t{X}_{\alpha,i}^\eps\big)\big|^2\bigg]\\
&+ \E\bigg[\frac{1}{N}\sum_{\alpha,\beta=1}^n\sum_{i=1}^N\big|\nabla V_{\alpha\beta}^\eps\ast \t{f}_{\beta,\eps}\big(\t{X}_{\alpha,i}^\eps\big)-\frac{1}{N}\sum_{j=1}^N\nabla V_{\alpha\beta}^\eps\big(\t{X}_{\alpha,i}^\eps-\t{X}_{\beta,j}^\eps(t)\big)\big|^2\bigg]\\
&+ \E\bigg[\frac{1}{N}\sum_{\alpha,\beta=1}^n\sum_{i=1}^N\big|\frac{1}{N}\sum_{j=1}^N\nabla V_{\alpha\beta}^\eps\big(\t{X}_{\alpha,i}^\eps-\t{X}_{\beta,j}^\eps(t)\big)-\frac{1}{N}\sum_{j=1}^N\nabla V_{\alpha\beta}^\eps\big(X_{\alpha,i}^\eps-X_{\beta,j}^\eps\big)\big|^2\bigg]\\
=:& J_1+J_2+J_3.
\end{aligned}
$$
Term $J_2$ only depends on the intermediate SDE \eqref{tildeXi}, and can be controlled by the law of large numbers Lemma \ref{LLN}, which is a generalisation of  the law of large numbers estimate in \cite{chen2024fluctuations} to the multi-species case. 
To deal with $J_1$ and $J_3$, we will show the following quantitative error estimate in probability between the particle system \eqref{X} satisfied by  $X_{\alpha,i}^\eps$ and the intermediate SDE \eqref{tildeXi} satisfied by $\t X_{\alpha,i}^\eps$, for some suitable $\lambda$ and $\gamma$
\begin{equation}\label{convergence in probability}
\sup_{t\in[0,T]}\P\big(\max_{\alpha=1,\ldots,n}\max_{i=1,\ldots,N}|\t{X}_{\alpha,i}^\eps(t)-X_{\alpha,i}^\eps(t)\big|\geq N^{-\lambda}\big)\leq C(T,\gamma) N^{-\gamma}.
\end{equation}
In order to prove this convergence in probability (Proposition \ref{convergence in prob}), we follow  \cite{chen2024fluctuations} and generalise it to the multi-species case. Moreover, a careful adaptation has to be made to allow attractive Coulomb  interaction kernels (i.e. $s=d-2$) without further cut-off. The technique for Coulomb potentials, which is inspired by papers \cite{huang2019learning,HLP20},  is to construct an integrable auxiliary function as
$$
K^\eps(x)=\begin{cases}\displaystyle
    \frac{1}{|x|^{ {s+2}}}\quad |x|\geq 4\eps, \vspace{0.1cm}\\
    \displaystyle\frac{1}{(4\eps)^{s+2}}\quad |x|< 4\eps,
    \end{cases}
$$in order to avoid using Taylor's expansion directly. But unlike \cite{huang2019learning,HLP20}, the convergence in probability \eqref{convergence in probability} will be proved by a quite different approach,  namely the stopping-time argument developed in \cite{chen2024fluctuations}. More details can be found in Section \ref{proof convprob}. Besides, by introducing this auxiliary function, we can simplify some proofs in \cite{chen2024fluctuations}, and deal with sub-Coulomb and Coulomb potential in a unified way. 
Thus, by combining the estimates of $J_1$, $J_2$ and $J_3$, the evolution of the relative entropy between the particle system \eqref{X} and the intermediate PDE \eqref{intermediate} can be controlled. 

The second proposition (Proposition \ref{second step}  below) states a PDE error estimate in terms of the relative entropy and $L^2$-distance between the intermediate PDE system \eqref{intermediate} and the limiting PDE system \eqref{aggregation-diffusion}.
\begin{proposition}[PDE error estimate]\label{second step}
Under the same assumptions as in Theorem \ref{POC}, the relative entropy between the solution of \eqref{aggregation-diffusion} and \eqref{intermediate} can be estimated such that for any $\eps>0$,
\begin{equation}\label{PDE error}\sup_{t\in[0,T]}\Big(\|\t f_{\alpha,\eps}-\bar f_{\alpha}\|_{L^2}^2+H(\t f_{\alpha,\eps}|\bar f_{\alpha})\Big)\leq C\eps^2,\quad 
\alpha=1,2,\ldots,n,
\end{equation}
where the constant $C$ does not depend on $\eps$. 
In particular, it holds that
\begin{equation}\label{tensor PDE error}
\sup_{t\in[0,T]}\Big\|\prod_{\alpha=1}^n\t f_{\alpha,\eps}^{\otimes K_\alpha}-\prod_{\alpha=1}^n\bar f_{\alpha}^{\otimes K_\alpha}\Big\|_{L^1(\R^{d|\mathbf{K}|})}\leq C\eps,\quad 
\alpha=1,2,\ldots,n,
\end{equation}
where the constant $C$ depends on $|\mathbf{K}|$.
\end{proposition}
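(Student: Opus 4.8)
The plan is to prove the two error estimates \eqref{PDE error} and \eqref{tensor PDE error} by a stability/energy argument comparing the intermediate PDE system \eqref{intermediate} with the limiting system \eqref{aggregation-diffusion}. Both solutions share the same initial data $\bar f_\alpha^0$, so at $t=0$ the difference $w_\alpha := \t f_{\alpha,\eps} - \bar f_\alpha$ vanishes, and the whole task is to control the growth in time of a suitable combined functional. The natural candidate, suggested by the statement itself, is $\Phi(t) := \sum_{\alpha=1}^n \big( \tfrac12\|w_\alpha\|_{L^2}^2 + H(\t f_{\alpha,\eps}|\bar f_\alpha) \big)$. The reason to carry both pieces simultaneously is the usual one for aggregation-diffusion comparisons: the relative entropy gives good control of the drift terms through the dissipation $\sigma_\alpha \int |\nabla \sqrt{\t f_{\alpha,\eps}/\bar f_\alpha}|^2$-type quantities (or, after manipulation, $\sigma_\alpha \int \t f_{\alpha,\eps}|\nabla\log(\t f_{\alpha,\eps}/\bar f_\alpha)|^2$), while the $L^2$-norm and its dissipation $\sigma_\alpha\|\nabla w_\alpha\|_{L^2}^2$ are what one needs to absorb the terms coming from the convolution structure and the mismatch between $V_{\alpha\beta}^\eps$ and $V_{\alpha\beta}$. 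The $\eps^2$ rate must enter precisely through this last mismatch: $V_{\alpha\beta}^\eps = \chi^\eps \ast V_{\alpha\beta}$, so $\nabla V_{\alpha\beta}^\eps - \nabla V_{\alpha\beta} = \chi^\eps\ast\nabla V_{\alpha\beta} - \nabla V_{\alpha\beta}$, and one expects $\|\nabla V_{\alpha\beta}^\eps \ast g - \nabla V_{\alpha\beta}\ast g\|$ in a suitable norm to be $O(\eps)$ (or $O(\eps^2)$ at the level of the squared functional) whenever $g$ has enough integrability/regularity, which is exactly what the uniform bound \eqref{uniform in eps} and the limiting regularity $\bar f_\alpha \in L^\infty L^1\cap L^\infty \cap L^2 H^1$ provide.

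Concretely, I would differentiate $\Phi$ in time, writing the equation for $w_\alpha$ as $\p_t w_\alpha = \sigma_\alpha\Delta w_\alpha + \sum_\beta \div(\t f_{\alpha,\eps}\nabla V_{\alpha\beta}^\eps\ast\t f_{\beta,\eps} - \bar f_\alpha \nabla V_{\alpha\beta}\ast\bar f_\beta)$, and similarly for the relative entropy using the standard computation $\tfrac{d}{dt}H(\t f_{\alpha,\eps}|\bar f_\alpha) = -\sigma_\alpha\int \t f_{\alpha,\eps}|\nabla\log\tfrac{\t f_{\alpha,\eps}}{\bar f_\alpha}|^2 + (\text{drift terms})$. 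The key algebraic step is to split the flux difference as
$$
\t f_{\alpha,\eps}\nabla V_{\alpha\beta}^\eps\ast\t f_{\beta,\eps} - \bar f_\alpha \nabla V_{\alpha\beta}\ast\bar f_\beta = w_\alpha\,\nabla V_{\alpha\beta}^\eps\ast\t f_{\beta,\eps} + \bar f_\alpha\,\nabla V_{\alpha\beta}^\eps\ast w_\beta + \bar f_\alpha\,(\nabla V_{\alpha\beta}^\eps - \nabla V_{\alpha\beta})\ast\bar f_\beta,
$$
so that the first two terms are "stability" terms bilinear in $w$ (to be absorbed by the diffusion dissipations after integration by parts and use of the $L^p$-smoothing of the Riesz kernel $V=|x|^{-s}$ with $0<s\le d-2$, via Hardy–Littlewood–Sobolev and Gagliardo–Nirenberg interpolation), while the third is the genuinely small consistency term, estimated by $\|\bar f_\alpha\|\cdot\|(\nabla V_{\alpha\beta}^\eps - \nabla V_{\alpha\beta})\ast\bar f_\beta\|\lesssim \eps$. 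Collecting everything should yield a differential inequality $\tfrac{d}{dt}\Phi(t) \le C\Phi(t) + C\eps^2$ with $\Phi(0)=0$, after which Grönwall gives $\Phi(t)\le C\eps^2$ on $[0,T]$; this is \eqref{PDE error}. Then \eqref{tensor PDE error} follows by a telescoping decomposition of the tensor product $\prod\t f_{\alpha,\eps}^{\otimes K_\alpha} - \prod\bar f_\alpha^{\otimes K_\alpha}$ into $|\mathbf{K}|$ terms each of which contains one factor $(\t f_{\alpha,\eps}-\bar f_\alpha)$ and the rest bounded $L^1$ densities, combined with either the $L^1$ bound $\|w_\alpha\|_{L^1}$ obtained from the relative entropy via Csiszár–Kullback–Pinsker \eqref{CKP} (giving $\|w_\alpha\|_{L^1}\le\sqrt{2H(\t f_{\alpha,\eps}|\bar f_\alpha)}\le C\eps$), noting that the $L^\infty$ bounds on all densities make the tensor factors uniformly controlled.

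The main obstacle I anticipate is making the absorption of the stability terms $\int \nabla w_\alpha\cdot w_\alpha\,\nabla V_{\alpha\beta}^\eps\ast\t f_{\beta,\eps}$ and $\int \nabla w_\alpha\cdot\bar f_\alpha\,\nabla V_{\alpha\beta}^\eps\ast w_\beta$ genuinely uniform in $\eps$, since $\nabla V_{\alpha\beta}^\eps$ has a (mollified) Newtonian singularity when $s=d-2$ and one must not let constants blow up as $\eps\to0$. The way around this is to never estimate $\nabla V_{\alpha\beta}^\eps$ pointwise but always keep it inside a convolution: $\|\nabla V_{\alpha\beta}^\eps\ast g\|_{L^q}\le \|\nabla V_{\alpha\beta}\ast g\|_{L^q} \le C\|g\|_{L^p}$ uniformly in $\eps$ by Hardy–Littlewood–Sobolev (the kernel $\nabla|x|^{-s}$ is of order $-(s+1)\ge -(d-1)$, and since $s\le d-2$ one has $s+1\le d-1<d$, so HLS applies with an admissible exponent pair), then close the estimate by interpolating $\|w_\alpha\|_{L^p}$ and $\|w_\beta\|_{L^p}$ between $L^2$ and $\dot H^1$ using Gagliardo–Nirenberg in dimension $d\ge3$, paying the $\dot H^1$-norms against the diffusion dissipations with a small constant. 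A secondary delicate point is the relative-entropy drift terms, which involve $\nabla\log(\t f_{\alpha,\eps}/\bar f_\alpha)$ against the same flux differences; here one uses that $\t f_{\alpha,\eps}$ and $\bar f_\alpha$ are both bounded above, rewrites $\t f_{\alpha,\eps}\nabla\log\tfrac{\t f_{\alpha,\eps}}{\bar f_\alpha}$ appropriately, and again hides the resulting high-order term in the Fisher-information-type dissipation — this is the step most sensitive to the precise form of the functional $\Phi$ and may require tuning the relative weights of the $L^2$ and entropy pieces, or adding a small multiple of $\|\nabla\sqrt{\t f_{\alpha,\eps}}\|^2$, to make the signs work out.
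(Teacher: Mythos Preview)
Your overall strategy---combining the $L^2$-distance and the relative entropy into a single functional, decomposing the flux difference into two stability terms plus one $O(\eps)$ consistency term, and closing via Gr\"onwall---is exactly the paper's approach, and your treatment of the tensorised estimate \eqref{tensor PDE error} via telescoping and Csisz\'ar--Kullback--Pinsker is also what the paper does.

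Where your plan diverges from the paper, and where it has a gap, is in the estimation of the stability term $\int \bar f_\alpha\,\nabla w_\alpha\cdot\nabla V^\eps_{\alpha\beta}\ast w_\beta$. You propose to bound $\|\nabla V^\eps\ast w_\beta\|_{L^q}$ by Hardy--Littlewood--Sobolev and then interpolate the resulting $\|w_\beta\|_{L^p}$ between $L^2$ and $\dot H^1$. But for the Coulomb case $s=d-2$ (and more generally whenever $s+1>d/2$) the HLS exponent satisfies $p=\tfrac{2d}{2d-(s+1)}<2$, and no Gagliardo--Nirenberg interpolation between $L^2$ and $\dot H^1\hookrightarrow L^{2d/(d-2)}$ reaches exponents below $2$. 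So the $\dot H^1$ absorption you describe does not close. The paper instead splits the kernel into a near-field part $\nabla V|_{B_1}\in L^1$ and a far-field part $\nabla V|_{B_1^c}\in L^\infty$; the near field gives $\|\nabla V|_{B_1}\ast w_\beta\|_{L^2}\le C\|w_\beta\|_{L^2}$ by Young, while the far field unavoidably produces $\|w_\beta\|_{L^1}$. This is precisely why the relative entropy is indispensable: not for its Fisher-information dissipation (which is only used in the standard Cauchy--Schwarz step inside the entropy evolution), but because CKP converts $\|w_\beta\|_{L^1}^2$ into $H(\t f_{\beta,\eps}|\bar f_\beta)$, coupling the $L^2$ estimate back to the entropy. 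The same near/far split and the same $\|h_\beta\|_{L^2}^2+\|h_\beta\|_{L^1}^2$ structure appear symmetrically in the entropy evolution, so the two pieces of $\Phi$ genuinely feed each other. Similarly, for the consistency term at $s=d-2$ the paper uses a smooth cutoff $V=V_{in}+V_{out}$ with $\nabla V_{in}\in L^1$, $\nabla V_{out}\in L^2$, and estimates $\|V^\eps_{in}-V_{in}\|_{L^1}+\|V^\eps_{out}-V_{out}\|_{L^2}\lesssim\eps$; this is where the $L^2(0,T;H^1)$ regularity of $\bar f_\beta$ and $\t f_{\beta,\eps}$ is actually spent.
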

Estimate \eqref{tensor PDE error} can be seen as a lifted version of estimate \eqref{PDE error} between the tensorised solutions of $\t f_\eps$ and $\bar f$ on $\R^{d|\mathbf{K}|}$, which can be obtained by   Csiszár-Kullback-Pinsker inequality \eqref{CKP}. The main idea of showing \eqref{PDE error} is to combine the evolution of the $L^2$ distance between $\t f_\eps$ and $\bar f$ with the evolution of the relative entropy at the PDE level, which is needed to close the relative entropy estimate. By combining these two distances, we can keep the assumptions on the initial conditions lower than in previous works (see for instance \cite{chen2023quantitative,chen2024fluctuations}).

The proof of Theorem \ref{POC} follows by combining  Proposition \ref{RE convergence} and Proposition \ref{second step}. Notice that our approach allows us to get an algebraic instead of a logarithmic connection between the  regularisation parameter $\eps$ and the number of particles $N$, which eventually gives us the algebraic convergence rate towards the system of PDEs with singular kernels \eqref{aggregation-diffusion}.



Finally, for the completeness of our analysis,  we establish the global-in-time well-posedness of the aggregation-diffusion system \eqref{aggregation-diffusion}, as expected, under smallness conditions on the initial data \eqref{smallness assumption}, see Theorem \ref{well-possedness} below. It shows that the required PDE conditions in Theorem \ref{POC} can be fulfilled under some sufficient assumptions on the initial data. 

\begin{theorem}[Global well-posedness of \eqref{aggregation-diffusion}]\label{well-possedness}
Let assumptions $(H1)$-$(H4)$ hold, and assume the following smallness condition on the initial data such that
\begin{equation}\label{smallness assumption}
\sum_{\beta=1}^{n} |a_{\alpha\beta}|\|\bar f_\beta^0\|_{L^{d+1}}^{\frac{2s(d+1)}{d^2}}\leq \frac{4\sigma_\alpha^2}{(d+1)^2C_{HLS}^2C_{GNS}^2\sum_{\beta=1}^n |a_{\alpha\beta}|},\quad \forall \alpha=1,2,\ldots n,  
\end{equation}
where the constants $C_{HLS}$ and $C_{GNS}$ come from the Hardy-Littlewood-Sobolev inequality and the Gagliardo–Nirenberg-Sobolev inequality. Then there exists a unique weak solution $\bar f=(\bar f_1,\ldots,\bar f_n)$ with
\begin{equation}\label{regularity}
\bar f_\alpha \in L^\infty(0,T;L^1\cap L^\infty(\R^d))\cap L^2(0,T;H^1(\R^d)),\quad 
\alpha=1,2,\ldots,n,
\end{equation} which satisfies \eqref{aggregation-diffusion} in the weak sense, i.e., for any $\vphi\in C_b^2(\R^d)$ and any $T>0$, 
\begin{equation}\label{distributional}
\begin{aligned}
\int_{\R^d}\bar f_{\alpha}(T)&\vphi\,\d x  =  \int_{\R^d}\bar f_{\alpha}^0\vphi\,\d x+\sigma_{\alpha}\int_0^T\int_{\R^d}\Delta\vphi  \bar f_{\alpha} \,\d x\,\d t \\&- \sum_{\beta =1}^n a_{\alpha\beta} \int_0^T\int_{\R^d}\nabla\vphi\cdot\bar f_{\alpha} \big( \nabla V \ast \bar f_{\beta}\big)\,\d x\,\d t,\quad 
\alpha=1,2,\ldots,n.   
\end{aligned}
\end{equation}

\end{theorem}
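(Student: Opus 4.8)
\textbf{Proof strategy for Theorem \ref{well-possedness}.}
The plan is to construct the solution via a regularised approximation scheme and pass to the limit using a priori bounds that are quantified precisely by the smallness condition \eqref{smallness assumption}. First I would set up an approximate system: replace the singular kernel $V(x)=1/|x|^s$ by a mollified kernel $V^\delta = V\ast\chi^\delta$ (or equivalently truncate near the origin), and solve the regularised multi-species system for $\bar f_\alpha^\delta$ by a fixed-point argument in $L^\infty(0,T;L^1\cap L^\infty)$, using the smoothing properties of the heat semigroup and the boundedness of $\nabla V^\delta$; this existence step is standard parabolic theory, so I would not dwell on it. The nonnegativity $\bar f_\alpha^\delta\ge 0$ and the conservation of mass $\|\bar f_\alpha^\delta(t)\|_{L^1}=\|\bar f_\alpha^0\|_{L^1}$ follow from the divergence structure of \eqref{aggregation-diffusion} and a standard argument on the sign of solutions to drift-diffusion equations.

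The heart of the proof is the $\delta$-uniform a priori estimate. I would test the equation for $\bar f_\alpha^\delta$ against $p(\bar f_\alpha^\delta)^{p-1}$ for $p\ge 1$ to get, after integrating the diffusion term by parts,
\begin{equation*}
\frac{\d}{\d t}\|\bar f_\alpha^\delta\|_{L^p}^p + \frac{4\sigma_\alpha(p-1)}{p}\|\nabla (\bar f_\alpha^\delta)^{p/2}\|_{L^2}^2 = (p-1)\sum_{\beta=1}^n a_{\alpha\beta}\int_{\R^d}(\bar f_\alpha^\delta)^p\,(\Delta V^\delta\ast \bar f_\beta^\delta)\,\d x,
\end{equation*}
where I used $\div(\nabla V^\delta\ast\bar f_\beta^\delta)=\Delta V^\delta\ast\bar f_\beta^\delta$ and the fact that $\Delta V$ is (up to constant) a Riesz potential of order $s+2$, hence for $s\le d-2$ it is controlled via the Hardy--Littlewood--Sobolev inequality by $\|\bar f_\beta^\delta\|_{L^{q}}$ for an appropriate $q$. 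Bounding the right-hand side by Hölder, then HLS, then the Gagliardo--Nirenberg--Sobolev interpolation to absorb part of the resulting norm into the dissipation term $\|\nabla(\bar f_\alpha^\delta)^{p/2}\|_{L^2}^2$, produces precisely a coefficient of the form $\sum_\beta |a_{\alpha\beta}|\,C_{HLS}C_{GNS}\|\bar f_\beta^0\|_{L^{d+1}}^{\theta}$ times the dissipation; the smallness condition \eqref{smallness assumption} is exactly what guarantees this coefficient is $<1$ so the dissipation dominates. I would first run this for $p$ chosen to control the critical $L^{d+1}$-type norm (using mass conservation to handle the low-integrability endpoint), obtaining a closed differential inequality that keeps $\|\bar f_\alpha^\delta(t)\|_{L^{d+1}}$ bounded for all time, and then bootstrap to all $p\le\infty$ by a Moser-type iteration or by re-running the estimate with the now-controlled critical norm, yielding the uniform $L^\infty(0,T;L^1\cap L^\infty)$ bound; the dissipation terms then also give the uniform $L^2(0,T;H^1)$ bound once the $L^\infty$ bound is in hand.

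With these $\delta$-uniform bounds, I would pass to the limit $\delta\to 0$: the $L^2(0,T;H^1)$ bound plus the equation give an $L^2(0,T;H^{-1})$ (or $L^{4/3}$-in-time) bound on $\p_t\bar f_\alpha^\delta$, so by Aubin--Lions $\bar f_\alpha^\delta\to\bar f_\alpha$ strongly in $L^2_{loc}$ (and, with mass control plus $L^\infty$, in $L^p$ for $p<\infty$) along a subsequence. The only delicate term in passing to the limit in the weak formulation \eqref{distributional} is the nonlinear drift $\bar f_\alpha^\delta(\nabla V^\delta\ast\bar f_\beta^\delta)$: here I would use that $\nabla V^\delta\ast\bar f_\beta^\delta\to\nabla V\ast\bar f_\beta$ strongly in $L^2_{loc}$ (since $\nabla V$ maps $L^1\cap L^\infty$ into $L^2_{loc}$ by HLS, uniformly, and the mollification converges), combined with strong $L^2_{loc}$ convergence of $\bar f_\alpha^\delta$ and the uniform $L^\infty$ bound to justify the product passing to the limit after testing against $\nabla\vphi\in C_c$. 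Finally, uniqueness: I would take two solutions $\bar f,\bar g$ with the regularity \eqref{regularity}, write the equation for the difference, and estimate $\sum_\alpha\|\bar f_\alpha-\bar g_\alpha\|_{L^2}^2$ (or an $L^1$/entropy-type distance), using again HLS and GNS together with the smallness condition to absorb the bad terms into the dissipation and close a Grönwall inequality. The main obstacle is the a priori estimate step: making the interpolation chain quantitatively match the explicit constants $C_{HLS}$, $C_{GNS}$ and the exponent $2s(d+1)/d^2$ in \eqref{smallness assumption}, and ensuring the absorption into the dissipation works simultaneously for all species $\alpha$ (so the smallness must hold for each $\alpha$, which is why \eqref{smallness assumption} is stated as a family of conditions); the Moser iteration to upgrade from the critical norm to $L^\infty$ is then the technically heaviest but conceptually routine part.
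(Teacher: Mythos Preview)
Your overall architecture matches the paper's: regularise the kernel, derive uniform $L^p$ bounds under smallness, upgrade to $L^\infty$, extract compactness, pass to the limit, then prove uniqueness. But several steps differ in execution, and one of them (uniqueness) has a genuine gap.

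For the a priori $L^p$ estimate the paper does \emph{not} integrate by parts twice to reach $\Delta V^\delta\ast\bar f_\beta$; it stops after one integration by parts and applies Cauchy--Schwarz with a parameter to $\int(\t f_{\alpha,\eps})^{p-1}\nabla\t f_{\alpha,\eps}\cdot\nabla V^\eps\ast\t f_{\beta,\eps}$, producing $\int(\t f_{\alpha,\eps})^p|\nabla V^\eps\ast\t f_{\beta,\eps}|^2$. This is then handled by H\"older (exponents $d/(d-2)$ and $d/2$), GNS on the first factor, and HLS in the form $\|\nabla V\ast\t f_{\beta,\eps}\|_{L^d}\le C_{HLS}\|\t f_{\beta,\eps}\|_{L^{d/(d-s)}}$ on the second, followed by interpolation against mass. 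This chain, specialised to $p=d+1$, is exactly what produces the constants and exponent in \eqref{smallness assumption}. Your $\Delta V$ route is viable for $s<d-2$ but breaks structurally at the Coulomb endpoint $s=d-2$, where $\Delta V$ is a Dirac rather than a Riesz potential, so ``HLS on $\Delta V$'' is not the right tool and the resulting constants would in any case not match those in the statement. For the $L^\infty$ upgrade the paper does not Moser-iterate; it uses the Duhamel formula and heat-kernel decay together with the established $L^{d+1}$ bound. For compactness the paper does not use Aubin--Lions but rather Ascoli--Arzel\`a in $C([0,T],\M(\R^d))$ via the SDE representation (tightness and equicontinuity), followed by a tailored interpolation to get strong $L^1(0,T;L^1)$ convergence. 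Your alternatives here are standard and would work.

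The substantive gap is uniqueness. A pure $L^2$ estimate on $h_\alpha=\bar g_\alpha-\bar f_\alpha$ does not close on $\R^d$ across the full range $0<s\le d-2$: the long-range piece $\nabla V|_{B_1^c}\ast h_\beta$ forces an $\|h_\beta\|_{L^1}$ term (since $\nabla V|_{B_1^c}\notin L^1$, and for small $s$ not in $L^2$ either), which is not controlled by $\|h_\beta\|_{L^2}$. The paper closes this by evolving, in parallel with $\sum_\alpha\|h_\alpha\|_{L^2}^2$, the relative entropy $\sum_\alpha H(\bar g_\alpha|\bar f_\alpha)$ and invoking Csisz\'ar--Kullback--Pinsker to bound $\|h_\beta\|_{L^1}^2\le 2H(\bar g_\beta|\bar f_\beta)$; the combined functional then satisfies a closed Gr\"onwall inequality. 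Note also that the paper does \emph{not} use the smallness condition in the uniqueness step---only the $L^\infty(0,T;L^1\cap L^\infty)$ regularity of the two solutions---so your plan to ``absorb bad terms into the dissipation via smallness'' is not the mechanism at work here.
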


The proof of Theorem \ref{well-possedness} takes advantage of  the intermediate PDE \eqref{intermediate}, which can be seen as a regularised aggregation-diffusion system. The intermediate PDE satisfies the uniform-in-$\eps$ estimate \eqref{uniform in eps} under the smallness condition as shown in Lemma \ref{uniformeps}. By sending the regularisation parameter $\eps=\eps(N)$ to $0$ when $N\to\infty$, it converges to the original aggregation-diffusion systems in $L^1(0,T;L^1(\R^d))$, which will be shown in details in Subsection \ref{existence subsection}. The uniqueness result can be obtained in a similar way as the proof of Proposition \ref{second step}, where we combine the relative entropy and the $L^2$-distance. 

To conclude this section, we want to remind the reader that the rigorous quantitative propagation of chaos result (Theorem \ref{POC}) holds
as long as the solution $\bar f$ of the  aggregation-diffusion system \eqref{aggregation-diffusion} and the solution $\t f_\eps$ of the intermediate PDE \eqref{intermediate} both lie in $L^\infty(0,T;L^1\cap L^\infty(\R^{dn}))\cap L^2(0,T;H^1(\R^{dn}))$  with the uniform-in-$\eps$ bound \eqref{uniform in eps}. These conditions can be achieved thanks to Theorem \ref{well-possedness}.

\section{Proof of convergence in probability}\label{proof convprob} In this section, we will prove the convergence in probability \eqref{convergence in probability} as mentioned in Section \ref{proof}.  Define a subset of probability space $\Omega$ such as, for some $\lambda>0$, 
\begin{equation}\label{clam}
\C_{\lambda}(t)=\{\omega\in \Omega :\, \max_{\alpha=1,\ldots,n}\max_{i=1,\ldots,N}|\t{X}_{\alpha,i}^\eps(t)-X_{\alpha,i}^\eps(t)\big|\geq N^{-\lambda}\}.    
\end{equation}
We have the following proposition, which shows that for suitable $\lambda$ the probability of $C_\lambda$ is arbitrarily small. In other words, the probability of the extreme event  $|\t{X}_{\alpha,i}^\eps(t)-X_{\alpha,i}^\eps(t)\big|\geq N^{-\lambda}$ is small enough. 

\begin{proposition}\label{convergence in prob}
Let $X_{\alpha,i}$ and $\t{X}_{\alpha,i}$ be strong solutions of \eqref{X} and \eqref{tildeXi}  respectively up to any time $T>0$. Under the assumptions of Theorem \ref{POC}, recall that $\ell$ satisfies the range \eqref{range l} namely
$$
\left\{\begin{array}{cc}
    &\displaystyle0<\ell<\min\left(\frac{1} {C_0},\frac1{2d}\right),\quad\text{when } s=d-2,\\&
 \\&\displaystyle0<\ell<\frac{1}{2s+4},\quad\text{when } s<d-2,
\end{array}\right.
$$where  $C_0$  is a constant such that $$C_0=C_0(T,d,n,\max_{\alpha,\beta}|a_{\alpha\beta}|, \|\chi\|_{W^{2,1}\cap W^{2,\infty}},\max_\alpha\sup_\eps\|\t f_{\eps,\alpha}\|_{L^\infty(0,T;L^1\cap L^\infty)}).$$
For some $\lambda$ satisfying  \begin{equation}\label{range lambda}\ell< \lambda<\frac{1}{2}-\ell (s+1),\end{equation} then it holds for any $\gamma>0$ that    $$
\sup_{t\in[0,T]}\P(\C_{\lambda}(t))=\sup_{t\in[0,T]}\P\big(\max_{\alpha=1,\ldots,n}\max_{i=1,\ldots,N}|\t{X}_{\alpha,i}^\eps(t)-X_{\alpha,i}^\eps(t)\big|\geq N^{-\lambda}\big)\leq C(T,\gamma) N^{-\gamma}.
$$   
\end{proposition}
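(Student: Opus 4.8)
The plan is to control the difference process $e_{\alpha,i}(t):=\t X_{\alpha,i}^\eps(t)-X_{\alpha,i}^\eps(t)$ via a stopping-time argument, following the single-species scheme of \cite{chen2024fluctuations}, adapted to the multi-species setting and to the Coulomb threshold $s=d-2$ by means of the auxiliary kernel $K^\eps$. Since the two SDEs \eqref{X} and \eqref{tildeXi} share the same Brownian motions and initial data, the stochastic terms cancel and $e_{\alpha,i}$ solves an ODE (pathwise) whose right-hand side, after adding and subtracting $\nabla V_{\alpha\beta}^\eps\ast\t f_{\beta,\eps}(\t X_{\alpha,i}^\eps)$ and the empirical version evaluated at the $\t X$'s, splits into three pieces: a local-Lipschitz term $\nabla V_{\alpha\beta}^\eps\ast\t f_{\beta,\eps}(\t X_{\alpha,i}^\eps)-\nabla V_{\alpha\beta}^\eps\ast\t f_{\beta,\eps}(X_{\alpha,i}^\eps)$, which is bounded by $\|\nabla^2 V_{\alpha\beta}^\eps\ast\t f_{\beta,\eps}\|_{L^\infty}|e_{\alpha,i}|\lesssim \eps^{-(s+2)}\max_{\alpha,j}|e_{\alpha,j}|$; a fluctuation term $\nabla V_{\alpha\beta}^\eps\ast\t f_{\beta,\eps}(\t X_{\alpha,i}^\eps)-\frac1N\sum_j\nabla V_{\alpha\beta}^\eps(\t X_{\alpha,i}^\eps-\t X_{\beta,j}^\eps)$ which is a sum of i.i.d.\ mean-zero (conditionally on $\t X_{\alpha,i}^\eps$) terms; and the interaction-consistency term $\frac1N\sum_j[\nabla V_{\alpha\beta}^\eps(\t X_{\alpha,i}^\eps-\t X_{\beta,j}^\eps)-\nabla V_{\alpha\beta}^\eps(X_{\alpha,i}^\eps-X_{\beta,j}^\eps)]$.

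First I would fix a stopping time $\tau_N:=\inf\{t\ge0:\ \max_{\alpha,i}|e_{\alpha,i}(t)|\ge N^{-\lambda}\}$ and work on $[0,t\wedge\tau_N]$, where by definition all particle displacements are $\le N^{-\lambda}$; the point of the constraint $\lambda>\ell$ in \eqref{range lambda} is that $N^{-\lambda}\ll\eps=N^{-\ell}$, so two particles that are within the frozen region $|x|<4\eps$ of $K^\eps$ for the $\t X$'s remain in (a comparable) frozen region for the $X$'s, which lets one replace the genuinely singular local Lipschitz bound $|\nabla^2 V_{\alpha\beta}^\eps(x)|\lesssim|x|^{-(s+2)}$ by the uniform bound $\|K^\eps\|_{L^\infty}\lesssim\eps^{-(s+2)}$ on the relevant event, avoiding Taylor expansion of the singular kernel. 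Concretely, for the third term one writes $\nabla V_{\alpha\beta}^\eps(\t X_{\alpha,i}^\eps-\t X_{\beta,j}^\eps)-\nabla V_{\alpha\beta}^\eps(X_{\alpha,i}^\eps-X_{\beta,j}^\eps)$ using the mean value theorem along the segment joining the two arguments, bounds the Hessian by $C K^\eps$ evaluated at an intermediate point (using that the segment stays, up to constants, in the right region thanks to $N^{-\lambda}\ll\eps$), and then controls $\frac1N\sum_j K^\eps(\t X_{\alpha,i}^\eps-\t X_{\beta,j}^\eps)$ by its expectation $K^\eps\ast\t f_{\beta,\eps}(\t X_{\alpha,i}^\eps)$ plus an i.i.d.\ fluctuation; the expectation is $O(1)$ uniformly in $\eps$ because $K^\eps$ is integrable with an $\eps$-independent $L^1$ bound when $s+2<d$, i.e.\ $s<d-2$, and when $s=d-2$ the $L^1$-norm of $K^\eps$ is only logarithmically divergent, which is where the extra constraint $\ell<1/(2d)$ and the constant $C_0$ enter to absorb the $\log\frac1\eps\sim\ell\log N$ factor. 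Collecting the three contributions and taking the max over $\alpha,i$ yields a Grönwall-type inequality of the form
$$
\max_{\alpha,i}|e_{\alpha,i}(t\wedge\tau_N)|\le \int_0^{t\wedge\tau_N}\Big(C\eps^{-(s+2)}\max_{\alpha,j}|e_{\alpha,j}(r)|+\Phi_N(r)\Big)\,\d r,
$$
where $\Phi_N$ collects the two fluctuation terms $J_1$- and $J_2$-type averages of centered i.i.d.\ quantities.

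The second main step is a high-moment / union-bound estimate on $\sup_{r\le T}\Phi_N(r)$: each fluctuation term is, conditionally on the $\t X_{\alpha,i}^\eps$'s, an average of $N$ independent centered random vectors whose size is controlled by $\|\nabla V_{\alpha\beta}^\eps\|_{L^\infty}\lesssim\eps^{-(s+1)}$ (this is where $\lambda<\tfrac12-\ell(s+1)$ comes from). Using Rosenthal's / Marcinkiewicz–Zygmund inequality at an arbitrarily high even moment $2p$, or the concentration estimates of \cite{chen2024fluctuations}, one gets $\E\big[\sup_{r\le T}|\Phi_N(r)|^{2p}\big]\le C(p,T)\,N^{-p(1-2\ell(s+1))+o(1)}$, and hence by Markov and a union bound over the $nN$ particles, on the complement of a set of probability $\le C(p,T)N^{-\gamma}$ (for any $\gamma$, by taking $p$ large, exploiting the strict inequality $\lambda<\tfrac12-\ell(s+1)$) we have $\int_0^T\Phi_N\le \tfrac12 N^{-\lambda}$. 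On that good event, Grönwall with the $\eps^{-(s+2)}=N^{\ell(s+2)}$ coefficient gives $\max_{\alpha,i}|e_{\alpha,i}(t\wedge\tau_N)|\le \tfrac12 N^{-\lambda}e^{CTN^{\ell(s+2)}}$ — this exponential blow-up is harmless only because it is compared against the definition of $\tau_N$ itself through a bootstrap: choosing the analysis on the event where already $\int_0^T\Phi_N\le\tfrac12 N^{-\lambda}$, one shows that $\tau_N>T$ cannot fail, i.e.\ the strict inequality $\max_{\alpha,i}|e_{\alpha,i}|<N^{-\lambda}$ is propagated (here the local Lipschitz term, being linear in $\max|e|$ with a prefactor that, although large, is multiplied by the tiny $N^{-\lambda}$ bound, still closes because the scheme is really a continuity/boot-strap argument rather than a naive Grönwall estimate — this is precisely the mechanism of \cite{chen2024fluctuations}, and $C_0$ in \eqref{range l} is exactly the constant that makes $N^{-\lambda}e^{CTN^{\ell(s+2)}}$ or the analogous quantity stay below $N^{-\lambda}$ once $\ell<1/C_0$). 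Consequently $\{\tau_N\le T\}\subset\{\int_0^T\Phi_N>\tfrac12N^{-\lambda}\}$, whose probability is $\le C(T,\gamma)N^{-\gamma}$, and since $\C_\lambda(t)\subset\{\tau_N\le T\}$ for $t\le T$, the claim follows.

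I expect the main obstacle to be the Coulomb borderline $s=d-2$, where $K^\eps\notin L^1$ uniformly and one only has $\|K^\eps\|_{L^1}\lesssim\log(1/\eps)=\ell\log N$. This logarithm multiplies the Grönwall/continuity constant, so the bootstrap closes only if $\ell$ is small compared to $1/C_0$, and one must track carefully how the $\log N$ factor interacts with the high-moment exponent $p$ and with the extra restriction $\ell<1/(2d)$ (which guarantees, e.g., that $\|K^\eps\ast\t f_{\beta,\eps}\|_{L^\infty}$-type quantities and the variance of the fluctuation terms do not overwhelm the $N^{-\lambda}$ budget). A secondary technical point is justifying that, on $[0,\tau_N]$, the segment joining $(\t X_{\alpha,i}^\eps,\t X_{\beta,j}^\eps)$ to $(X_{\alpha,i}^\eps,X_{\beta,j}^\eps)$ stays within a region where the Hessian of $V_{\alpha\beta}^\eps$ is genuinely dominated by $K^\eps$ up to a fixed constant — this is elementary but uses the relative sizes $N^{-\lambda}\ll\eps$ and the explicit $4\eps$ cutoff in the definition of $K^\eps$, and it is what makes the "no cut-off on the particle level" claim work.
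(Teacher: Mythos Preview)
Your overall architecture is right (stopping time, auxiliary kernel $K^\eps$, high-moment law of large numbers, the role of $\lambda>\ell$ versus $\lambda<\tfrac12-\ell(s+1)$), but the Gr\"onwall step as you have written it does not close, and the hand-wave around it hides a genuine error.

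First, your three-way split is not the correct telescoping for the drift of $e_{\alpha,i}=\t X_{\alpha,i}^\eps-X_{\alpha,i}^\eps$. That drift equals $\sum_\beta\big[\tfrac1N\sum_j\nabla V_{\alpha\beta}^\eps(X_{\alpha,i}^\eps-X_{\beta,j}^\eps)-\nabla V_{\alpha\beta}^\eps\ast\t f_{\beta,\eps}(\t X_{\alpha,i}^\eps)\big]$; inserting $\tfrac1N\sum_j\nabla V_{\alpha\beta}^\eps(\t X_{\alpha,i}^\eps-\t X_{\beta,j}^\eps)$ gives exactly two pieces, the fluctuation $I_1$ and the consistency term $I_2$ (this is what the paper does). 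Your ``local-Lipschitz term'' $\nabla V_{\alpha\beta}^\eps\ast\t f_{\beta,\eps}(\t X_{\alpha,i}^\eps)-\nabla V_{\alpha\beta}^\eps\ast\t f_{\beta,\eps}(X_{\alpha,i}^\eps)$ does not appear here; you seem to have imported the $J_1,J_2,J_3$ decomposition from the relative-entropy section, where the quantity being bounded has $X_{\alpha,i}^\eps$ (not $\t X_{\alpha,i}^\eps$) in the convolution and a three-way split is indeed needed.

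Second, and this is the fatal point, your Gr\"onwall coefficient $C\eps^{-(s+2)}=CN^{\ell(s+2)}$ is wrong, and no bootstrap can rescue $e^{CTN^{\ell(s+2)}}$: for any fixed $\ell>0$ this diverges super-polynomially, so your claim that ``$C_0$ is exactly the constant that makes $N^{-\lambda}e^{CTN^{\ell(s+2)}}$ stay below $N^{-\lambda}$ once $\ell<1/C_0$'' is false. The whole point of the argument is that the Gr\"onwall coefficient is \emph{not} a positive power of $N$. In the paper's two-term split, the only linear-in-$\max|e|$ contribution comes from the $I_2$ term after applying the $K^\eps$ bound, and its prefactor is $\tfrac1N\sum_j K^\eps(\t X_{\alpha,i}^\eps-\t X_{\beta,j}^\eps)$, which one writes as $K^\eps\ast\t f_{\beta,\eps}(\t X_{\alpha,i}^\eps)$ plus a fluctuation. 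The crucial estimate is $\|K^\eps\ast\t f_{\beta,\eps}\|_{L^\infty}\le C$ when $s<d-2$ and $\|K^\eps\ast\t f_{\beta,\eps}\|_{L^\infty}\le C\ell\log N$ when $s=d-2$ (this uses $\t f_{\beta,\eps}\in L^1\cap L^\infty$ uniformly in $\eps$, not the $L^1$-norm of $K^\eps$). Working with $\E[S_\lambda^p]$ as in the paper, the Gr\"onwall coefficient is therefore $O(1)$ (sub-Coulomb) or $O(p\,\ell\log N)$ (Coulomb), and after Gr\"onwall one picks up at worst a factor $N^{C_4 Tp\ell}$, which is polynomial and can be beaten by the $N^{-(\frac12-\ell(s+2))p}$ gain from the fluctuation terms provided $\ell<1/C_0$ with $C_0=2(C_4T+d)$. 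That is where the constant $C_0$ actually comes from, not from taming $e^{CTN^{\ell(s+2)}}$.

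In short: drop your spurious first term, keep the $I_1/I_2$ split, and recognise that the linear-in-$|e|$ coefficient is $\|K^\eps\ast\t f_{\beta,\eps}\|_{L^\infty}$ (at worst logarithmic in $N$), not $\|\nabla^2 V^\eps\|_{L^\infty}\sim\eps^{-(s+2)}$. With that correction your outline becomes essentially the paper's proof.
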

Without bringing confusion later, the notation $\max_{\alpha=1,\ldots,n}\max_{i=1,\ldots,N}$ is always shorten as $\max_{\alpha,i}$. 
Whenever we use matrix valued functions, $|A|$ denotes the Frobenius norm of the matrix $A$. Before getting into the proof of Proposition \ref{convergence in prob}, we present an important ingredient first, namely a version of law of large numbers result.

\begin{lemma}[Law of large numbers]\label{LLN} Let $\t{X}_{\alpha,i}^\eps$ be the solution of system \eqref{tildeXi} and let $\t{f}_{\alpha,\eps}$ be the density function associated to $\t{X}_{\alpha,i}^\eps$ satisfying \eqref{intermediate}. Given $ 0\leq\theta <\frac{1}{2}$ and a family of  bounded functions $ \Psi_\eps=\{\psi_{\eps}^{\alpha,\beta}\}_{\alpha,\beta=1,\ldots,n}$ depending on $\eps$ which can take values in $\R$, $\R^d$ or $\R^{d\times d}$, we define the set 
\begin{equation}\label{def set A}
\begin{aligned}
& \mathcal{A}_{\theta, \Psi_\eps}^N(t):=\bigcup_{\alpha,\beta=1}^n\bigcup_{i=1}^N\bigg\{\omega \in \Omega:\bigg|\frac{1}{N} \sum_{j=1}^N \psi_\eps^{\alpha,\beta}\big(\t{X}_{\alpha,i}^\eps(t)-\t{X}_{\beta,j}^\eps(t)\big)-\big(\psi_\eps^{\alpha,\beta} * \t{f}_{\beta,\eps}\big)\big(\t{X}_{\alpha,i}^\eps(t)\big)\bigg|>N^{-\theta}\bigg\}.
\end{aligned}
\end{equation}
Then, for every $m \in \mathbb{N}$ and $T>0$, it holds
$$
\begin{aligned}
& \sup_{t\in[0,T]}\P\left(\mathcal{A}_{\theta, \Psi_\eps}^N(t)\right) \leq n^2\max_{\alpha,\beta}C(m,\alpha,\beta,T)\left\|\psi_\eps^{\alpha,\beta}\right\|_{L^{\infty}}^{2 m} N^{ m(2\theta-1)+1}.
\end{aligned}
$$
\end{lemma}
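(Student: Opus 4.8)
\textbf{Proof plan for Lemma \ref{LLN}.}

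The plan is to estimate $\P(\mathcal{A}_{\theta,\Psi_\eps}^N(t))$ by a union bound over the $n^2 N$ events indexing $\mathcal{A}_{\theta,\Psi_\eps}^N(t)$, then apply Markov's inequality of order $2m$ to each single event. Fix $\alpha,\beta,i,t$ and set
\[
S_{\alpha\beta i}:=\frac{1}{N}\sum_{j=1}^N\Big(\psi_\eps^{\alpha,\beta}\big(\t X_{\alpha,i}^\eps(t)-\t X_{\beta,j}^\eps(t)\big)-\big(\psi_\eps^{\alpha,\beta}*\t f_{\beta,\eps}\big)\big(\t X_{\alpha,i}^\eps(t)\big)\Big).
\]
By Markov, $\P(|S_{\alpha\beta i}|>N^{-\theta})\leq N^{2m\theta}\,\E|S_{\alpha\beta i}|^{2m}$, so the heart of the matter is the moment bound $\E|S_{\alpha\beta i}|^{2m}\leq C(m,\alpha,\beta,T)\|\psi_\eps^{\alpha,\beta}\|_{L^\infty}^{2m} N^{-m}$. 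Summing the $n^2N$ terms then yields the factor $N^{m(2\theta-1)+1}$ claimed.

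To get the moment bound, first handle the exchangeable-but-not-independent nature of the summands. When $\alpha\neq\beta$, the variables $\t X_{\beta,j}^\eps$ ($j=1,\dots,N$) are i.i.d.\ with law $\t f_{\beta,\eps}(t)$ and are independent of $\t X_{\alpha,i}^\eps$, so conditionally on $\t X_{\alpha,i}^\eps$ the summands $Y_j:=\psi_\eps^{\alpha,\beta}(\t X_{\alpha,i}^\eps-\t X_{\beta,j}^\eps)-(\psi_\eps^{\alpha,\beta}*\t f_{\beta,\eps})(\t X_{\alpha,i}^\eps)$ are i.i.d., centered, and bounded by $2\|\psi_\eps^{\alpha,\beta}\|_{L^\infty}$. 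When $\alpha=\beta$ one must excise the $j=i$ term (or note it contributes $O(1/N)$) and then the remaining $Y_j$, $j\neq i$, are conditionally i.i.d.\ given $\t X_{\alpha,i}^\eps$. In either case, apply the Marcinkiewicz–Zygmund / Rosenthal inequality for sums of i.i.d.\ centered bounded random variables: $\E\big[|\frac1N\sum_j Y_j|^{2m}\big]\leq C(m)\|\psi_\eps^{\alpha,\beta}\|_{L^\infty}^{2m} N^{-m}$, uniformly in the conditioning variable, hence also after taking expectation over $\t X_{\alpha,i}^\eps$. The dependence on $\alpha,\beta,T$ in $C(m,\alpha,\beta,T)$ is harmless; in fact since the bound only uses $\|\psi_\eps^{\alpha,\beta}\|_{L^\infty}$ it could be taken independent of $\alpha,\beta,T$, but we keep the stated form for flexibility.

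Finally, since the single-event bound $N^{2m\theta}\cdot C(m)\|\psi_\eps^{\alpha,\beta}\|_{L^\infty}^{2m}N^{-m}$ is uniform in $t\in[0,T]$ and in $i$, summing over $\alpha,\beta\in\{1,\dots,n\}$ and $i\in\{1,\dots,N\}$ gives
\[
\sup_{t\in[0,T]}\P\big(\mathcal{A}_{\theta,\Psi_\eps}^N(t)\big)\leq n^2 N\cdot \max_{\alpha,\beta}C(m,\alpha,\beta,T)\|\psi_\eps^{\alpha,\beta}\|_{L^\infty}^{2m} N^{m(2\theta-1)}=n^2\max_{\alpha,\beta}C(m,\alpha,\beta,T)\|\psi_\eps^{\alpha,\beta}\|_{L^\infty}^{2m} N^{m(2\theta-1)+1},
\]
which is the assertion. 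The main (really the only) obstacle is bookkeeping the conditional-independence structure correctly—especially the diagonal $\alpha=\beta$, $j=i$ term—so that a clean MZ/Rosenthal-type moment inequality applies; once that is set up, the rest is a union bound and Markov's inequality. Note that $\theta<1/2$ is exactly what makes the exponent $m(2\theta-1)+1$ negative for $m$ large enough, so choosing $m>1/(1-2\theta)$ makes the right-hand side a negative power of $N$.
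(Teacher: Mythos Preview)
Your proposal is correct and follows essentially the same route as the paper: union bound over the $n^2N$ events, Markov's inequality of order $2m$, and a conditional (on $\t X_{\alpha,i}^\eps$) centered-i.i.d.\ moment bound yielding $\E|S_{\alpha\beta i}|^{2m}\lesssim \|\psi_\eps^{\alpha,\beta}\|_{L^\infty}^{2m}N^{-m}$. The only cosmetic difference is that the paper obtains this moment bound by a direct combinatorial expansion (showing that only multi-indices in which every $j$ appears at least twice survive, of cardinality $O(N^m)$) rather than invoking Marcinkiewicz--Zygmund/Rosenthal by name; your handling of the diagonal term $\alpha=\beta$, $j=i$ is in fact slightly more explicit than the paper's.
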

The fact that $\theta<\frac{1}{2}$ can be heuristically interpreted by the scaling of the central limit theorem. We can also see easily that
$$\big(\mathcal{A}_{\theta, \Psi_\eps}^N(t)\big)^c:=\bigcap_{\alpha,\beta=1}^n\bigcap_{i=1}^N\bigg\{\omega \in \Omega:\bigg|\frac{1}{N} \sum_{j=1}^N \psi_\eps^{\alpha,\beta}\big(\t{X}_{\alpha,i}^\eps(t)-\t{X}_{\beta,j}^\eps(t)\big)-\big(\psi_\eps^{\alpha,\beta} * \t{f}_{\beta,\eps}\big)\big(\t{X}_{\alpha,i}^\eps(t)\big)\bigg|\leq N^{-\theta}\bigg\}.
$$ The proof is similar to \cite[Lemma 4.2]{H23} but here it is for multi-species case, which we will prove in Appendix \ref{appendix LLN}.

We also present an $L^\infty$-bound estimate of the regularised potential as a separate remark, which will appear several times later. 
\begin{remark}\label{fractional gradient}
For $k=1,2$, we have the  following bound: 
\begin{equation*}
\begin{aligned}
\left\|\nabla^kV^\eps_{\alpha\beta}\right\|_{L^\infty}\leq \max_{\alpha,\beta}|a_{\alpha\beta}| \left\|\frac{1}{|\cdot|^{s}}\ast \nabla^{k}\chi^\eps\right\|_{L^{\infty}} 
\leq & \,\frac{C(\max_{\alpha,\beta}|a_{\alpha\beta}|, \|\chi\|_{W^{2,1}\cap W^{2,\infty} })}{\eps^{k+s}}\\= & \,C(\max_{\alpha,\beta}|a_{\alpha\beta}|, \|\chi\|_{W^{2,1}\cap W^{2,\infty} })N^{\ell(k+s)},
\end{aligned}    
\end{equation*}
where the proof can be found in \cite[Lemma 17]{chen2024fluctuations}. 
\end{remark}
To prove Proposition \ref{convergence in prob}, we construct the stopping time, for some parameter $\lambda>0$, as 
$$
\tau_{\lambda}(\omega):=\inf\left\{t\in(0,T):\max_{\alpha,i}\big|\t{X}_{\alpha,i}^\eps(t)-X_{\alpha,i}^\eps(t)\big|\geq \frac{1}{N^\lambda}\right\}>0,
$$
which is well-defined, since the corresponding SDEs have continuous trajectories.
Define the stochastic process $S_\lambda$  as
\begin{equation}\label{Slambda}
\begin{aligned}
S_{\lambda}(t):=N^\lambda \max_{\alpha,i}\big|\t{X}_{\alpha,i}^\eps(t\wedge \tau_\lambda)-X_{\alpha,i}^\eps(t\wedge \tau_\lambda)\big|.
\end{aligned}
\end{equation}
It is easy to see that $S_\lambda(t)\leq 1$, which we will use later. The required probability of set \eqref{clam} in Proposition \ref{convergence in prob} can be bounded by the expectation of the process $S_\lambda^p(t):=(S_\lambda(t))^p$ as
\begin{equation}\label{any p}
\begin{aligned}
\P(\C_{\lambda}(t))\leq \P(\{\omega\in\Omega:\tau_{\lambda}\leq t\})
=&\,\P\big(\{\omega\in\Omega:\max_{\alpha,i}\big|\t{X}_{\alpha,i}^\eps(t\wedge \tau_{\lambda})-X_{\alpha,i}^\eps(t\wedge \tau_{\lambda})\big|= N^{-\lambda}\}\big)\\
=\,& \P (\{\omega\in \Omega: S_{\lambda}(t)=1\}),
\end{aligned}
\end{equation}
because the event $\max_{\alpha,i}|\t{X}_{\alpha,i}^\eps(t)-X_{\alpha,i}^\eps(t)\big|\geq N^{-\lambda}$ implies the event $\tau_{\lambda}\leq t$ by the definition of the set $\C_{\lambda}(t)$ in \eqref{clam}, and then for almost any $\omega$ such that $\tau_{\lambda}\leq t$, it holds 
$$
\max_{\alpha,i}\big|\t{X}_{\alpha,i}^\eps(t\wedge \tau_{\lambda})-X_{\alpha,i}^\eps(t\wedge \tau_{\lambda})\big|=\max_{\alpha,i}\big|\t{X}_{\alpha,i}^\eps( \tau_{\lambda})-X_{\alpha,i}^\eps( \tau_{\lambda})\big|= N^{-\lambda}.
$$
Since the set $\{\omega\in \Omega: S_{\lambda}^p(t)=1\}$ for some power $p\in\N$, actually does not depend on $p$, then for any $p\in\N$, it yields by Markov's inequality that
\begin{equation}\label{clamp}
\P(\C_{\lambda}(t))\leq \P (\{\omega\in \Omega: S_{\lambda}(t)=1\}) =\P\big(\{\omega\in \Omega: S_{\lambda}^p(t)\geq 1\}\big)\leq \E [S_{\lambda}^p(t)].
\end{equation}
Notice that \eqref{clamp} holds for any $p\in\N$, hence it is sufficient to show the following lemma with a suitable $p$ to deduce Proposition \ref{convergence in prob}.
\begin{lemma}\label{expectation lemma}
Assume the range of $\ell$ is \eqref{range l}. For any $\gamma>0$, there exist some $p\in \N$ and $\ell< \lambda<1/2-\ell (s+1)$ such that 
\begin{equation}\label{expectation S}
\sup_{t\in[0,T]}\E [S_{\lambda}^p(t)]\leq C(n,T,p)N^{-\gamma},
\end{equation}
where the constant is independent of $N$.    
\end{lemma}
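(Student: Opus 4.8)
\textbf{Proof plan for Lemma \ref{expectation lemma}.}

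The plan is to derive a closed Gr\"onwall-type differential inequality for $\E[S_\lambda^p(t)]$ and then optimise the free parameters $p$, $\lambda$ at the end. First I would apply It\^o's formula to the process $|\t X_{\alpha,i}^\eps(t\wedge\tau_\lambda)-X_{\alpha,i}^\eps(t\wedge\tau_\lambda)|^2$; because both SDEs in \eqref{X} and \eqref{tildeXi} are driven by the \emph{same} Brownian motions $B_{\alpha,i}$ and start from the \emph{same} data $Z_{\alpha,i}$, the martingale part and the $\Delta$-terms cancel, leaving only the drift difference. Hence $\frac{\d}{\d t}|\t X_{\alpha,i}^\eps-X_{\alpha,i}^\eps|^2$ is controlled by $2|\t X_{\alpha,i}^\eps-X_{\alpha,i}^\eps|$ times the difference of the two drift vectors. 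Taking $\max_{\alpha,i}$, raising to the power $p$ (using the chain rule for $s\mapsto s^{p/2}$, valid since $S_\lambda$ is nonnegative and bounded by $1$), and taking expectations, I would reduce \eqref{expectation S} to bounding
$$
\E\Big[ N^{\lambda p}\max_{\alpha,i}|\t X_{\alpha,i}^\eps-X_{\alpha,i}^\eps|^{p-1}\Big(\Big|\sum_\beta \nabla V_{\alpha\beta}^\eps\ast\t f_{\beta,\eps}(\t X_{\alpha,i}^\eps)-\tfrac1N\sum_{\beta,j}\nabla V_{\alpha\beta}^\eps(X_{\alpha,i}^\eps-X_{\beta,j}^\eps)\Big|\Big)\Big]
$$
up to time $t\wedge\tau_\lambda$ and an integrating factor.

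Next I would split the drift difference, exactly as in the decomposition $J_1+J_2+J_3$ sketched in Section \ref{proof}, into three contributions: (i) a \emph{consistency/law-of-large-numbers} term $\tfrac1N\sum_{\beta,j}\nabla V_{\alpha\beta}^\eps(\t X_{\alpha,i}^\eps-\t X_{\beta,j}^\eps)-\nabla V_{\alpha\beta}^\eps\ast\t f_{\beta,\eps}(\t X_{\alpha,i}^\eps)$, handled on the good set $(\mathcal A_{\theta,\Psi_\eps}^N)^c$ of Lemma \ref{LLN} with $\Psi_\eps=\{\nabla V_{\alpha\beta}^\eps\}$, contributing $O(N^{-\theta})$ on that set and $O(\|\nabla V_{\alpha\beta}^\eps\|_{L^\infty})=O(N^{\ell(s+1)})$ on its complement whose probability is $\lesssim N^{m(2\theta-1)+1}$ by Lemma \ref{LLN} (choose $m$ large); (ii) a term $\nabla V_{\alpha\beta}^\eps\ast\t f_{\beta,\eps}(\t X_{\alpha,i}^\eps)-\nabla V_{\alpha\beta}^\eps\ast\t f_{\beta,\eps}(X_{\alpha,i}^\eps)$, bounded by $\|\nabla^2 V_{\alpha\beta}^\eps\ast\t f_{\beta,\eps}\|_{L^\infty}\,|\t X_{\alpha,i}^\eps-X_{\alpha,i}^\eps|\lesssim N^{\ell s}\|\t f_{\beta,\eps}\|_{L^\infty}\,|\t X_{\alpha,i}^\eps-X_{\alpha,i}^\eps|$, which feeds back into the Gr\"onwall structure (this is where the auxiliary kernel $K^\eps$ and Remark \ref{fractional gradient} are used to avoid a genuine second derivative of the Coulomb kernel); and (iii) the symmetrised empirical difference $\tfrac1N\sum_{\beta,j}[\nabla V_{\alpha\beta}^\eps(\t X_{\alpha,i}^\eps-\t X_{\beta,j}^\eps)-\nabla V_{\alpha\beta}^\eps(X_{\alpha,i}^\eps-X_{\beta,j}^\eps)]$, where on the event $\{t\le\tau_\lambda\}$ every increment is $\le 2N^{-\lambda}$, so one bounds $\nabla V_{\alpha\beta}^\eps$ by its local Lipschitz constant along the segment — here replacing the direct second-derivative bound of \cite{chen2024fluctuations} by $K^\eps$ evaluated at the relevant points, which gives a factor like $\tfrac1N\sum_j K^\eps(\t X_{\alpha,i}^\eps-\t X_{\beta,j}^\eps)\cdot|\t X_{\cdot}-X_{\cdot}|$ plus a near-diagonal correction of size $O(\eps^{-(s+1)}N^{-\lambda})$ coming from the at most $O(1)$ (in expectation) indices $j$ with $|\t X_{\alpha,i}^\eps-\t X_{\beta,j}^\eps|<4\eps$. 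The average $\tfrac1N\sum_j K^\eps(\t X_{\alpha,i}^\eps-\t X_{\beta,j}^\eps)$ is then again controlled by a second application of Lemma \ref{LLN} with $\Psi_\eps=\{K^\eps\}$ (note $\|K^\eps\|_{L^\infty}\lesssim\eps^{-(s+2)}=N^{\ell(s+2)}$) plus $\|K^\eps\ast\t f_{\beta,\eps}\|_{L^\infty}\lesssim\|\t f_{\beta,\eps}\|_{L^1\cap L^\infty}$, which is uniformly bounded by \eqref{uniform in eps}.

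Assembling these bounds, on the complement of the bad sets (whose total probability is $\le C(T,\gamma)N^{-\gamma}$ once $m$ is chosen large relative to $p,\gamma$ and $\theta<1/2$ is fixed close enough to $1/2$) I would obtain, for $t\le\tau_\lambda$,
$$
\frac{\d}{\d t}\E[S_\lambda^p(t)] \le C_0(T)\,\E[S_\lambda^p(t)] + C(T,p)\,N^{\lambda}\big(N^{-\theta}+N^{\ell(s+1)-\lambda}+N^{\ell(s+2)-1}\big)^{\!1}\!\!\cdot(\text{lower order}),
$$
where $C_0(T)$ is precisely the constant in \eqref{range l}; the restriction $\ell<1/C_0$ in the Coulomb case is exactly what makes $e^{C_0 T}$ a harmless $N$-independent (indeed sub-polynomial after optimisation) factor when $\eps=N^{-\ell}$, and a Gr\"onwall argument then yields $\sup_{[0,T]}\E[S_\lambda^p(t)]\le C(n,T,p)\,N^{-\gamma}$ provided $\lambda<1/2-\ell(s+1)$ (to kill the first two forcing terms with $\theta\uparrow1/2$) and $\lambda>\ell$ (needed so that the near-diagonal correction $N^{\lambda}\cdot\eps^{-(s+1)}N^{-\lambda}\cdot N^{-?}$ and the $K^\eps$-averaging remainders close), taking $p$ large enough that $\lambda p-(\text{gain})\ge\gamma$. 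I expect the \textbf{main obstacle} to be step (iii): carefully accounting for the near-diagonal indices $j$ where $\t X_{\beta,j}^\eps$ is within $O(\eps)$ of $\t X_{\alpha,i}^\eps$, so that replacing $\nabla V_{\alpha\beta}^\eps$ by the integrable surrogate $K^\eps$ is legitimate without paying the full $\|\nabla^2 V_{\alpha\beta}^\eps\|_{L^\infty}\sim\eps^{-(s+2)}$ — this requires a probabilistic count of such collisions using the $L^\infty$ bound on $\t f_{\beta,\eps}$, and is the place where the admissible range of $\ell$ (and the upper constraint $\ell<1/(2d)$ appearing in Proposition \ref{convergence in prob} in the Coulomb case) is really determined.
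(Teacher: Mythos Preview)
Your overall architecture is right and matches the paper's: derive an integral inequality for $\E[S_\lambda^p]$ by subtracting the two SDEs (same Brownian motion, so no martingale term), split the drift difference into a law-of-large-numbers piece and an empirical Lipschitz piece, control the latter via the auxiliary kernel $K^\eps$, and close by Gr\"onwall. However, there are two genuine gaps in your treatment of the Coulomb case, and one misplaced worry.

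\textbf{The key gap: $\|K^\eps\ast\t f_{\beta,\eps}\|_{L^\infty}$ is \emph{not} uniformly bounded when $s=d-2$.} You write that this quantity is $\lesssim\|\t f_{\beta,\eps}\|_{L^1\cap L^\infty}$. This is only true in the sub-Coulomb regime. When $s=d-2$, the kernel $K^\eps(x)=|x|^{-d}$ for $|x|\ge4\eps$ is not locally integrable, and one gets instead
\[
\|K^\eps\ast\t f_{\beta,\eps}\|_{L^\infty}\le C(d)\|\t f_{\beta,\eps}\|_{L^\infty(0,T;L^1\cap L^\infty)}\,\ell\log N,
\]
with the $\log N$ coming from $\int_{4\eps\le|y|<1}|y|^{-d}\,\d y$. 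This $\log N$ enters the Gr\"onwall prefactor, so the integral inequality reads
\[
\E[S_\lambda^p(t)]\le(\text{source terms})+C_4\,p\,\ell\,(\log N)\int_0^t\E[S_\lambda^p(\tau)]\,\d\tau,
\]
and after Gr\"onwall one picks up a factor $e^{C_4Tp\ell\log N}=N^{C_4Tp\ell}$. \emph{This}, not $e^{C_0T}$, is why $\ell<1/C_0$ is required: one needs $(\tfrac12-\ell(s+2))p-C_4Tp\ell>\gamma$, i.e.\ $\ell<\tfrac{1}{2(C_4T+d)}$. Your sentence ``the restriction $\ell<1/C_0$\dots is exactly what makes $e^{C_0T}$ a harmless $N$-independent factor'' misidentifies the mechanism entirely.

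\textbf{The misplaced worry.} You flag the ``near-diagonal indices $j$ with $|\t X_{\alpha,i}^\eps-\t X_{\beta,j}^\eps|<4\eps$'' as the main obstacle and propose a probabilistic collision count. This is unnecessary. The whole point of the construction of $K^\eps$ is that the pointwise Lipschitz bound $|\nabla V_{\alpha\beta}^\eps(x+\xi)-\nabla V_{\alpha\beta}^\eps(x)|\le C_3K^\eps(x)|\xi|$ holds \emph{uniformly for all $x$} once $|\xi|\le2\eps$ (guaranteed by $\lambda>\ell$ before the stopping time), precisely because $K^\eps$ is defined as the constant $(4\eps)^{-(s+2)}$ on $B_{4\eps}$. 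So no separate near-diagonal correction term appears; the empirical average $\tfrac1N\sum_jK^\eps(\t X_{\alpha,i}^\eps-\t X_{\beta,j}^\eps)$ is handled in one stroke by Lemma~\ref{LLN} plus the (possibly $\log N$-sized) bound on $\|K^\eps\ast\t f_{\beta,\eps}\|_{L^\infty}$.

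\textbf{Minor point.} Your three-term split (i)+(ii)+(iii) is over-complete: the drift difference here is $\nabla V^\eps_{\alpha\beta}\ast\t f_{\beta,\eps}(\t X^\eps_{\alpha,i})-\tfrac1N\sum_j\nabla V^\eps_{\alpha\beta}(X^\eps_{\alpha,i}-X^\eps_{\beta,j})$, evaluated at $\t X$ not at $X$, so your term (ii) is not needed. The paper uses exactly two pieces, your (i) and (iii).
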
 
\begin{proof}
Since the particle system \eqref{X} satisfied by $X_{\alpha,i}^\eps$ and the independent copy of the intermediate SDE  \eqref{tildeXi} satisfied by $\t{X}_{\alpha,i}^\eps$ have the same Brownian motion,  It\^{o}'s formula reduces to the following differential identity 
$$
\begin{aligned}
&\d_\tau\big|\t{X}_{\alpha,i}^\eps(\tau)-X_{\alpha,i}^\eps(\tau)\big|^p=p\big|\t{X}_{\alpha,i}^\eps(\tau)-X_{\alpha,i}^\eps(\tau)\big|^{p-2}\big(\t{X}_{\alpha,i}^\eps(\tau)-X_{\alpha,i}^\eps(\tau)\big)\cdot\d_\tau\big(\t{X}_{\alpha,i}^\eps(\tau)-X_{\alpha,i}^\eps(\tau)\big) \\
= &\,p\big|\t{X}_{\alpha,i}^\eps(\tau)-X_{\alpha,i}^\eps(\tau)\big|^{p-2}\big(\t{X}_{\alpha,i}^\eps(\tau)-X_{\alpha,i}^\eps(\tau)\big)\\&\qquad\times \Big(\frac{1}{N}\sum_{j=1}^N \sum_{\beta=1}^n \nabla V_{\alpha\beta}^\eps( X_{\alpha,i}^\eps(\tau) - X_{\beta,j}^\eps(\tau))-\sum_{\beta=1}^n\nabla V_{\alpha\beta}^\eps \ast \t{f}_{\beta,\eps}(\tau,\t{X}_{\alpha,i}^\eps(\tau))\Big) 
\end{aligned}
$$
with $\t{X}_{\alpha,i}^\eps(0)=X_{\alpha,i}^\eps(0)$, because the quadratic variation term vanishes. We integrate from time $0$ to $t\wedge \tau_{\lambda}$ on both hand-side of the equality above, then for almost all $\omega\in\Omega$, we have the estimate
$$
\begin{aligned}
&\big|\t{X}_{\alpha,i}^\eps(t\wedge \tau_{\lambda})-X_{\alpha,i}^\eps(t\wedge \tau_{\lambda})\big|^p\\\leq &\int_0^{t\wedge \tau_{\lambda}}p\big|\t{X}_{\alpha,i}^\eps(\tau)-X_{\alpha,i}^\eps(\tau)\big|^{p-1}\bigg|\frac{1}{N}\sum_{j=1}^N \sum_{\beta=1}^n \nabla V_{\alpha\beta}^\eps( X_{\alpha,i}^\eps(\tau)  X_{\beta,j}^\eps(\tau))\\&\qquad\qquad\qquad\qquad\qquad\qquad\qquad\qquad\qquad\qquad-\sum_{\beta=1}^n\nabla V_{\alpha\beta}^\eps \ast \t{f}_{\beta,\eps}(\tau,\t{X}_{\alpha,i}^\eps(\tau))\bigg|\,\d \tau.     
\end{aligned}
$$
Recalling the definition of $S_{\lambda}$ in \eqref{Slambda}, then we  control $S_{\lambda}^p(t)$ by the sum of two quantities $I_1(t)$ and $I_2(t)$ as
$$
\begin{aligned}
S_{\lambda}^p(t)=\,&N^{\lambda p} \max_{\alpha,i}\big|\t{X}_{\alpha,i}^\eps(t\wedge \tau_{\lambda})-X_{\alpha,i}^\eps(t\wedge \tau_{\lambda})\big|^p   \\
\leq &\, N^{\lambda p}\max_{\alpha,i}\int_0^{t\wedge \tau_{\lambda}}p\big|\t{X}_{\alpha,i}^\eps(\tau)-X_{\alpha,i}^\eps(\tau)\big|^{p-1}\bigg|\frac{1}{N}\sum_{j=1}^N \sum_{\beta=1}^n \nabla V_{\alpha\beta}^\eps( X_{\alpha,i}^\eps(\tau) - X_{\beta,j}^\eps(\tau))\\&\qquad\qquad\qquad\qquad\qquad\qquad\qquad\qquad\qquad\qquad\qquad-\sum_{\beta=1}^n\nabla V_{\alpha\beta}^\eps \ast \t{f}_{\beta,\eps}(\tau,\t{X}_{\alpha,i}^\eps(\tau))\bigg|\,\d \tau\\
\leq &\, p N^{\lambda }\max_{\alpha,i}\sum_{\beta=1}^n\int_0^{t\wedge \tau_{\lambda}}S_{\lambda}^{p-1}(\tau)\bigg|\frac{1}{N}\sum_{j=1}^N  \nabla V_{\alpha\beta}^\eps( X_{\alpha,i}^\eps(\tau) - X_{\beta,j}^\eps(\tau))\\&\qquad\qquad\qquad\qquad\qquad\qquad\qquad\qquad\qquad\qquad-\nabla V_{\alpha\beta}^\eps \ast \t{f}_{\beta,\eps}(\tau,\t{X}_{\alpha,i}^\eps(\tau))\bigg|\,\d \tau,
\end{aligned}
$$
where we replace $N^{\lambda(p-1)}\big|\t{X}_{\alpha,i}^\eps(\tau)-X_{\alpha,i}^\eps(\tau)\big|^{p-1}$ by $S_\lambda^{p-1}(\tau)$ for any $0\leq\tau\leq t\wedge \tau_\lambda$. We further let
$$
\begin{aligned}
I_{1,\alpha,\beta,i}(\tau)=\,&\Big|\frac{1}{N}\sum_{j=1}^N  \nabla V_{\alpha\beta}^\eps(\t X_{\alpha,i}^\eps(\tau) -\t X_{\beta,j}^\eps(\tau))-\nabla V_{\alpha\beta}^\eps \ast \t{f}_{\beta,\eps}(\t{X}_{\alpha,i}^\eps(\tau))\Big|,
\end{aligned}
$$
and
$$\begin{aligned}
I_{2,\alpha,\beta,i}(\tau)=\,&\Big|\frac{1}{N}\sum_{j=1}^N \Big(\nabla V_{\alpha\beta}^\eps(X_{\alpha,i}^\eps - X_{\beta,j}^\eps) -\nabla V_{\alpha\beta}^\eps(\t X_{\alpha,i}^\eps -\t X_{\beta,j}^\eps)\Big)\Big|.
\end{aligned}
$$
Then it holds 
\begin{equation}\label{I1I2}
\begin{aligned}
\E[S_{\lambda}^p(t)]\leq & np N^{\lambda }\E\left[\int_0^{t\wedge \tau_{\lambda}}S_{\lambda}^{p-1}(\tau)\max_{\alpha,\beta}\max_{i}I_{1,\alpha,\beta,i}(\tau)\d \tau\right]\\&+np N^{\lambda }\E\left[\int_0^{t\wedge \tau_{\lambda}}S_{\lambda}^{p-1}(\tau)\max_{\alpha,\beta}\max_{i}I_{2,\alpha,\beta,i}(\tau)\d \tau\right]\\
=:& I_1(t)+I_2(t).
\end{aligned}
\end{equation}
For some arbitrary $\theta_1\in[0,1/2)$ and $m_1\in \N$, Lemma \ref{LLN} leads to the estimate for the probability of the set defined as \eqref{def set A} with a family of the vector-valued functions $\nabla\mathbf{V}^\eps:=\{\nabla V^\eps_{\alpha\beta}\}_{\alpha,\beta=1,\ldots,n}$ as 
$$
\sup_{t\in[0,T]}\P\left(\mathcal{A}_{\theta_1, \nabla\mathbf{V}^{\eps}}^N(t)\right) \leq n^2\max_{\alpha,\beta}C(m_1,\alpha,\beta,T)\left\|\nabla V^\eps_{\alpha\beta}\right\|_{L^{\infty}}^{2 m_1} N^{ m_1(2\theta_1-1)+1},
$$
where the set is $$
\begin{aligned}
\mathcal{A}_{\theta_1, \nabla\mathbf{V}^{\eps}}^N:=\bigcup_{\alpha,\beta=1}^n\bigcup_{i=1}^N\bigg\{\omega \in \Omega:\bigg|\frac{1}{N} \sum_{j=1}^N \nabla V^\eps_{\alpha\beta}\big(\t{X}_{\alpha,i}^\eps-\t{X}_{\beta,j}^\eps\big)-\big(\nabla V^\eps_{\alpha\beta} * \t{f}_{\beta,\eps}\big)\big(\t{X}_{\alpha,i}^\eps\big)\bigg|>N^{-\theta_1}\bigg\}.
\end{aligned}
$$
As a result, we infer that by using Young's inequality
$$
I_1(t)\leq \frac{p-1}{p}\E\Big[\int_0^{t\wedge \tau_{\lambda}}S_{\lambda}^p(\tau) \,\d \tau \Big] +n^pp^{p-1}N^{\lambda p} \E\Big[\int_0^{t\wedge \tau_{\lambda}}  \max_{\alpha,\beta,i}\big|I_{1,\alpha,\beta,i}(\tau)\big|^p\,\d \tau\bigg]. 
$$
We now split the last integral as
\begin{align*}
\E\Big[\int_0^{t\wedge \tau_{\lambda}} & \max_{\alpha,\beta,i}\big|I_{1,\alpha,\beta,i}(\tau)\big|^p\,\d \tau\bigg]\\ 
&\leq
\int_0^t\E\Big[\max_{\alpha,\beta,i}\big| I_{1,\alpha,\beta,i}(\tau)\big|^pI_{(\mathcal{A}^N_{\theta_1, \nabla\mathbf{V}^{\eps}})^c}\Big]\d \tau  
+\int_0^t\E\Big[\max_{\alpha,\beta,i}\big| I_{1,\alpha,\beta,i}(\tau)\big|^pI_{\mathcal{A}^N_{\theta_1, \nabla\mathbf{V}^{\eps}}}\Big]\d \tau  \\
&\leq\,T  N^{-p\theta_1}+C(m_1)Tn^{2} \max_{\alpha,\beta}\left\|\nabla V_{\alpha\beta}^\eps\right\|_{L^{\infty}}^{2 m_1+p} N^{ m_1(2\theta_1-1)+1}.
\end{align*}
Collecting terms, we obtain
\begin{equation}\label{eq I_1}
I_1(t)
\leq\, C(n,p,T) N^{p(\lambda -\theta_1)}+C(m_1,n,p,T)N^{\lambda p+(2m_1+p)\ell(1+s)+m_1(2\theta_1-1)+1}+\int_0^t\E\big[S_{\lambda}^p(\tau) \big]\,\d \tau,
\end{equation}
where we used Remark \ref{fractional gradient} in the last step to estimate $\|\nabla V^\eps_{\alpha\beta}\|_{L^\infty}$. 

Now we focus on the mean-field estimate of $I_2$ in \eqref{I1I2}. Recall the regularised potential $V_{\alpha\beta}^\eps= a_{\alpha\beta}\chi^\eps \ast V$ with the mollifier satisfying assumption (H1), (H2) and (H5), where $\chi^\eps$ is supported on $B_\eps$. 
When $|x|\geq 2\eps$, we have $|x|\leq 2|x-y|$ with $y\in B_\eps$. And for some constant $C_1$ depending on $s$, $d$ and $\max_{\alpha,\beta}|a_{\alpha\beta}|$,it yields 
$$
|\nabla^2 V^\eps_{\alpha\beta}(x)|\leq C_1\int_{B_\eps}\frac{1}{|x-y|^{s+2}}\chi^\eps(y)\,\d y\leq C_1 \sup_{y\in B_\eps}\frac{1}{|x-y|^{s+2}}\int_{B_\eps}\chi^\eps(y)\,\d y\leq \frac{2^{s+2}C_1}{|x|^{s+2}}.
$$
And for any $x$,  it holds by Remark \ref{fractional gradient} that 
$$
|\nabla^2 V^\eps_{\alpha\beta}(x)|\leq \|\nabla^2 V^\eps_{\alpha\beta}\|_{L^\infty}\leq \frac{C_2}{\eps^{s+2}},
$$
where the constant 
$C_2$ depends on  $\max_{\alpha,\beta}|a_{\alpha\beta}|$ and $\|\chi\|_{W^{2,1}\cap W^{2,\infty} }$. We now use these bounds of the Hessians as follows: if $|x|\geq 4\eps$ and $|\xi|\leq2\eps$, then there exits some constant $\iota\in(0,1)$ such that 
$$
|\nabla V^\eps_{\alpha\beta}(x+\xi)-\nabla V^\eps_{\alpha\beta}(x)|\leq |\nabla^2  V^\eps_{\alpha\beta}(x+\iota\xi)||\xi|\leq \frac{2^{s+2}C_1}{|x|^{s+2}}|\xi|;
$$
on the other hand, for $|x|<4\eps$ and $|\xi|\leq 2\eps$, we have 
$$
|\nabla V^\eps_{\alpha\beta}(x+\xi)-\nabla V^\eps_{\alpha\beta}(x)|\leq |\nabla^2  V^\eps_{\alpha\beta}(x+\iota\xi)||\xi|\leq  \frac{C_2}{\eps^{s+2}}|\xi|. 
$$
Inspired by \cite{HLP20}, we construct an auxiliary continuous function $K^\eps:\R^{d}\rightarrow\R$ as follows, 
\begin{equation}\label{auxiliary}
K^\eps(x)=\begin{cases}\displaystyle
    \frac{1}{|x|^{ {s+2}}}\quad |x|\geq 4\eps \\ \\
    \displaystyle\frac{1}{(4\eps)^{s+2}}\quad |x|< 4\eps,
\end{cases}\end{equation}
which can be used to bound $|\nabla V^\eps_{\alpha\beta}(x+\xi)-\nabla V^\eps_{\alpha\beta}(x)|$ in a unified way for any $x\in\R^d$ with $|\xi|\leq 2\eps$. For some constant $C_3 =C_3 (d,s,\|\chi\|_{W^{2,1}\cap W^{2,\infty} },
\max_{\alpha,\beta}|a_{\alpha\beta}|)$, we have for all $x$ that
\begin{equation}\label{ineq C3}
|\nabla V^\eps_{\alpha\beta}(x+\xi)-\nabla V^\eps_{\alpha\beta}(x)|\leq  C_3  K^\eps(x)|\xi|,\quad |\xi|\leq 2\eps. 
\end{equation}
Notice that if we assume $\lambda> \ell$, then we have the following bound before the stopping time $\tau_\lambda$ 
\begin{equation}\label{double difference}
\big|(X_{\alpha,i}^\eps- X_{\beta,j}^\eps)-(\t X_{\alpha,i}^\eps -\t X_{\beta,j}^\eps)\big|(\tau)\leq 2\max_{\alpha,i}\big|X_{\alpha,i}^\eps(\tau) - \t X_{\alpha,i}^\eps(\tau) \big|\leq 2N^{-\lambda}< 2N^{-\ell}= 2\eps,
\end{equation}
for any $\tau\leq \tau_\lambda$. Remark that the reason why we cannot let $\lambda=\ell$ is because that
\eqref{J1} in the next section can be controlled under condition $\lambda>\ell$ .  Putting together \eqref{ineq C3} and \eqref{double difference}, we deduce
$$
\begin{aligned}
I_{2,\alpha,\beta,i}\leq&\Big|\frac{1}{N}\sum_{j=1}^N \big|\nabla V_{\alpha\beta}^\eps(X_{\alpha,i}^\eps - X_{\beta,j}^\eps) -\nabla V_{\alpha\beta}^\eps(\t X_{\alpha,i}^\eps -\t X_{\beta,j}^\eps)\big|\Big|\\\leq&C_3 \Big|\frac{1}{N}\sum_{j=1}^N K^\eps(\t X_{\alpha,i}^\eps -\t X_{\beta,j}^\eps)\big|(X_{\alpha,i}^\eps - X_{\beta,j}^\eps)-(\t X_{\alpha,i}^\eps -\t X_{\beta,j}^\eps)\big|\Big|\\
\leq&C_3  \Big|\Big(\frac{1}{N}\sum_{j=1}^N  K^\eps(\t X_{\alpha,i}^\eps -\t X_{\beta,j}^\eps)-K^\eps\ast \t f_{\beta,\eps}(\t X_{\alpha,i}^\eps)\Big)\big|(X_{\alpha,i}^\eps - X_{\beta,j}^\eps)-(\t X_{\alpha,i}^\eps -\t X_{\beta,j}^\eps)\big|\Big|\\&+C_3 \Big|K^\eps\ast \t f_{\beta,\eps}(\t X_{\alpha,i}^\eps)\big|(X_{\alpha,i}^\eps - X_{\beta,j}^\eps)-(\t X_{\alpha,i}^\eps -\t X_{\beta,j}^\eps)\big|\Big|\\
\leq&2C_3  \Big|\frac{1}{N}\sum_{j=1}^N  K^\eps(\t X_{\alpha,i}^\eps -\t X_{\beta,j}^\eps)-K^\eps\ast \t f_{\beta,\eps}(\t X_{\alpha,i}^\eps)\Big|\max_{\alpha,i}\big|X_{\alpha,i}^\eps - \t X_{\alpha,i}^\eps \big|\\&+2C_3  \big\|K^\eps \ast \t f_{\beta,\eps}\big\|_{L^\infty}\max_{\alpha,i}\big|X_{\alpha,i}^\eps - \t X_{\alpha,i}^\eps \big|.
\end{aligned}
$$
Plugging the estimate above into $I_2$ we get
\begin{equation}\label{EI2t}
\begin{aligned}
I_2(t)=&np N^{\lambda }\E\left[\int_0^{t\wedge \tau_{\lambda}}S_{\lambda}^{p-1}(\tau)\max_{\alpha,\beta}\max_{i}I_{2,\alpha,\beta,i}(\tau)\d \tau\right]\\\leq& 2npC_3 \E\bigg[pN^{\lambda }   \int_0^{t\wedge \tau_{\lambda}}S_{\lambda}^{p-1}\max_{\alpha,i}\big|X_{\alpha,i}^\eps - \t X_{\alpha,i}^\eps \big|\\&\qquad\qquad\qquad\times \max_{\alpha ,\beta}\max_{i}\Big|\frac{1}{N}\sum_{j=1}^N  K^\eps(\t X_{\alpha,i}^\eps -\t X_{\beta,j}^\eps)-K^\eps\ast \t f_{\beta,\eps}(\t X_{\alpha,i}^\eps)\Big|\,\d \tau    \bigg]\\&+2npC_3 \E\bigg[pN^{\lambda }   \int_0^{t\wedge \tau_{\lambda}}S_{\lambda}^{p-1}\max_\beta \|K^\eps \ast \t f_{\beta,\eps}\|_{L^\infty}\max_{\alpha,i}\big|X_{\alpha,i}^\eps - \t X_{\alpha,i}^\eps \big|\,\d \tau    \bigg]\\
=\,&2npC_3 \E\bigg[  \int_0^{t\wedge \tau_\lambda}S_\lambda^{p}\max_{\alpha ,\beta}\max_{i}\Big|\frac{1}{N}\sum_{j=1}^N  K^\eps(\t X_{\alpha,i}^\eps -\t X_{\beta,j}^\eps)-K^\eps\ast \t f_{\beta,\eps}(\t X_{\alpha,i}^\eps)\Big| \,\d \tau    \bigg]\\&+2npC_3 \max_{\beta}\|K^\eps \ast \t f_{\beta,\eps}\|_{L^\infty}\int_0^t\E[S_\lambda^p] \,\d \tau.
\end{aligned}
\end{equation}
For the first term on the right-hand side above, we use Young's inequality to obtain
$$
\begin{aligned}
&2npC_3 \E\bigg[  \int_0^{t\wedge \tau_{\lambda}}\max_{\alpha ,\beta}\max_{i}\Big|\frac{1}{N}\sum_{j=1}^N  K^\eps(\t X_{\alpha,i}^\eps -\t X_{\beta,j}^\eps)-K^\eps\ast \t f_{\beta,\eps}(\t X_{\alpha,i}^\eps)\Big|S_{\lambda}^p \,\d \tau    \bigg]\\\leq&\frac{(2npC_3 )^p}{p}\E\bigg[  \int_0^{t\wedge \tau_{\lambda}}\max_{\alpha ,\beta}\max_{i}\Big|\frac{1}{N}\sum_{j=1}^N  K^\eps(\t X_{\alpha,i}^\eps -\t X_{\beta,j}^\eps)-K^\eps\ast \t f_{\beta,\eps}(\t X_{\alpha,i}^\eps)\Big|^p\,\d \tau\bigg]\\&\qquad\qquad\qquad\qquad\qquad\qquad\qquad\qquad\qquad\qquad\qquad+\E\bigg[\frac{p-1}{p}\int_0^{t\wedge \tau_{\lambda}}(S_{\lambda})^{\frac{p^2}{p-1}} \,\d \tau    \bigg]\\\leq&\,
\frac{(2npC_3 )^p}{p}\int_0^t\E\bigg[  \max_{\alpha ,\beta}\max_{i}\Big|\frac{1}{N}\sum_{j=1}^N  K^\eps(\t X_{\alpha,i}^\eps -\t X_{\beta,j}^\eps)-K^\eps\ast \t f_{\beta,\eps}(\t X_{\alpha,i}^\eps)\Big|^p\bigg]\,\d \tau+\int_0^t\E[S_{\lambda}^p] \,\d \tau,   
\end{aligned}
$$
where we used the property $S_\lambda\leq 1$. We then apply the law of large numbers (Lemma \ref{LLN}) with $\|K^\eps\|_{L^\infty}\leq  (4\eps)^{-(s+2)}=(4N^{\ell})^{s+2}$ to the first term on the right-hand side above. Similar to the aforementioned estimate of $I_1(t)$, for some $0\leq\theta_2<1/2$ and any $m_2\in\N$, it holds 
$$
\begin{aligned}
\int_0^t\E\bigg[  \max_{\alpha ,\beta}\max_{i}&\Big|\frac{1}{N}\sum_{j=1}^N  K^\eps(\t X_{\alpha,i}^\eps -\t X_{\beta,j}^\eps)-K^\eps\ast \t f_{\beta,\eps}(\t X_{\alpha,i}^\eps)\Big|^p\bigg]\,\d \tau\\& \leq  C(n,m_2)TN^{(2m_2+p)\ell(s+2)+m_2(2\theta_2-1)+1}+TN^{-p\theta_2}.
\end{aligned}
$$
Since the uniform bound \eqref{uniform in eps} of $\t f_{\eps}$, we have
\begin{equation}\label{K bound}
\begin{aligned}
 \big\|K^\eps\ast \t f_{\beta,\eps}\big\|_{L^\infty}\leq & \Big\| \int_{|x-y|<4\eps} K^\eps(x-y)\t f_{\beta,\eps}(y)\,\d y\Big\|_{L^\infty}\\
 &+\Big\| \int_{4\eps\leq|x-y|<1} K^\eps(x-y)\t f_{\beta,\eps}(y)\,\d y\Big\|_{L^\infty}+\Big\| \int_{|x-y|\geq 1} K^\eps(x-y)\t f_{\beta,\eps}(y)\,\d y\Big\|_{L^\infty}\\
 \leq & \frac{\|\t f_{\beta,\eps}\|_{L^\infty}}{(4\eps)^{s+2}}|B_{4\eps}|+\|\t f_{\beta,\eps}\|_{L^\infty}\int_{4\eps\leq|x-y|<1} \frac{1}{|x-y|^{s+2}}\,\d y+\|\t f_{\beta,\eps}\|_{L^1}.
\end{aligned}
\end{equation}
For the sub-Coulomb case, it holds by \eqref{uniform in eps} that
\begin{equation}\label{bound K subcoulomb}
\big\|K^\eps\ast \t f_{\beta,\eps}\big\|_{L^\infty}\leq C(d)\sup_\eps\|\t f_{\beta,\eps}\|_{L^\infty(0,T;L^1\cap L^\infty)}<C(T,d),
\end{equation}
where the constant is independent of $\eps$.
While for the critical Coulomb case ($s=d-2$), the following bound holds
$$
\int_{4\eps\leq|x-y|<1} \frac{1}{|x-y|^{d}}\,\d y\leq C(d) \log\frac{1}{\eps}\leq C(d)\ell\log N,
$$
which implies that 
\begin{equation}\label{bound K coulomb}
\big\|K^\eps\ast \t f_{\beta,\eps}\big\|_{L^\infty}\leq C(d)\sup_\eps\|\t f_{\beta,\eps}\|_{L^\infty(0,T;L^1\cap L^\infty)}\ell\log N<C(T,d)\ell\log N,
\end{equation}
where the constant is independent of $\eps$. Then we get the following  controls for \eqref{EI2t}: for the Coulomb case $s=d-2$,
\begin{equation}\label{eq I_2 coulomb}
\begin{aligned}
I_2(t)\leq& C(m_2,n,p,T)N^{(2m_2+p)\ell(s+2)+m_2(2\theta_2-1)+1}+C(n,p,T)N^{-p\theta_2}+C_4p\ell\log N\int_0^t\E[S_\lambda^p(\tau)] \,\d \tau;
\end{aligned}
\end{equation}
for the sub-Coulomb case $s<d-2$,
\begin{equation}\label{eq I_2 subcoulomb}
\begin{aligned}
I_2(t)\leq& C(m_2,n,p,T)N^{(2m_2+p)\ell(s+2)+m_2(2\theta_2-1)+1}+C(n,p,T)N^{-p\theta_2}+C_4p\int_0^t\E[S_\lambda^p(\tau)] \,\d \tau,
\end{aligned}
\end{equation}
where the constant 
$$C_4=2npC_3C(T,d)=C_4(T,d,\chi,n,\max_{\alpha,\beta}|a_{\alpha\beta}|,\max_\alpha\sup_\eps\|\t f_{\eps,\alpha}\|_{L^\infty(0,T;L^1\cap L^\infty)}).$$
Combining the estimate of $I_1(t)$ satisfied by \eqref{eq I_1} and estimate of $I_2(t)$ satisfied by \eqref{eq I_2 coulomb} or \eqref{eq I_2 subcoulomb}, we conclude that for the Coulomb case $s=d-2$, \eqref{I1I2} can be controlled as
\begin{equation}\label{ESN coulomb}
\begin{aligned}
\E\big[S_\lambda^p(t)\big]\leq& C(n,p,T)N^{p(\lambda -\theta_1)}+C(m_1,n,p,T)N^{m_1(\ell (2s+2)+2\theta_1-1)+p(\ell (s+1)+\lambda )+1}+C(n,p,T)N^{-p\theta_2}\\&+C(m_2,n,p,T)N^{m_2(\ell(2s+4)+2\theta_2-1)+p(\ell(s+2))+1}+C_4p\ell\log N\int_0^{t} \E\big[S_\lambda^p(\tau)\big]\,\d \tau;
\end{aligned}
\end{equation}
and for the sub-Coulomb case $s<d-2$, \eqref{I1I2} can be controlled as
\begin{align}\label{ESN subcoulomb}
\E\big[S_\lambda^p(t)\big]\leq& C(n,p,T)N^{p(\lambda -\theta_1)}+C(m_1,n,p,T)N^{m_1(\ell (2s+2)+2\theta_1-1)+p(\ell (s+1)+\lambda )+1}+C(n,p,T)N^{-p\theta_2}\nonumber
\\&+C(m_2,n,p,T)N^{m_2(\ell(2s+4)+2\theta_2-1)+p(\ell(s+2))+1}+C_4p\int_0^{t} \E\big[S_\lambda^p(\tau)\big]\,\d \tau.
\end{align}
Notice that the only difference between \eqref{ESN coulomb} and \eqref{ESN subcoulomb} is the prefactor of the last integral.

For any given $\gamma'>0$, our aim to conclude the proof is to choose suitable $(\theta_1, \theta_2,p,m_1,m_2)$ in order such that the sum of the first four terms on the right-hand side of both \eqref{ESN coulomb} and \eqref{ESN subcoulomb} can be controlled by $C(m_1,m_2,n,p,T)N^{-\gamma'}$ for any large enough $N$. Then, we require for all $0<s\leq d-2$ that 
\begin{itemize}
\item[(1)] $p(\lambda-\theta_1)\leq -\gamma' $; 
\item[(2)]$m_1(\ell (2s+2)+2\theta_1-1)+p(\ell (s+1)+\lambda )+1\leq -\gamma'$;
\item[(3)]$-p\theta_2\leq-\gamma'$; 
\item[(4)]$m_2(\ell(2s+4)+2\theta_2-1)+p(\ell(s+2))\leq-\gamma'$; 
\item[(5)]$\lambda>\ell$.
\end{itemize}
where the last constraint comes from \eqref{double difference}.
We choose first $(\theta_1, \theta_2)$ such that
$$
0<\ell<\lambda<\theta_1 <\frac{1}{2}-\ell(s+1), \quad 0\leq\theta_2<\frac{1}{2}-\ell(s+2)
$$
holds. Then we are able to take $p$ large enough such that conditions $(1)$ and $(3)$ are satisfied. We now choose $(m_1,m_2)$ depending on $p$, such that conditions $(2)$ and $(4)$ are satisfied. To make the range of $\lambda$, $\theta_1$ and $\theta_2$ not empty,  the range of $\ell$ needs to be taken as 
\begin{equation*}
0<\ell<\frac{1}{2s+4},
\end{equation*} 
with the corresponding range of $\lambda$ taken as \eqref{range lambda}, i.e.
$\ell<\lambda<1/2-\ell(s+1).$
Actually, we can take $\theta_1$ close enough to $1/2-\ell(s+1)$ and $\lambda$ close enough to $\ell$ such that $\theta_1-\lambda=\theta_2$. A choice of $p$ for all cases is $p=p'$ which satisfies
\begin{equation}\label{p'}
\Big(\frac{1}{2}-\ell(s+2)\Big)p'-\varrho'=\gamma'.
\end{equation}
for arbitrarily small  $\varrho'$. It is also admissible for us to take any $p\geq p'$ in order to satisfy (1) and (3).  

In the sub-Coulomb case, we can then take for any $\gamma>0$, $\gamma'=\gamma$, and estimate the four terms in \eqref{ESN subcoulomb} by the power $N^{-\gamma}$ to conclude 
$$
\E\big[S_{\lambda}^p(t)\big]\leq C(m_1,m_2,p,T) N^{-\gamma}+C_4p\int_0^{t} \E\big[S_{\lambda}^p(\tau)\big]\,\d \tau,
$$
which implies by Gronwall's inequality that  
$$
\sup_{t\in[0,T]}\E [S_{\lambda}^p(t)]\leq \frac{C(p,T)}{N^\gamma}.
$$ 

Now, we turn to the Coulomb case by rewriting \eqref{ESN coulomb} as 
$$
\E\big[S_{\lambda}^p(t)\big]\leq C(m_1,m_2,n,p,T) N^{-(\frac{1}{2}-\ell(s+2))p+\varrho'}+C_4p\ell\log N\int_0^{t} \E\big[S_{\lambda}^p(\tau)\big]\,\d \tau,
$$
which implies by Gronwall's inequality that  
$$
\sup_{t\in[0,T]}\E [S_{\lambda}^p(t)]\leq C(m_1,m_2,n,p,T)\frac{e^{C_4Tp\ell \log N}}{N^{(\frac{1}{2}-\ell(s+2))p-\varrho'}}\leq C(m_1,m_2,n,p,T)N^{-\gamma},
$$
where 
\begin{equation}\label{constraint for coulomb}
\gamma=\gamma'-C_4Tp\ell\leq\left(\frac{1}{2}-\ell(C_4T+d)\right)p-\varrho'.\end{equation}
It requires further that 
$$
0<\ell<\frac{1}{2(C_4T+d)}
$$
In order to satisfy (1), (3) and $\eqref{constraint for coulomb}$, $p$ should not only satisfy $p\geq p'$ defined as \eqref{p'} but also $p\geq p''$ which satisfies
\begin{equation}\label{p''}
\Big(\frac{1}{2}-\ell (C_4T+d)\Big)p''-\varrho'=\gamma.
\end{equation}
Combining \eqref{p'} and  \eqref{p''}, we can take  $p=\max\{p',p''\}$ as one of the admissible choices. 

In conclusion, if we assume that the range of $\ell$ satisfies \eqref{range l}, 
$$
\left\{\begin{array}{cc}
    &\displaystyle0<\ell<\min\left(\frac{1} {C_0},\frac1{2d}\right),\quad\text{when } s=d-2\quad \text{(Coulomb case)},\\&
 \\&\displaystyle0<\ell<\frac{1}{2s+4},\quad\text{when } s<d-2\quad \text{(sub-Coulomb case)},
\end{array}\right.
$$where  $C_0$  is a constant such that $$C_0=2(C_4T+d)=C_0(T,d,n,\max_{\alpha,\beta}|a_{\alpha\beta}|, \|\chi\|_{W^{2,1}\cap W^{2,\infty}},\max_\alpha\sup_\eps\|\t f_{\eps,\alpha}\|_{L^\infty(0,T;L^1\cap L^\infty)}).$$
Then for some $\lambda$ lying in the non-empty set  $$\lambda\in\left(\ell,\frac{1}{2}-\ell (s+1)\right),\quad \text{for } s\leq d-2, $$
we have obtained \eqref{expectation S} holds for any $\gamma>0$, namely
$$
\sup_{t\in[0,T]}\E [S_{\lambda}^p(t)]\leq C(n,T,p,\gamma)N^{-\gamma}.
$$
\end{proof}

\begin{remark}\label{gradient2d}
The bounds in Remark \ref{fractional gradient} hold for the two dimensional Coulomb case $s=0$, and thus the main result of Proposition \ref{convergence in prob} holds in two dimensions for the $V(x)=\log|x|$ interaction potential. 
\end{remark}

\section{Distance between particle system and intermediate PDEs}\label{mean-field limit} 

We investigate the proof of Proposition \ref{RE convergence} in this section. We use the notation 
$$\d \mathbf{X}=\d x_{1,1}\cdots\d x_{1,N}\cdots\d x_{n,1}\cdots\d x_{n,N}$$ as the Lebesgue measure on $\R^{dnN}$. 
The time derivative of the relative entropy between the solution $f_{N,\eps}$ of the Liouville equation \eqref{Liouville} and the solution $\t{f}_{N,\eps}$ of the tensorised intermediate PDE \eqref{tensor intermediate} is as follows 
$$
\begin{aligned}
&\frac{\,\d}{\,\d t}H(f_{N,\eps}|\t{f}_{N,\eps})=\int_{\R^{dnN}}\p_tf_{N,\eps}\log \frac{f_{N,\eps}}{\t{f}_{N,\eps}}\,\d \mathbf{X}-\int_{\R^{dnN}}\p_t\t{f}_{N,\eps}\frac{f_{N,\eps}}{\t{f}_{N,\eps}}\,\d \mathbf{X}\\
=\,&-\sum_{\alpha=1}^n\sum_{i=1}^N\sigma_{\alpha}\int_{\R^{dnN}}f_{N,\eps}\big|\nabla_{x_{\alpha,i}}\log\frac{f_{N,\eps}}{ \t f_{N,\eps}}\big|^2\,\d \mathbf{X}\\
&+\sum_{\alpha,\beta=1}^n\sum_{i=1}^N\int_{\R^{dnN}}f_{N,\eps}\Big[\nabla V_{\alpha\beta}^\eps\ast \t{f}_{\beta,\eps}(x_{\alpha,i})-\frac{1}{N}\sum_{j=1}^N\nabla V_{\alpha\beta}^\eps(x_{\alpha,i}-x_{\beta,j})\Big]\cdot\nabla_{x_{\alpha,i}}\log\frac{f_{N,\eps}}{\t{f}_{N,\eps}}\,\d \mathbf{X},
\end{aligned}
$$
where Cauchy-Schwarz inequality implies $$
\frac{\,\d}{\,\d t}\frac{H(f_{N,\eps}|\t{f}_{N,\eps})}{N}\leq \frac{C}{N}\sum_{\alpha,\beta=1}^n\sum_{i=1}^N\int_{\R^{dnN}}f_{N,\eps}\Big|\nabla V_{\alpha\beta}^\eps\ast \t{f}_{\beta,\eps}(x_{\alpha,i})-\frac{1}{N}\sum_{j=1}^N\nabla V_{\alpha\beta}^\eps(x_{\alpha,i}-x_{\beta,j})\Big|^2\,\d \mathbf{X},
$$
and here we renormalise the relative entropy by dividing by $N$ and the constant $C$ dependent on $\sigma_\alpha$. We rewrite the term on the right-hand side into the expectation form as,
$$
\begin{aligned}
&\frac{1}{N}\sum_{\alpha,\beta=1}^n\sum_{i=1}^N\int_{\R^{dnN}}f_{N,\eps}\big|\nabla V_{\alpha\beta}^\eps\ast \t{f}_{\beta,\eps}(x_{\alpha,i})-\frac{1}{N}\sum_{j=1}^N\nabla V_{\alpha\beta}^\eps(x_{\alpha,i}-x_{\beta,j})\big|^2\,\d \mathbf{X}\\
=\,& \E\Big[\frac{1}{N}\sum_{\alpha,\beta=1}^n\sum_{i=1}^N\big|\nabla V_{\alpha\beta}^\eps\ast \t{f}_{\beta,\eps}\big(X_{\alpha,i}^\eps(t)\big)-\frac{1}{N}\sum_{j=1}^N\nabla V_{\alpha\beta}^\eps\big(X_{\alpha,i}^\eps(t)-X_{\beta,j}^\eps(t)\big)\big|^2\Big]\\
\leq&\, 2\E\Big[\frac{1}{N}\sum_{\alpha,\beta=1}^n\sum_{i=1}^N\big|\nabla V_{\alpha\beta}^\eps\ast \t{f}_{\beta,\eps}\big(X_{\alpha,i}^\eps(t)\big)-\nabla V_{\alpha\beta}^\eps\ast \t{f}_{\beta,\eps}\big(\t{X}_{\alpha,i}^\eps(t)\big)\big|^2\Big]\\
&+ 2\E\Big[\frac{1}{N}\sum_{\alpha,\beta=1}^n\sum_{i=1}^N\big|\nabla V_{\alpha\beta}^\eps\ast \t{f}_{\beta,\eps}\big(\t{X}_{\alpha,i}^\eps(t)\big)-\frac{1}{N}\sum_{j=1}^N\nabla V_{\alpha\beta}^\eps\big(\t{X}_{\alpha,i}^\eps(t)-\t{X}_{\beta,j}^\eps(t)\big)\big|^2\Big]\\
&+ 2\E\Big[\frac{1}{N}\sum_{\alpha,\beta=1}^n\sum_{i=1}^N\big|\frac{1}{N}\sum_{j=1}^N\nabla V_{\alpha\beta}^\eps\big(\t{X}_{\alpha,i}^\eps(t)-\t{X}_{\beta,j}^\eps(t)\big)-\frac{1}{N}\sum_{j=1}^N\nabla V_{\alpha\beta}^\eps\big(X_{\alpha,i}^\eps(t)-X_{\beta,j}^\eps(t)\big)\big|^2\Big]\\
=:& 2\big(J_1(t)+J_2(t)+J_3(t)\big).
\end{aligned}
$$
We will show $J_1$, $J_2$ and $J_3$ converge to 0 with some certain rate as $N\to\infty$. Firstly, we can apply Lemma \ref{LLN} to estimate $J_2$ by a law of large numbers estimate. For some $m_3\in\N$, some $\theta_3\in[0,\frac{1}{2})$ and any $t\in[0,T]$, it holds
\begin{align}\label{J2}
J_2(t)=\,&\E\bigg[\frac{1}{N}\sum_{\alpha,\beta=1}^n\sum_{i=1}^N\bigg|\nabla V_{\alpha\beta}^\eps\ast \t{f}_{\beta,\eps}\big(\t{X}_{\alpha,i}^\eps(t)\big)-\frac{1}{N}\sum_{j=1}^N\nabla V_{\alpha\beta}^\eps\big(\t{X}_{\alpha,i}^\eps(t)-\t{X}_{\beta,j}^\eps(t)\big)\bigg|^2I_{\mathcal{A}^N_{\theta_3, \nabla\mathbf{V}^{\eps}}}\bigg]\nonumber\\
&+\E\bigg[\frac{1}{N}\sum_{\alpha,\beta=1}^n\sum_{i=1}^N\bigg|\nabla V_{\alpha\beta}^\eps\ast \t{f}_{\beta,\eps}\big(\t{X}_{\alpha,i}^\eps(t)\big)-\frac{1}{N}\sum_{j=1}^N\nabla V_{\alpha\beta}^\eps\big(\t{X}_{\alpha,i}^\eps(t)-\t{X}_{\beta,j}^\eps(t)\big)\bigg|^2I_{(\mathcal{A}^N_{\theta_3, \nabla\mathbf{V}^{\eps}})^c}\bigg]\nonumber\\
\leq& C(n,T)\max_{\alpha,\beta}\|\nabla V_{\alpha\beta}^\eps\|_{L^\infty}^2\P(\mathcal{A}^N_{\theta_3, \nabla\mathbf{V}^{\eps}})+C(n)N^{-2\theta_3}\nonumber\\
\leq& C(n,m_3,\max_{\alpha,\beta}|a_{\alpha\beta}|,T)N^{(2 m_3+2)\ell(1+s)} N^{ m_3(2\theta_3-1)+1}+C(n)N^{-2\theta_3},
\end{align}
where we use the bound $\|\nabla V^\eps_{\alpha\beta}\|_{L^\infty}\lesssim N^{\ell(1+s)}$ by Remark \ref{fractional gradient}. To deal with $J_1$, we will use Proposition \ref{convergence in prob} to get
$$
\begin{aligned}
J_1(t)
=\,&\E\bigg[\frac{1}{N}\sum_{\alpha,\beta=1}^n\sum_{i=1}^N\bigg|\nabla V_{\alpha\beta}^\eps\ast \t{f}_{\beta,\eps}\big(X_{\alpha,i}^\eps(t)\big)-\nabla V_{\alpha\beta}^\eps\ast \t{f}_{\beta,\eps}\big(\t{X}_{\alpha,i}^\eps(t)\big)\bigg|^2I_{\C_\lambda(t)}\bigg]\\&+\E\bigg[\frac{1}{N}\sum_{\alpha,\beta=1}^n\sum_{i=1}^N\bigg|\nabla V_{\alpha\beta}^\eps\ast \t{f}_{\beta,\eps}\big(X_{\alpha,i}^\eps(t)\big)-\nabla V_{\alpha\beta}^\eps\ast \t{f}_{\beta,\eps}\big(\t{X}_{\alpha,i}^\eps(t)\big)\bigg|^2I_{\C_\lambda^c(t)}\bigg]\\
\leq & 4\sum_{\alpha,\beta=1}^n\|\nabla V_{\alpha\beta}^\eps\ast \t{f}_{\beta,\eps}(t)\|_{L^\infty}^2\P (\C_\lambda(t))\\&+\frac{1}{N}\sum_{\alpha,\beta=1}^n\sum_{i=1}^N\|\nabla^2 V_{\alpha\beta}^\eps\ast \t{f}_{\beta,\eps}(t)\|^2_{L^\infty}\E\big[|X_{\alpha,i}^\eps(t)-\t{X}_{\alpha,i}^\eps(t)|^2I_{\C_\lambda^c(t)}\big]\\\leq & C(n,T,\gamma)\max_{\alpha,\beta}\|\nabla V_{\alpha\beta}^\eps\ast \t{f}_{\beta,\eps}(t)\|_{L^\infty}^2 N^{-\gamma}+n^2\max_{\alpha,\beta}\|\nabla^2 V_{\alpha\beta}^\eps\ast \t{f}_{\beta,\eps}(t)\|^2_{L^\infty}N^{-2\lambda}.
\end{aligned}
$$
In the last line above, we have for $s\leq d-2$ by the uniform bound \eqref{uniform in eps} that
$$
\begin{aligned}
\sup_{t\in[0,T]}\big\|\nabla V^\eps_{\alpha\beta}\ast \t f_{\beta,\eps}\big\|_{L^\infty}\leq&C(\max_{\alpha,\beta}|a_{\alpha\beta}|)\sup_{t\in[0,T]}\Big(\big\|\frac{1}{|x|^{s+1}}\big|_{B_1}\ast \t{f}_{\beta,\eps}\big\|_{L^\infty}+\big\|\frac{1}{|x|^{s+1}}\big|_{B_1^c}\ast \t{f}_{\beta,\eps}\big\|_{L^\infty}\Big)\\\leq&C(\max_{\alpha,\beta}|a_{\alpha\beta}|,s)\sup_\eps\sup_{t\in[0,T]}\Big(\big\|\t{f}_{\beta,\eps}\big\|_{L^\infty}+\big\|\t{f}_{\beta,\eps}\big\|_{L^1}\Big)\leq C(\max_{\alpha,\beta}|a_{\alpha\beta}|,s,T).
\end{aligned}
$$
While we still can bound the Hessian as above for $s<d-2$:
$$\sup_{t\in[0,T]}\|\nabla^2 V^\eps_{\alpha\beta}\ast \t f_{\beta,\eps}\|_{L^\infty}\leq C(\max_{\alpha,\beta}|a_{\alpha\beta}|,s)\sup_\eps\big\|\t{f}_{\beta,\eps}\big\|_{L^\infty(0,T;L^1\cap L^\infty)}\leq C(\max_{\alpha,\beta}|a_{\alpha\beta}|,s,T),$$  we use directly $\nabla^2 V^\eps_{\alpha\beta}=a_{\alpha\beta}\nabla V\ast \nabla\chi^\eps$ for the case $s=d-2$ to obtain
\begin{align}\label{hessian}
&\sup_{t\in[0,T]}\big\|\nabla^2 V_{\alpha\beta}^\eps\ast \t{f}_{\beta,\eps}\big\|_{L^\infty}=\sup_{t\in[0,T]}\Big\|a_{\alpha\beta}\nabla  V\ast\nabla\chi^\eps\ast \t{f}_{\beta,\eps}\Big\|_{L^\infty}\\
\leq&C(\max_{\alpha,\beta}|a_{\alpha\beta}|)\sup_{t\in[0,T]}\Big(\Big\|\frac{1}{|x|^{d-1}}\big|_{B_1}\ast|\nabla\chi^\eps|\ast \t{f}_{\beta,\eps}\Big\|_{L^\infty}+\Big\|\frac{1}{|x|^{d-1}}\big|_{B_1^c}\ast|\nabla\chi^\eps|\ast \t{f}_{\beta,\eps}\Big\|_{L^\infty}\Big)\nonumber\\
\leq &C(\max_{\alpha,\beta}|a_{\alpha\beta}|)\sup_{t\in[0,T]}\Big(\big\||\nabla\chi^\eps|\ast \t{f}_{\beta,\eps}\big\|_{L^\infty}+\big\||\nabla\chi^\eps|\ast \t{f}_{\beta,\eps}\big\|_{L^1}\Big)\nonumber\\
\leq& \frac{C(\max_{\alpha,\beta}|a_{\alpha\beta}|,\|\chi\|_{W^{1,1}\cap W^{1,\infty} })}{\eps} \sup_\eps\big\|\t{f}_{\beta,\eps}\big\|_{L^\infty(0,T;L^1\cap L^\infty)}\nonumber\\
\leq  &C(\max_{\alpha,\beta}|a_{\alpha\beta}|,\|\chi\|_{W^{1,1}\cap W^{1,\infty} },T)N^\ell.\nonumber
\end{align}
Thus we conclude that
\begin{equation}\label{J1}
\sup_{t\in[0,T]}J_1(t)
\leq C(n,T,\gamma)  N^{-\gamma}+C(n,T)N^{2\ell-2\lambda}.
\end{equation}
\begin{remark}\label{higher regularity}
We are able to obtain $J_1\lesssim N^{-\gamma}+N^{-2\lambda}$ in the case $s<d-2$. And notice that if $\t{f}_{\beta,\eps}$ admits higher regularity as $W^{1,1}\cap W^{1,\infty}$, then we can put one gradient on $\t{f}_{\beta,\eps}$ \eqref{hessian} when $s=d-2$ to obtain $J_1\lesssim N^{-\gamma}+N^{-2\lambda}$.\end{remark}
For $J_3$ we again split in $\omega\in \C_\lambda$ and $\omega\in \C_\lambda^c$, such as
$$
\begin{aligned}
J_3(t)=\,&\E\bigg[\frac{1}{N}\sum_{\alpha,\beta=1}^n\sum_{i=1}^N\bigg|\frac{1}{N}\sum_{j=1}^N\nabla V_{\alpha\beta}^\eps\big(\t{X}_{\alpha,i}^\eps(t)-\t{X}_{\beta,j}^\eps(t)\big)-\frac{1}{N}\sum_{j=1}^N\nabla V_{\alpha\beta}^\eps\big(X_{\alpha,i}^\eps(t)-X_{\beta,j}^\eps(t)\big)\bigg|^2\bigg]\\
=\,&\E\bigg[\frac{1}{N}\sum_{\alpha,\beta=1}^n\sum_{i=1}^N\bigg|\frac{1}{N}\sum_{j=1}^N\nabla V_{\alpha\beta}^\eps\big(\t{X}_{\alpha,i}^\eps(t)-\t{X}_{\beta,j}^\eps(t)\big)-\frac{1}{N}\sum_{j=1}^N\nabla V_{\alpha\beta}^\eps\big(X_{\alpha,i}^\eps(t)-X_{\beta,j}^\eps(t)\big)\bigg|^2I_{\C_{\lambda}(t)}\bigg]\\+&\E\bigg[\frac{1}{N}\sum_{\alpha,\beta=1}^n\sum_{i=1}^N\bigg|\frac{1}{N}\sum_{j=1}^N\nabla V_{\alpha\beta}^\eps\big(\t{X}_{\alpha,i}^\eps(t)-\t{X}_{\beta,j}^\eps(t)\big)-\frac{1}{N}\sum_{j=1}^N\nabla V_{\alpha\beta}^\eps\big(X_{\alpha,i}^\eps(t)-X_{\beta,j}^\eps(t)\big)\bigg|^2I_{\C_{\lambda}^c(t)}\bigg]\\=:&J_{31}(t)+J_{32}(t).
\end{aligned}
$$
The first term $J_{31}$ can be estimated by the probability of the set $C_\lambda$ which is given by Proposition \ref{convergence in prob} in the following way
\begin{equation}\label{J31}
\begin{aligned}&\E\bigg[\frac{1}{N}\sum_{\alpha,\beta=1}^n\sum_{i=1}^N\bigg|\frac{1}{N}\sum_{j=1}^N\nabla V_{\alpha\beta}^\eps\big(\t{X}_{\alpha,i}^\eps(t)-\t{X}_{\beta,j}^\eps(t)\big)-\frac{1}{N}\sum_{j=1}^N\nabla V_{\alpha\beta}^\eps\big(X_{\alpha,i}^\eps(t)-X_{\beta,j}^\eps(t)\big)\bigg|^2I_{\C_{\lambda}(t)}\bigg]\\\leq & 4n^2
\max_{\alpha,\beta}\|\nabla V_{\alpha\beta}^\eps\|_{L^\infty}^2\P (\C_{\lambda}(t))\leq C(\max_{\alpha,\beta}|a_{\alpha\beta}|,n,\gamma,T) N^{2\ell(s+1)-\gamma}.
\end{aligned}
\end{equation}
In order to estimate $J_{32}$, we first remark that under the set $\C_{\lambda}^c(t)$, the assumption $\lambda>\ell$ implies that
$$
\big|(X_{\alpha,i}^\eps - X_{\beta,j}^\eps)-(\t X_{\alpha,i}^\eps -\t X_{\beta,j}^\eps)\big|(t)\leq 2\max_{\alpha,i}\big|X_{\alpha,i}^\eps - \t X_{\alpha,i}^\eps \big|(t)\leq 2N^{-\lambda}< 2N^{-\ell}= 2\eps.
$$
This allows us to use the auxiliary function $K^\eps$ defined in \eqref{auxiliary} to obtain the following estimate  
$$
\begin{aligned}
J_{32}&\leq C_3 \E\bigg[\frac{1}{N}\sum_{\alpha,\beta=1}^n\sum_{i=1}^N\Big|\frac{1}{N}\sum_{j=1}^N  K^\eps(\t X_{\alpha,i}^\eps -\t X_{\beta,j}^\eps)\left|(X_{\alpha,i}^\eps - X_{\beta,j}^\eps)-(\t X_{\alpha,i}^\eps -\t X_{\beta,j}^\eps)\right|\Big|^2I_{\C_{\lambda}^c}\bigg]\\\leq& C_3n^2\E\bigg[\max_{\alpha ,\beta,i}\Big|\Big(\frac{1}{N}\sum_{j=1}^N  K^\eps(\t X_{\alpha,i}^\eps -\t X_{\beta,j}^\eps)-K^\eps\ast \t f_{\beta,\eps}(\t X_{\alpha,i}^\eps)\Big)\big|(X_{\alpha,i}^\eps - X_{\beta,j}^\eps)-(\t X_{\alpha,i}^\eps -\t X_{\beta,j}^\eps)\big|\Big|^2I_{\C_{\lambda}^c}\bigg]\\&+C_3n^2\E\bigg[\max_{\alpha ,\beta,i}\Big|K^\eps\ast \t f_{\beta,\eps}(\t X_{\alpha,i}^\eps)\big|(X_{\alpha,i}^\eps - X_{\beta,j}^\eps)-(\t X_{\alpha,i}^\eps -\t X_{\beta,j}^\eps)\big|\Big|^2I_{\C_{\lambda}^c}\bigg]\\
\leq& 4C_3n^2N^{-2\lambda}\bigg(\E\bigg[\max_{\alpha ,\beta}\max_{i}\Big|\frac{1}{N}\sum_{j=1}^N  K^\eps(\t X_{\alpha,i}^\eps -\t X_{\beta,j}^\eps)-K^\eps\ast \t f_{\beta,\eps}(\t X_{\alpha,i}^\eps)\Big|^2\bigg]+\max_\beta \|K^\eps \ast \t f_{\beta,\eps}\|_{L^\infty}^2\bigg),    
\end{aligned}
$$
where the constant $C_3 =C_3 (d,s,\|\chi\|_{W^{2,1}\cap W^{2,\infty} },
\max_{\alpha,\beta}|a_{\alpha\beta}|)$ is the same as in \eqref{ineq C3}.  
Recalling the bound $\|K^\eps\|_{L^\infty}\leq  (4N^{\ell})^{s+2}$, we apply Lemma \ref{LLN} (the law of large numbers) with some $0\leq\theta_4<1/2$ and any $m_4\in\N$ to obtain 
$$
\begin{aligned}
&\E\bigg[\frac{1}{N}\sum_{\alpha,\beta=1}^n\sum_{i=1}^N\bigg|\frac{1}{N}\sum_{j=1}^N\nabla V_{\alpha\beta}^\eps\big(\t{X}_{\alpha,i}^\eps(t)-\t{X}_{\beta,j}^\eps(t)\big)-\frac{1}{N}\sum_{j=1}^N\nabla V_{\alpha\beta}^\eps\big(X_{\alpha,i}^\eps(t)-X_{\beta,j}^\eps(t)\big)\bigg|^2I_{\C_{\lambda} ^c(t)}\bigg]\\
\leq &C(n,m_4,T)N^{-2 \lambda }N^{(2m_4+2)\ell(s+2)+m_4(2\theta_4-1)+1}+C(n)N^{-2 \lambda }N^{-2\theta_4}+C (T,\ell,n)(1+\kappa\log N)^2N^{-2\lambda},
\end{aligned}
$$
where the last term is due to the bound \eqref{bound K subcoulomb} and \eqref{bound K coulomb}  which covers the cases $s< d-2$ $(\kappa=0)$ and $s=d-2$ $(\kappa=1)$. We remark that all the constants above also depend on the matrix $(a_{\alpha\beta})_{\alpha, \beta =1,\ldots, n}$ and the dimension $d$, which is omitted from now on to simplify the presentation. 

Together with \eqref{J31}, we infer that
\begin{equation}\label{J3}
\begin{aligned}
\sup_{t\in[0,T]}J_3(t)\leq& C(n,\gamma,T)N^{2\ell(s+1)-\gamma}+C(n,m_4,T)N^{-2 \lambda +m_4(\ell(2s+4)+2\theta_4-1)+2\ell(s+2)+1}\\&+C(n,T)N^{-2 \lambda -2\theta_4}+C(T,\ell,n)(1+\kappa\log N)^2N^{-2\lambda}.
\end{aligned}
\end{equation}
Combining the estimate \eqref{J1}, \eqref{J2} and \eqref{J3}, we come to 
\begin{equation}\label{HN}
\begin{aligned}
&\frac{\,\d}{\,\d t}\frac{H(f_{N,\eps}|\t{f}_{N,\eps})}{N}\leq  C\sup_{t\in[0,T]}\big(J_1(t)+J_2(t)+J_3(t)\big)
\leq C(n,T,\gamma)N^{-\gamma}+C(n,T)N^{2\ell-2\lambda} \\
&+ C(n,m_3,T)N^{m_3(\ell(2s+2)+2\theta_3-1)+\ell(2s+2)+1} +C(n)N^{-2\theta_3}+C(n,\gamma,T)N^{2\ell(s+1)-\gamma}\\&+C(n,m_4,T)N^{-2 \lambda +m_4(\ell(2s+4)+2\theta_4-1)+\ell(2s+4)+1}+C(n,T)N^{-2 \lambda -2\theta_4}\\&+C(T,\ell,n)(1+\kappa\log N)^2N^{-2\lambda}.
\end{aligned}
\end{equation}
If we assume $\theta_3$ and $\theta_4$ satisfy 
$$
0<\theta_3<\frac{1}{2}-\ell (s+1),\quad 0\leq\theta_4<\frac{1}{2}-\ell (s+2),$$
then we can take  $(m_3,m_4,\gamma)$ large enough to make the first, third, fifth and sixth terms in \eqref{HN} converge to 0 arbitrarily algebraically fast. In the view of the forth term $N^{-2\theta_3}$ above, we observe that by choosing $\theta_3$ close to $1/2-\ell (s+1)$, the slowest convergence rate in \eqref{HN} is $N^{2\ell - 2\lambda}$, which is indeed converging to zero due to the constraint \eqref{range lambda}, i.e.
$$\ell< \lambda<\frac{1}{2}-\ell (s+1).$$ 
Hence, we finally obtain  
\begin{align}\label{d_dtH}
    \frac{\,\d}{\,\d t}\frac{H(f_{N,\eps}|\t{f}_{N,\eps})}{N}\leq C(n,T,m_3,m_4,\ell,\gamma) N^{2\ell-2\lambda}.
\end{align}
In order to optimise the above  convergence rate, we take $\lambda>0$ close enough to $1/2-\ell(s+1)$.  Thus, for some $\varrho>0$ arbitrarily small, the relative entropy can be controlled by
$$
\sup_{t\in[0,T]}\frac{H\big(f_{N,\eps}(t)|\t{f}_{N,\eps}(t)\big)}{N}\leq \frac{C(T)}{N^{1-\ell(2s+4)-\varrho}},
$$
which concludes the proof of  Proposition \ref{RE convergence}. We remark that the constant above depends on $n$ and the choice of $(\ell, m_3, m_4, \gamma)$.
\begin{remark}[Improved convergence rate]\label{higher regularity 2}
Under the assumptions of Remark \ref{higher regularity}, i.e. $s< d-2$ or $\t{f}_{N,\eps}$ has higher order regularity, the leading order term in \eqref{HN}, which is arising from $J_1$, is no longer present. Indeed, under those assumptions, by choosing $\theta_3$ and $\lambda$ both close to $1/2-\ell(s+1)$ as well as taking $(m_3,m_4,\gamma)$ large enough, we can improve \eqref{d_dtH} by
$$
\frac{\,\d}{\,\d t}\frac{H(f_{N,\eps}|\t{f}_{N,\eps})}{N}\lesssim N^{-2\theta_3}+N^{-2\lambda}+(1+\kappa\log N)^2N^{-2\lambda},
$$
which finally leads to $$
\sup_{t\in[0,T]}\frac{H\big(f_{N,\eps}(t)|\t{f}_{N,\eps}(t)\big)}{N}\leq \frac{C(T)}{N^{1-\ell(2s+2)-\varrho}},
$$ 
for some $\varrho>0$ arbitrarily small.
\end{remark}

\section{Distance between intermediate PDEs and the limiting system}\label{PDE distance}
We present the proof of Proposition \ref{second step} in this section which follows by standard arguments, i.e. separating in short and long range of the involved potentials. For the reader's convenience we present the proof. Recall our limiting and intermediate PDE systems \eqref{aggregation-diffusion} and \eqref{intermediate} i.e.,
$$
\p_t \bar f_\alpha=\sum_{\beta=1}^{n}a_{\alpha\beta}\div\big(\bar f_\alpha\nabla V\ast \bar f_\beta\big)+\sigma_{\alpha}\Delta  \bar f_\alpha,
$$
and
$$
\p_t \t f_{\alpha,\eps}  = \sum_{\beta =1}^n\div\big(\t f_{\alpha,\eps}  \nabla V_{\alpha\beta}^\eps \ast \t f_{\beta,\eps}\big) +\sigma_{\alpha} \Delta \t f_{\alpha,\eps}.
$$
As assumed in Theorem \ref{POC} as well as Proposition \ref{second step}, the  solutions of \eqref{aggregation-diffusion} and \eqref{intermediate} satisfy that
$$
\bar f_\alpha \in L^\infty(0,T;L^1\cap L^\infty)\cap L^2(0,T;H^1),\quad \alpha=1,\ldots,n
$$
and
$$
\t f_{\alpha,\eps} \in L^\infty(0,T;L^1\cap L^\infty)\cap L^2(0,T;H^1),\quad \alpha=1,\ldots,n
$$
where the norm 
$ 
\|\t f_{\alpha,\eps}\|_{L^\infty(0,T;L^1\cap L^\infty)\cap L^2(0,T;H^1)}
$ is uniformly in $\eps$.

\begin{proof}[Proof of Proposition \ref{second step} ]
Let $h_\alpha:=\t f_{\alpha,\eps}-\bar f_\alpha$, which satisfies
\begin{equation}\label{difference}
\begin{aligned}
\p_t h_\alpha=\,&\sigma_\alpha \Delta h_\alpha+ \sum_{\beta=1}^n \div \Big(h_\alpha\big(\nabla V_{\alpha\beta}^\eps \ast \t f_{\beta,\eps}\big)+\bar f_\alpha \big(\nabla V^\eps_{\alpha\beta}\ast  h_\beta \big)+\bar f_\alpha \big(\nabla V^\eps_{\alpha\beta} \ast \bar f_\beta-a_{\alpha\beta}\nabla V\ast \bar f_\beta\big)\Big).  
\end{aligned}
\end{equation}
Multiplying \eqref{difference} by $h_\alpha$ itself and then integrating with respect to the spacial variable, we get 
\begin{equation}\label{I,II,III}
\frac{1}{2}\frac{\,\d }{\,\d t}\|h_\alpha\|_{L^2(\R^d)}^2+\sigma_\alpha\|\nabla h_\alpha\|_{L^2(\R^d)}^2=I+II+III,
\end{equation}
where 
$$
I=-\sum_{\beta=1}^n \int_{\R^d} h_\alpha\nabla h_\alpha \cdot \nabla V_{\alpha\beta}^\eps \ast \t f_{\beta,\eps}\,\d x,
$$
$$
II=-\sum_{\beta=1}^n \int_{\R^d} \bar f_\alpha\nabla h_\alpha \cdot \nabla V_{\alpha\beta}^\eps \ast h_{\beta}\,\d x,
$$
and
$$
III=-\sum_{\beta=1}^n \int_{\R^d} \bar f_\alpha\nabla h_\alpha \cdot \big(\nabla V^\eps_{\alpha\beta} \ast \bar f_\beta-a_{\alpha\beta}\nabla V\ast \bar f_\beta\big)\,\d x.
$$
We estimate $I,II$ and $III$ separately. For $I$, it holds by assumption \eqref{uniform in eps} that 
\begin{equation}\label{ineq I}
\begin{aligned}
|I|\leq& \sum_{\beta=1}^n \|\nabla h_\alpha\|_{L^2}\| h_\alpha\|_{L^2}\|\nabla V_{\alpha\beta}^\eps \ast \t f_{\beta,\eps}\|_{L^\infty}\\\leq& \max_{\alpha,\beta} |a_{\alpha\beta}|\|\nabla h_\alpha\|_{L^2}\| h_\alpha\|_{L^2}\sum_{\beta=1}^n\|\nabla V\ast \t f_{\beta,\eps}\|_{L^\infty}\\\leq&C(\max_{\alpha,\beta}|a_{\alpha\beta}|,n) \|\nabla h_\alpha\|_{L^2}\| h_\alpha\|_{L^2}\max_\beta\sup_\eps\big(\|\t f_{\beta,\eps}\|_{L^\infty}+\|
\t f_{\beta,\eps}\|_{L^1}\big)\\
\leq& C(\max_{\alpha,\beta}|a_{\alpha\beta}|,n,T) \|\nabla h_\alpha\|_{L^2}\| h_\alpha\|_{L^2}.
\end{aligned}
\end{equation}
The term $II$ satisfies
\begin{equation}\label{ineq II}
\begin{aligned}
|II|\leq&\,\sum_{\beta=1}^n \|\nabla h_\alpha\|_{L^2}\|\bar f_\alpha\nabla V^\eps_{\alpha\beta} \ast h_\beta\|_{L^2}\\\leq&\,  \sum_{\beta=1}^n \|\nabla h_\alpha\|_{L^2}\Big(\|\bar f_\alpha\nabla V^\eps_{\alpha\beta}|_{B_1} \ast h_\beta\|_{L^2}+\|\bar f_\alpha\nabla V^\eps_{\alpha\beta}|_{B_1^c} \ast h_\beta\|_{L^2} \Big)
\\\leq&\max_{\alpha,\beta}|a_{\alpha\beta}|  \sum_{\beta=1}^n \|\nabla h_\alpha\|_{L^2}\Big(\|\bar f_\alpha\|_{L^\infty}\|\nabla V|_{B_1}\ast\chi^\eps \ast h_\beta\|_{L^2}+\|\bar f_\alpha\|_{L^2}\|\nabla V|_{B_1^c}\ast\chi^\eps \ast h_\beta\|_{L^\infty}\Big) 
\\
\leq&\max_{\alpha,\beta}|a_{\alpha\beta}|\sum_{\beta=1}^n \|\nabla h_\alpha\|_{L^2}\Big(\|\bar f_\alpha\|_{L^\infty}\|\nabla V|_{B_1}\ast\chi^\eps\|_{L^1}\|  h_\beta\|_{L^2}+\|\bar f_\alpha\|_{L^2}\|\nabla V|_{B_1^c}\ast\chi^\eps \|_{L^\infty}\| h_\beta\|_{L^1}\Big)\\
\leq &C(\max_{\alpha,\beta}|a_{\alpha\beta}|,T)\|\nabla h_\alpha\|_{L^2}\sum_{\beta=1}^n(\| h_\beta\|_{L^2}+\| h_\beta\|_{L^1}),
\end{aligned}
\end{equation}
where we used in the last step the interpolation $$\sup_{t\in[0,T]}\|\bar f_\alpha\|_{L^2}\leq \sup_{t\in[0,T]}(\|\bar f_\alpha\|_{L^1}\|\bar f_\alpha\|_{L^\infty})^{1/2}\leq C(T).$$ 
Let us estimate the third term $III$ now. When $s<d-2$, the $L^\infty$-norm $\|\nabla^2 V \ast \bar f_\beta\|_{L^\infty}$ is bounded for almost any $t\in[0,T]$. In that case, the term $III$ then satisfies
\begin{equation}\label{ineq III sub}
\begin{aligned}
|III|\leq &\sum_{\beta=1}^n \|\nabla h_\alpha\|_{L^2}\| \bar f_\alpha\|_{L^2}\|\nabla V^\eps_{\alpha\beta} \ast \bar f_\beta-a_{\alpha\beta}\nabla V\ast \bar f_\beta\|_{L^\infty}\\
\leq &\max_{\alpha,\beta}|a_{\alpha\beta}| \sum_{\beta=1}^n \|\nabla h_\alpha\|_{L^2}\| \bar f_\alpha\|_{L^2}\Big\|\int_{\R^d}\chi^\eps(y)\big(\nabla V \ast \bar f_\beta(x-y)-\nabla V\ast \bar f_\beta(x)\big)\,\d y\Big\|_{L^\infty}
\\
\leq &\max_{\alpha,\beta}|a_{\alpha\beta}|\sum_{\beta=1}^n \|\nabla h_\alpha\|_{L^2}\| \bar f_\alpha\|_{L^2}\|\|\nabla^2 V \ast \bar f_\beta\|_{L^\infty}\int_{\R^d}\chi^\eps(y)|y|\,\d y
\\
\leq &C(\max_{\alpha,\beta}|a_{\alpha\beta}|)\sum_{\beta=1}^n \|\nabla h_\alpha\|_{L^2}\| \bar f_\alpha\|_{L^2}\big(\|\bar f_\beta\|_{L^\infty}+\|\bar f_\beta\|_{L^1}\big)\int_{\R^d}\chi^\eps(y)|y|\,\d y\\
\leq &\,\eps nC(\max_{\alpha,\beta}|a_{\alpha\beta}|,T) \|\nabla h_\alpha\|_{L^2}.
\end{aligned}
\end{equation}
When $s=d-2$, we use the decomposition $V=V_{in}+V_{out}:=Vw+V(1-w)$ with smooth function $w$ satisfying
$$
w(x)=\left\{\begin{array}{cc}
    1  &\quad x\in B_1 \\
    0\leq w(x)\leq 1&\quad x\in B_2\cap B_1^c\\
     0 &\quad x\in B_2^c.
\end{array}\right. 
$$
It holds that $\nabla V_{in}|_{B_1}=\nabla (Vw)|_{B_1}=\nabla V|_{B_1}$ and $ \nabla V_{out}|_{B_2^c}=\nabla (V(1-w))|_{B_2^c}=\nabla V|_{B_2^c}$, which implies $\nabla V_{in}\in L^1$ and $\nabla V_{out}\in L^2$. Then, it satisfies
\begin{equation}\label{ineq V V'}
\begin{aligned}
|III|\leq &\sum_{\beta=1}^n \|\nabla h_\alpha\|_{L^2}\big\| \bar f_\alpha\big(a_{\alpha\beta}\nabla V\ast \chi^\eps \ast \bar f_\beta-a_{\alpha\beta}\nabla V\ast \bar f_\beta\big)\big\|_{L^2}\\\leq &\max_{\alpha,\beta}|a_{\alpha\beta}|\sum_{\beta=1}^n \|\nabla h_\alpha\|_{L^2}\big\| \bar f_\alpha\big( V\ast \chi^\eps \ast\nabla \bar f_\beta- V\ast \nabla \bar f_\beta\big)\big\|_{L^2}\\
\leq &\max_{\alpha,\beta}|a_{\alpha\beta}|\sum_{\beta=1}^n \|\nabla h_\alpha\|_{L^2}\Big(\big\| \bar f_\alpha\big(( V_{in}^\eps - V_{in})\ast \nabla \bar f_\beta\big)\big\|_{L^2}+\big\| \bar f_\alpha\big( ( V_{out}^\eps - V_{out})\ast \nabla \bar f_\beta\big)\big\|_{L^2}\Big)\\
\leq &\max_{\alpha,\beta}|a_{\alpha\beta}|\sum_{\beta=1}^n \|\nabla h_\alpha\|_{L^2}\Big(\| \bar f_\alpha\|_{L^\infty}\| V_{in}^\eps - V_{in}\|_{L^1}+\| \bar f_\alpha\|_{L^2}\| V_{out}^\eps - V_{out}\|_{L^2}\Big)\| \nabla \bar f_\beta\|_{L^2},
\end{aligned}
\end{equation}
where we used the notation $V_{in}^\eps:=V_{in}\ast \chi^\eps$ and $V_{out}^\eps:=V_{out}\ast \chi^\eps$.  
The terms which involve $V_{in}$ and $V_{out}$ can be estimated by using Fubini's theorem as follows
\begin{equation}\label{V'}
\begin{aligned}
\| V_{in}^\eps - V_{in}\|_{L^1}\leq &\int_{\R^d} \int_{\R^d} |V_{in}(x-y)-V_{in}(x)|\chi^\eps(y)\,\d y\,\d x\\
= & \int_{\R^d} \int_{\R^d} \left|\int_0^1 \nabla V_{in}(x-(1-\iota) y)\,\d \iota\cdot y\right|\chi^\eps(y)\,\d y\,\d x\\
\leq & \int_{\R^d}  \int_0^1 \int_{\R^d} \left|\nabla V_{in}(x-(1-\iota) y)\right|\,\d x\, \d \iota \,|y|\chi^\eps(y)\,\d y\\
\leq & \, C\eps\|\nabla V_{in}\|_{L^1};
\end{aligned}
\end{equation}
similarly, due to H\"{o}lder's inequality, we have
\begin{equation}\label{V''}
\begin{aligned}
\| V_{out}^\eps - V_{out}\|_{L^2}^2\leq &\int_{\R^d} \int_{\R^d} |V_{out}(x-y)-V_{out}(x)|^2\chi^\eps(y)\,\d y\,\d x\\
\leq & \int_{\R^d}  \int_0^1 \int_{\R^d} \left|\nabla V_{out}(x-(1-\iota) y)\right|^2\,\d x\, \,\d \iota |y|^2\chi^\eps(y)\,\d y\\
\leq &\,C \eps^2\|\nabla V_{out}\|_{L^2}^2.
\end{aligned}
\end{equation}
So, we obtain that for $s=d-2$ 
\begin{equation}\label{ineq III cob}
|III|\leq C(\max_{\alpha,\beta}|a_{\alpha\beta}|,T) \sum_{\beta=1}^n\eps\|\nabla h_\alpha\|_{L^2}\|\nabla \bar f_\beta\|_{L^2}.
\end{equation}
Thus, for $s\leq d-2$, we combine \eqref{ineq III sub} and \eqref{ineq III cob} to obtain
\begin{equation}\label{ineq III}
 |III|\leq \eps C(\max_{\alpha,\beta}|a_{\alpha\beta}|,T) \sum_{\beta=1}^n\|\nabla h_\alpha\|_{L^2}\big(\|\nabla \bar f_\beta\|_{L^2}+1\big).   
\end{equation}
By \eqref{ineq I}, \eqref{ineq II} and \eqref{ineq III}  using Cauchy-Schwarz inequality, it yields for some small $\delta$ such that $\delta\leq\sigma_\alpha$,
$$
\begin{aligned}
&|I|+|II|+|III|\\\leq& \delta \|\nabla h_\alpha\|_{L^2}^2+ C(\delta,\max_{\alpha,\beta}|a_{\alpha\beta}|,n,T) (\eps^2\sum_{\beta=1}^n\big(\|\nabla \bar f_\beta\|_{L^2}^2+1\big)+ \| h_\alpha\|_{L^2}^2+\sum_{\beta=1}^n (\| h_\beta\|_{L^2}^2+\| h_\beta\|_{L^1}^2)).
\end{aligned}
$$
Summing up all species of \eqref{I,II,III}, while omitting the dependence on parameters $\min_\alpha\sigma_\alpha$, $\max_{\alpha,\beta}|a_{\alpha\beta}|$ and $n$, gives us the following estimate
\begin{equation}\label{L2L1}
\begin{aligned}
\frac{\,\d }{\,\d t}\sum_{\alpha=1}^n\|h_\alpha\|_{L^2}^2\leq  &C(T)\Big[\eps^2\sum_{\alpha=1}^n(\|\nabla \bar f_\alpha\|_{L^2}^2+1)+\sum_{\alpha=1}^n(\|h_\alpha\|_{L^2}^2+\|h_\alpha\|_{L^1}^2)\Big]\\\leq  &C(T)\Big[\eps^2\sum_{\alpha=1}^n(\|\nabla \bar f_\alpha\|_{L^2}^2+1)+\sum_{\alpha=1}^n(\|h_\alpha\|_{L^2}^2+H(\t f_{\alpha,\eps}|\bar f_\alpha))\Big], 
\end{aligned}
\end{equation}
where we applied Csiszár-Kullback-Pinsker inequality \eqref{CKP} to control $L^1$-norm by relative entropy.

\medskip

Now we compute the evolution of the relative entropy reads as
$$
\begin{aligned}
&\frac{\,\d}{\,\d t}H(\t f_{\alpha,\eps}|\bar f_\alpha)=  \int_{\R^d}  \p_t\t f_{\alpha,\eps}\log \frac{\t f_{\alpha,\eps}}{\bar f_\alpha}\,\d x-\int_{\R^d} \p_t\bar f_{\alpha}\frac{\t f_{\alpha,\eps}}{\bar f_\alpha}\,\d x\\
=\,&-\sigma_\alpha\int_{\R^d} \t f_{\alpha,\eps}\Big|\nabla\log\frac{\t f_{\alpha,\eps}}{\bar f_\alpha}\Big|^2\,\d x+\sum_{\beta=1}^n \int_{\R^d} \t f_{\alpha,\eps}\nabla\log\frac{\t f_{\alpha,\eps}}{\bar f_{\alpha}}\cdot \big(a_{\alpha\beta}\nabla V\ast \bar f_{\beta}-\nabla V^\eps_{\alpha\beta}\ast \t f_{\beta,\eps}\big)\,\d x,
\end{aligned}
$$
Then Cauchy-Schwarz inequality implies that 
\begin{equation}\label{RE estimate}
\begin{aligned}
&\frac{\,\d}{\,\d t}H(\t f_{\alpha,\eps}|\bar f_\alpha)\leq  C(\sigma_\alpha)\sum_{\beta=1}^n \int_{\R^d} \t f_{\alpha,\eps} \big|a_{\alpha\beta}\nabla V\ast \bar f_{\beta}-\nabla V^\eps_{\alpha\beta}\ast \t f_{\beta,\eps}\big|^2\,\d x\\
\leq&C(\sigma_\alpha,\max_{\alpha,\beta}|a_{\alpha\beta}|)\sum_{\beta=1}^n \int_{\R^d} \t f_{\alpha,\eps} \big|\nabla V\ast \bar f_{\beta}-\nabla V\ast\t f_{\beta,\eps}+\nabla V\ast\t f_{\beta,\eps}-\nabla V\ast \chi^\eps\ast \t f_{\beta,\eps}\big|^2\,\d x\\
\leq&C(\sigma_\alpha,\max_{\alpha,\beta}|a_{\alpha\beta}|)\sum_{\beta=1}^n \bigg[\int_{\R^d} \t f_{\alpha,\eps} \big|\nabla V\ast\t f_{\beta,\eps}-\nabla V\ast \chi^\eps\ast \t f_{\beta,\eps}\big|^2\d x\\&\qquad\qquad\qquad\qquad\qquad\qquad\qquad+\int_{\R^d} \t f_{\alpha,\eps} \big|\nabla V\ast \bar f_{\beta}-\nabla V\ast\t f_{\beta,\eps}|^2\,\d x\bigg].
\end{aligned}
\end{equation}
We notice that, when $s<d-2$, it holds 
$$
\begin{aligned}
&\int_{\R^d} \t f_{\alpha,\eps}(x)\Big|\int_{\R^d}\chi^\eps(y)\big(\nabla V \ast \t f_{\beta,\eps}(x-y)-\nabla V\ast \t f_{\beta,\eps}(x)\big)\,\d y\Big|^2    \,\d x\\
\leq &\int_{\R^d} \t f_{\alpha,\eps}(x)\int_{\R^d}\chi^\eps(y)\big|\nabla V \ast \t f_{\beta,\eps}(x-y)-\nabla V\ast \t f_{\beta,\eps}(x)\big|^2\,\d y    \,\,\d x\\
\leq & \|\nabla^2 V^\eps\ast \t f_{\beta,\eps}\|_{L^\infty}^2\int_{\R^d} \chi^\eps(y)|y|^2\,\d y\leq C(T) \eps^2.
\end{aligned}
$$
While for the case $s=d-2$, we use the same decomposition $V=V_{in}+V_{out}$ as in \eqref{ineq V V'} again to obtain
$$
\begin{aligned}
&\sum_{\beta=1}^n \int_{\R^d} \t f_{\alpha,\eps} \big|\nabla V\ast\t f_{\beta,\eps}-\nabla V\ast \chi^\eps\ast \t f_{\beta,\eps}\big|^2\d x \\
\leq &C\sum_{\beta=1}^n \bigg[\int_{\R^d} \t f_{\alpha,\eps} \big|( V_{in}^\eps - V_{in})\ast \nabla \t f_{\beta,\eps}\big|^2\d x+ \int_{\R^d} \t f_{\alpha,\eps} \big|( V_{out}^\eps - V_{out})\ast \nabla \t f_{\beta,\eps}\big|^2\d x\bigg]\\
\leq &C\sum_{\beta=1}^n \bigg[  \| \t f_{\alpha,\eps}\|_{L^\infty}\big\|( V_{in}^\eps - V_{in})\ast \nabla \t f_{\beta,\eps}\big\|^2_{L^2}+ \| \t f_{\alpha,\eps}\|_{L^1}\big\|( V_{out}^\eps - V_{out})\ast \nabla \t f_{\beta,\eps}\big\|^2_{L^\infty}\bigg]\\
\leq &C\sum_{\beta=1}^n \bigg[\| \t f_{\alpha,\eps}\|_{L^\infty}\| V_{in}^\eps - V_{in}\|_{L^1}^2\| \nabla\t f_{\beta,\eps}\|_{L^2}^2+\| \t f_{\alpha,\eps}\|_{L^1}\| V_{out}^\eps - V_{out}\|_{L^2}^2\| \nabla \t f_{\beta,\eps}\|_{L^2}^2\bigg]\\
\leq & C(T)\eps^2\sum_{\beta=1}^n\|\nabla \t f_{\beta,\eps}\|_{L^2}^2,
\end{aligned}
$$
where the last inequality comes from \eqref{uniform in eps}, \eqref{V'} and \eqref{V''}.
That is to say, when $s\leq d-2$, 
\begin{equation}\label{ineq H1}
\sum_{\beta=1}^n \int_{\R^d} \t f_{\alpha,\eps} \big|\nabla V\ast\t f_{\beta,\eps}-\nabla V\ast \chi^\eps\ast \t f_{\beta,\eps}\big|^2\d x\leq C(T)\eps^2\sum_{\beta=1}^n(\|\nabla \t f_{\beta,\eps}\|_{L^2}^2+1).  
\end{equation}
The second term on the right-hand side of \eqref{RE estimate} can be estimated for each $\beta=1,\ldots,n$ as follows
\begin{equation}\label{ineq H2}
\begin{aligned}
\int_{\R^d} \t f_{\alpha,\eps} &\big|\nabla V\ast \bar f_{\beta}-\nabla V\ast\t f_{\beta,\eps}|^2\,\d x=\int_{\R^d} \t f_{\alpha,\eps} \big|\nabla V\ast h_\beta|^2\\
\leq & 2\int_{\R^d} \t f_{\alpha,\eps}\big|\nabla V|_{B_1}\ast h_\beta|^2\,\d x+2\int_{\R^d} \t f_{\alpha,\eps}\big|\nabla V|_{B_1^c}\ast h_\beta|^2\,\d x\\
\leq &2\|\t f_{\alpha,\eps}\|_{L^\infty}\|\nabla V|_{B_1}\ast h_\beta\|_{L^2}^2+2\int_{\R^d} \t f_{\alpha,\eps}(x)\Big|\int_{\R^d}\nabla V|_{B_1^c}(y) h_\beta(x-y)\,\d y\Big|^2\,\d x\\
\leq&2\|\t f_{\alpha,\eps}\|_{L^\infty}\|\nabla V|_{B_1}\|_{L^1}^2\| h_\beta\|_{L^{2}}^2+2\| \t f_{\alpha,\eps}\|_{L^1}\|\nabla V|_{B_1^c}\|_{L^\infty}^2 \|h_\beta\|^2_{L^1}\\
\leq &\,C(T)\big(\|h_\beta\|^2_{L^2}+\|h_\beta\|^2_{L^1}\big).
\end{aligned}
\end{equation}
We plug in \eqref{ineq H1} and \eqref{ineq H2} into \eqref{RE estimate} to get the following estimate of the relative entropy
\begin{equation*}
 \frac{\,\d}{\,\d t}H(\t f_{\alpha,\eps}|\bar f_\alpha)\leq C(T)\Big[ \eps^2\sum_{\alpha=1}^n\|\nabla \t f_{\alpha,\eps}\|_{L^2}^2+\sum_{\beta=1}^n\big(\|h_\beta\|_{L^2}^2+H(\t f_{\beta,\eps}|\bar f_\beta)\big)\Big].
\end{equation*}
Recall \eqref{L2L1} and omit the dependence on some parameters to deduce that
$$
\frac{\,\d }{\,\d t}\sum_{\alpha=1}^n\|h_\alpha\|_{L^2}^2\leq C(T)\Big[  \eps^2\sum_{\alpha=1}^n\big(\|\nabla \bar f_\beta\|_{L^2}^2+1\big)+\sum_{\alpha=1}^n(\|h_\alpha\|_{L^2}^2+H(\t f_{\alpha,\eps}|\bar f_\alpha))\Big].
$$
Adding these two estimates together gives us
$$
\begin{aligned}
\frac{\,\d }{\,\d t}\sum_{\alpha=1}^n& \Big(\|h_\alpha\|_{L^2}^2+H(\t f_{\alpha,\eps}|\bar f_\alpha)\Big)\\\leq& C(T)\Big[\eps^2\sum_{\alpha=1}^n\left(\|\nabla \t f_{\alpha,\eps}\|_{L^2}^2+\|\nabla \bar f_{\alpha}\|_{L^2}^2+1\right)+\sum_{\alpha=1}^n\Big(\|h_\alpha\|_{L^2}^2+H(\t f_{\alpha,\eps}|\bar f_\alpha)\Big)\Big]. 
\end{aligned}
$$
Recall the fact that $\t f_{\alpha,\eps}, \bar f_\alpha\in L^2(0,T,H^1)$ with bounds independent of $\eps$, namely
$$
\sup_\eps\sum_{\alpha=1}^n\int_0^T\left(\|\nabla \t f_{\alpha,\eps}\|_{L^2}^2+\|\nabla \bar f_{\alpha}\|_{L^2}^2+1\right)\d t<C(T),
$$
and the same initial data as $\t f_{\alpha,\eps}(0)=\bar f_{\alpha}(0)=\bar f_\alpha^0$.  We apply Gronwall's inequality to obtain that, for any $\alpha=1, \ldots,n$
\begin{equation*}
\sup_{t\in[0,T]}\big(\|h_\alpha\|_{L^2}^2+H(\t f_{\alpha,\eps}|\bar f_\alpha)\big)\leq C(T)\eps^2,
\end{equation*}
which concludes the proof of Proposition \ref{second step}.
\end{proof}

\begin{remark}\label{remark:logPDE}
    We remark that the above proof does not easily extend to $d=2$, $V(x)= \log|x|$. In particular, the decomposition in \eqref{ineq V V'} in $V_{in}$ and $V_{out}$ with corresponding regularity does not hold due to the slow decay at infinity of $\nabla V = |x|^{-1}$.
\end{remark}

\section{Global well-posedness of aggregation-diffusion system}\label{pde analysis}
\subsection{Global-in-time existence of solutions of (\ref{aggregation-diffusion})}\label{existence subsection}
To prove the existence part of Theorem \ref{well-possedness}, we focus first on getting uniform-in-time a priori $L^p$-estimate of the intermediate system \eqref{intermediate} under the smallness assumptions for any $p>1$, and then obtaining the global-in-time $L^\infty$ bound by using the mild formulation. These uniform-in-$\eps$ bounds imply that there is a weak solution of the limiting system \eqref{aggregation-diffusion} by a well-known by now compactness argument and passing to the limit, see \cite{FP08,CG24} and the appendix for details.   

\begin{lemma}\label{uniformeps}
Under assumptions (H1)-(H5) and the smallness condition  \eqref{d+1}, for any $T>0$, the norm satisfies
$$ 
\sup_\eps\|\t f_{\alpha,\eps}\|_{L^\infty(0,T;L^1\cap L^\infty)\cap L^2(0,T;H^1)}<C(T),
$$ where the constant $C(T)$ is independent of  $\eps$.
\end{lemma}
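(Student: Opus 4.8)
The plan is to prove the three bounds in the statement — $L^\infty(0,T;L^1)$, $L^\infty(0,T;L^{d+1})$ (and hence $L^\infty(0,T;L^\infty)$), and $L^2(0,T;H^1)$ — in succession, working entirely at the level of the smooth regularised system \eqref{intermediate}, so that every manipulation below is rigorous for fixed $\eps$ while the resulting bounds are tracked to be independent of $\eps$. First I would record the $L^1$ bound: writing \eqref{intermediate} as $\p_t\t f_{\alpha,\eps}=\sigma_\alpha\Delta\t f_{\alpha,\eps}+\div(\t f_{\alpha,\eps}\,b_{\alpha,\eps})$ with the drift $b_{\alpha,\eps}:=\sum_{\beta}\nabla V^\eps_{\alpha\beta}\ast\t f_{\beta,\eps}$ smooth, the maximum principle gives $\t f_{\alpha,\eps}\ge 0$ and integration in space gives conservation of mass, so $\|\t f_{\alpha,\eps}(t)\|_{L^1}=\|\bar f_\alpha^0\|_{L^1}$ for all $t$ and all $\eps$.

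The heart of the proof is the uniform $L^{d+1}$ bound, obtained from an energy estimate that closes thanks to the smallness condition. Multiplying \eqref{intermediate} by $(d+1)\t f_{\alpha,\eps}^{d}$ and integrating by parts,
\begin{equation*}
\frac{\d}{\d t}\|\t f_{\alpha,\eps}\|_{L^{d+1}}^{d+1}=-\frac{4d\,\sigma_\alpha}{d+1}\big\|\nabla\t f_{\alpha,\eps}^{(d+1)/2}\big\|_{L^2}^2+d\sum_{\beta=1}^n\int_{\R^d}\t f_{\alpha,\eps}^{d+1}\,\big(\Delta V^\eps_{\alpha\beta}\ast\t f_{\beta,\eps}\big)\,\d x .
\end{equation*}
Since $\Delta V=s(s+2-d)|x|^{-(s+2)}\le 0$ for $s<d-2$ and $\Delta V=-c_d\,\delta_0$ for $s=d-2$, the repulsive pairs ($a_{\alpha\beta}>0$) contribute a nonpositive term and only the attractive pairs need to be controlled; there I would bound $|\Delta V^\eps_{\alpha\beta}\ast\t f_{\beta,\eps}|\le|a_{\alpha\beta}|\,(|\Delta V|\ast\chi^\eps)\ast\t f_{\beta,\eps}$, apply the Hardy-Littlewood-Sobolev inequality to the Riesz potential $|x|^{-(s+2)}\ast\t f_{\beta,\eps}$ when $s<d-2$ and Young's inequality when $s=d-2$ (where $|\Delta V|\ast\chi^\eps=c_d\chi^\eps$ with $\|\chi^\eps\|_{L^1}=1$), and then use Hölder together with the Gagliardo-Nirenberg-Sobolev inequality $\|g\|_{L^{2d/(d-2)}}\le C_{GNS}\|\nabla g\|_{L^2}$ with $g=\t f_{\alpha,\eps}^{(d+1)/2}$, interpolating the leftover norm of $\t f_{\beta,\eps}$ against the conserved $L^1$ mass. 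This yields, for all $s\le d-2$, an estimate of the form
\begin{equation*}
d\sum_{\beta=1}^n\int_{\R^d}\t f_{\alpha,\eps}^{d+1}\big(\Delta V^\eps_{\alpha\beta}\ast\t f_{\beta,\eps}\big)\,\d x\;\le\;C\Big(\sum_{\beta=1}^n|a_{\alpha\beta}|\Big)\Big(\sum_{\beta=1}^n|a_{\alpha\beta}|\,\|\t f_{\beta,\eps}\|_{L^{d+1}}^{\frac{2s(d+1)}{d^2}}\Big)\big\|\nabla\t f_{\alpha,\eps}^{(d+1)/2}\big\|_{L^2}^2 ,
\end{equation*}
with $C$ an explicit constant built from $C_{HLS}$, $C_{GNS}$ and $d$; the smallness condition \eqref{smallness assumption} is precisely the requirement that, as long as $\|\t f_{\beta,\eps}\|_{L^{d+1}}\le\|\bar f_\beta^0\|_{L^{d+1}}$, this coefficient stays strictly below the dissipation coefficient $\tfrac{4d\sigma_\alpha}{d+1}$. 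A continuation argument (the bound holds at $t=0$ and cannot be the first to fail) then gives $\|\t f_{\alpha,\eps}(t)\|_{L^{d+1}}\le\|\bar f_\alpha^0\|_{L^{d+1}}$ for all $t\ge 0$ uniformly in $\eps$, and integrating the residual dissipation gives $\int_0^T\|\nabla\t f_{\alpha,\eps}^{(d+1)/2}\|_{L^2}^2\,\d t\le C(T)$ uniformly in $\eps$.

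With the uniform $L^1\cap L^{d+1}$ control in hand, the drift is controlled as well: splitting $\nabla V$ into its restrictions to $B_1$ and $B_1^c$ and using $|\nabla V|\sim|x|^{-(s+1)}$, with $\nabla V|_{B_1}\in L^{(d+1)/d}$ for every $s\le d-2$ and $\nabla V|_{B_1^c}\in L^\infty$, Hölder's and Young's inequalities give $\sup_{t\in[0,T]}\|\nabla V^\eps_{\alpha\beta}\ast\t f_{\beta,\eps}\|_{L^\infty}\le C$ independently of $\eps$. A Moser iteration of the $L^p$ energy estimates with this now bounded drift — or, equivalently, the Duhamel representation $\t f_{\alpha,\eps}(t)=e^{\sigma_\alpha t\Delta}\bar f_\alpha^0-\int_0^t e^{\sigma_\alpha(t-\tau)\Delta}\div(\t f_{\alpha,\eps}b_{\alpha,\eps})\,\d\tau$ together with the smoothing bound $\|e^{t\Delta}\div g\|_{L^\infty}\lesssim t^{-1/2-d/(2q)}\|g\|_{L^q}$ and a bootstrap in $q$ — upgrades the bound to $\sup_{t\in[0,T]}\|\t f_{\alpha,\eps}\|_{L^\infty}\le C(T)$, uniformly in $\eps$. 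Finally, the $p=2$ energy identity gives $\tfrac12\tfrac{\d}{\d t}\|\t f_{\alpha,\eps}\|_{L^2}^2+\sigma_\alpha\|\nabla\t f_{\alpha,\eps}\|_{L^2}^2\le\|\nabla\t f_{\alpha,\eps}\|_{L^2}\|\t f_{\alpha,\eps}\|_{L^2}\max_\beta\|\nabla V^\eps_{\alpha\beta}\ast\t f_{\beta,\eps}\|_{L^\infty}$, so absorbing $\tfrac{\sigma_\alpha}{2}\|\nabla\t f_{\alpha,\eps}\|_{L^2}^2$ on the left, applying Gronwall's inequality and then integrating in time yields $\t f_{\alpha,\eps}\in L^\infty(0,T;L^2)\cap L^2(0,T;H^1)$ with $\eps$-independent bounds, completing the argument.

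The main obstacle is the $L^{d+1}$ estimate: one must carry the sharp constants $C_{HLS}$ and $C_{GNS}$ faithfully through the chain of inequalities so that the precise threshold \eqref{smallness assumption} genuinely forces the attractive interaction term to be dominated by the linear dissipation, and one must treat the borderline Newtonian case $s=d-2$ — where $\Delta V$ is a Dirac mass rather than a locally integrable Riesz kernel, so Hardy-Littlewood-Sobolev is unavailable and one works directly with $\chi^\eps$ via Young's inequality — on the same footing as the sub-Newtonian range $s<d-2$; convolving first with the mollifier and retaining only $\eps$-independent quantities is exactly what makes the two cases uniform. Everything else — positivity, mass conservation, the Moser/Duhamel bootstrap to $L^\infty$, and the $H^1$ estimate — is routine.
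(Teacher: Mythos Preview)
Your overall architecture matches the paper's: $L^1$ by mass conservation, $L^{d+1}$ by an energy estimate that closes under smallness, $L^\infty$ via the Duhamel formula once the drift $\nabla V^\eps_{\alpha\beta}\ast\t f_{\beta,\eps}$ is bounded, and finally $L^2(0,T;H^1)$ from the $p=2$ identity. The last two steps are essentially identical to the paper's.

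The substantive difference is in the $L^{d+1}$ step, and here there is a gap in what you claim. The paper does \emph{not} integrate by parts a second time to bring out $\Delta V$; instead, after multiplying by $(\t f_{\alpha,\eps})^{p-1}$ it applies Cauchy--Schwarz with a parameter $\delta$ to the cross term, producing the quantity $\int(\t f_{\alpha,\eps})^{p}\,|\nabla V\ast\chi^\eps\ast\t f_{\beta,\eps}|^2$, and it is on this \emph{square} that H\"older, Sobolev and Hardy--Littlewood--Sobolev (applied to $|x|^{-(s+1)}$, uniformly for all $s\le d-2$) are used. The subsequent optimisation in $\delta$ is exactly what generates the exponent $\tfrac{2s(d+1)}{d^2}$ and the $\sigma_\alpha^2/\sum_\beta|a_{\alpha\beta}|$ structure in the threshold \eqref{d+1}. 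Your $\Delta V$ route is a legitimate alternative --- it even lets you discard repulsive pairs by sign --- but carrying your own chain through (H\"older with exponents $\tfrac{d}{d-2},\tfrac{d}{2}$, then GNS, then HLS on $|x|^{-(s+2)}$, then interpolation) yields the exponent $\tfrac{s(d+1)}{d^2}$, not $\tfrac{2s(d+1)}{d^2}$, no extra $\sum_\beta|a_{\alpha\beta}|$ prefactor, and a different HLS constant. So the displayed inequality you wrote does not follow from your computation, and the smallness threshold your argument actually produces is \emph{not} \eqref{d+1} and is not obviously implied by it. As written you prove a variant of the lemma under a modified smallness assumption; to prove it under the stated hypothesis \eqref{d+1} you would have to revert to the paper's Cauchy--Schwarz step on $\nabla V$.
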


\begin{proof} 
\

{\bf Step 1.- Propagation of $L^p$ bounds.} Fix $p>1$. We multiply the intermediate PDEs \eqref{intermediate} by $(\t f_{\alpha,\eps})^{p-1}$, and apply Cauchy-Schwarz inequality with some constant $\delta>0$ to get
$$
\begin{aligned}
\frac{1}{p}\frac{\,\d}{\,\d t}\int_{\R^d} (\t f_{\alpha,\eps})^{p}\d x&=\,-\sigma_\alpha(p-1)\int_{\R^d} (\t f_{\alpha,\eps})^{p-2}|\nabla \t f_{\alpha,\eps}|^2\d x\\&-\sum_{\beta=1}^n a_{\alpha\beta}(p-1)\int_{\R^d} (\t f_{\alpha,\eps})^{p-1}\nabla \t f_{\alpha,\eps}\cdot\nabla V\ast \chi^\eps\ast \t f_{\beta,\eps}\d x\\
\leq &-\sigma_\alpha(p-1)\int_{\R^d} (\t f_{\alpha,\eps})^{p-2}|\nabla \t f_{\alpha,\eps}|^2\d x\\&+\sum_{\beta=1}^n |a_{\alpha\beta}|(p-1)\Big[\delta\int_{\R^d} (\t f_{\alpha,\eps})^{p-2}|\nabla \t f_{\alpha,\eps}|^2\d x+ \frac{1}{4\delta} \int_{\R^d} (\t f_{\alpha,\eps})^{p}|\nabla V\ast\chi^\eps\ast \t f_{\beta,\eps}|^2\d x\Big],
\end{aligned}
$$
which yields the estimate
\begin{equation}\label{ineq dfdt}
\begin{aligned}
\frac{1}{p}\frac{\,\d}{\,\d t}\int_{\R^d} (\t f_{\alpha,\eps})^{p}\d x\leq &-\frac{4(p-1)}{p^2}\big(\sigma_\alpha-\sum_{\beta=1}^n |a_{\alpha\beta}|\delta\big)\int_{\R^d} \big|\nabla (\t f_{\alpha,\eps})^{\frac{p}{2}}\big|^2\d x\\&+  \frac{(p-1)}{4\delta}\sum_{\beta=1}^n |a_{\alpha\beta}|\int_{\R^d} (\t f_{\alpha,\eps})^{p}|\nabla V\ast\chi^\eps\ast \t f_{\beta,\eps}|^2\d x.
\end{aligned}
\end{equation}
In order to further estimate the second term on the right-hand side, we use H\"{o}lder's inequality which yields
$$
\begin{aligned}
\int_{\R^d} (\t f_{\alpha,\eps})^{p}|\nabla V\ast\chi^\eps\ast \t f_{\beta,\eps}|^2\d x\leq \left(\int_{\R^d} (\t f_{\alpha,\eps})^{\frac{pd}{d-2}}\d x\right)^{\frac{d-2}{d}}   \left(\int_{\R^d} |\nabla V\ast \chi^\eps\ast \t f_{\beta,\eps}|^{d}\d x\right)^{\frac{2}{d}}.
\end{aligned}
$$
By Gagliardo–Nirenberg–Sobolev's inequality, for $d\geq3$ it holds that
$$
\|(\t f_{\alpha,\eps})^{\frac{p}{2}}\|_{L^{\frac{2d}{d-2}}}\leq C_{GNS}\|\nabla (\t f_{\alpha,\eps})^{\frac{p}{2}}\|_{L^{2}}, 
$$
i.e.,
$$
\left(\int_{\R^d} (\t f_{\alpha,\eps})^{\frac{pd}{d-2}}\d x\right)^{\frac{d-2}{d}}\leq C_{GNS}^2\int_{\R^d} \big|\nabla (\t f_{\alpha,\eps})^{\frac{p}{2}}\big|^2\d x.
$$
On the other hand, Young's convolution inequality and Hardy-Littlewood-Sobolev's inequality (for example, see \cite{lieb1983sharp}), we obtain with $\frac{1}{d}-\frac{d-s}{d}+1=\frac{s+1}{d}$ the following bound
$$
\|\nabla V\ast \chi^\eps\ast \t f_{\beta,\eps}\|_{L^d}\leq\|\chi^\eps\|_{L^1}\|\nabla V\ast \t f_{\beta,\eps}\|_{L^d}\leq C_{HLS} \|\t f_{\beta,\eps}\|_{L^\frac{d}{d-s}}.$$ We  then arrive at
$$
\begin{aligned}
\int_{\R^d} (\t f_{\alpha,\eps})^{p}|\nabla V\ast\chi^\eps\ast \t f_{\beta,\eps}|^2\d x\leq &\,C_{HLS}^2C_{GNS}^2\|\t f_{\beta,\eps}\|_{L^{\frac{d}{d-s}}}^2\int |\nabla (\t f_{\alpha,\eps})^{\frac{p}{2}}|^2\d x\\
\leq &\, C_{HLS}^2C_{GNS}^2\|\t f_{\beta,\eps}\|_{L^p}^{\frac{2sp}{d(p-1)}}\int |\nabla (\t f_{\alpha,\eps})^{\frac{p}{2}}|^2\d x, 
\end{aligned}
$$
where we used interpolation for $p\geq \frac{d}{d-s}$ that
$$
\|\t f_{\beta,\eps}\|_{L^{\frac{d}{d-s}}}\leq \|\t f_{\beta,\eps}\|_{L^1}^{\frac{(d-s)/d-1/p}{1-1/p}}\|\t f_{\beta,\eps}\|_{L^p}^{\frac{1-(d-s)/d}{1-1/p}}=\|\t f_{\beta,\eps}\|_{L^p}^{\frac{sp}{d(p-1)}}.
$$
In order to estimate \eqref{ineq dfdt}, we put together previous bounds to conclude that
$$
\begin{aligned}
\frac{1}{p}\frac{\,\d}{\,\d t}\int_{\R^d} & (\t f_{\alpha,\eps})^p\d x\leq-\frac{4(p-1)}{p^2}\big(\sigma_\alpha-\sum_{\beta=1}^n |a_{\alpha\beta}|\delta\big)\int_{\R^d} \big|\nabla (\t f_{\alpha,\eps})^{\frac{p}{2}}\big|^2\d x\\&+  \frac{(p-1)}{4\delta}C_{HLS}^2C_{GNS}^2\sum_{\beta=1}^n |a_{\alpha\beta}|\|\t f_{\beta,\eps}\|_{L^p}^{\frac{2sp}{d(p-1)}}\int_{\R^d} |\nabla  (\t f_{\alpha,\eps})^{\frac{p}{2}}|^2\d x\\
& \leq \frac{p-1}{4\delta}\bigg(C_{HLS}^2C_{GNS}^2\sum_{\beta=1}^n |a_{\alpha\beta}|\|\t f_{\beta,\eps}\|_{L^p}^{\frac{2sp}{d(p-1)}}-\frac{16\delta\big(\sigma_\alpha-\sum_{\beta=1}^n |a_{\alpha\beta}|\delta\big)}{p^2}\bigg)\int_{\R^d} |\nabla  (\t f_{\alpha,\eps})^{\frac{p}{2}}|^2\d x.
\end{aligned}
$$
For any $\alpha=1, \ldots, n$, if we assume 
$$
\sum_{\beta=1}^{n} |a_{\alpha\beta}|\|\t f_{\beta,\eps}(0)\|_{L^p}^{\frac{2sp}{d(p-1)}}=\sum_{\beta=1}^{n} |a_{\alpha\beta}|\|\bar f_\beta^0\|_{L^p}^{\frac{2sp}{d(p-1)}}\leq \frac{16\delta\big(\sigma_\alpha-\sum_{\beta=1}^{n} |a_{\alpha\beta}|\delta\big)}{p^2C_{HLS}^2C_{GNS}^2},
$$
then for any time $t$, it holds
\begin{equation}\label{small initial delta}
\frac{1}{p}\frac{\,\d}{\,\d t}\int_{\R^d} (\t f_{\alpha,\eps})^p\d x\leq 0.
\end{equation}
That is to say, it is sufficient to assume
\begin{equation}\label{small initial}
\forall \alpha,\quad \sum_{\beta=1}^{n} |a_{\alpha\beta}|\|\bar f_\beta^0\|_{L^p}^{\frac{2sp}{d(p-1)}}\leq \frac{4\sigma_\alpha^2}{p^2C_{HLS}^2C_{GNS}^2\sum_{\beta=1}^{n} |a_{\alpha\beta}|},
\end{equation}
by taking suitable $\delta$ in \eqref{small initial delta}.
Then $\|\t f_{\alpha,\eps
}(t)\|_{L^p}$ decreases in time.
It can be seen that the larger $p$ is, the smaller initial data is required. In conclusion, for any fixed $p\geq\frac{d}{d-s}$, assume the $L^p$ norm of initial data is fairly small (or in other words diffusion $\sigma_\alpha$ is big enough) such that \eqref{small initial} holds, then the $L^p$-norm of the density of $\alpha$-th species is decreasing in time. While for $1<p<\frac{d}{d-s}$, $\|\t f_{\alpha,\eps}\|_{L^p}$ can be interpolated by $\|\t f_{\alpha,\eps}\|_{L^1}$ and $\|\t f_{\alpha,\eps}\|_{L^{d/(d-s)}}$. Then for any $p\geq 1$, under suitable assumption on initial data, we are able to show that the following bound 
$$
\sup_\eps\|\t f_{\alpha,\eps}\|_{L^{\infty}(0,\infty; L^1\cap L^p(\R^d))}<C, \quad \alpha=1,2,\ldots,n,
$$
holds.

{\bf Step 2.- $L^\infty$-bounds.} We rewrite $\t f_{\alpha,\eps}$ into the mild formulation such as
$$
\t f_{\alpha,\eps}(t)=\Gamma_\alpha(t)\ast \bar f_\alpha^0-\sum_{\beta=1}^n\int_0^t \Gamma_\alpha(t-\tau)\ast\div \big(\t f_{\alpha,\eps}(\tau)\nabla V^\eps_{\alpha\beta}\ast \t f_{\beta,\eps}(\tau)\big)\,\d \tau, 
$$
with heat kernel given as
$$
\Gamma_\alpha(t):=\frac{1}{(4\sigma_\alpha \pi t)^{d/2}}\exp (-\frac{|\cdot|^2}{4\sigma_\alpha t}).
$$
It yields the estimate
\begin{equation}\label{mild}
\begin{aligned}
\|\t f_{\alpha,\eps}(t)\|_{L^\infty}\leq&\,\|\Gamma_\alpha(t)\ast \bar f_\alpha^0\|_{L^\infty}+\sum_{\beta=1}^n\int_0^t \big\|\Gamma_\alpha(t-\tau)\ast\div (\t f_{\alpha,\eps}\nabla V^\eps_{\alpha\beta}\ast \t f_{\beta,\eps})\big\|_{L^\infty}\,\d \tau\\
\leq&\,\|\bar f_\alpha^0\|_{L^\infty}+\sum_{\beta=1}^n\int_0^t \big\|\nabla\Gamma_\alpha(t-\tau)\big\|_{L^{q'}}\big\| \t f_{\alpha,\eps}(\tau)\big\|_{L^{q}}\big\|\nabla V^\eps_{\alpha\beta}\ast \t f_{\beta,\eps}(\tau)\big\|_{L^\infty}\,\d \tau\\\leq&\,\|\bar f_\alpha^0\|_{L^\infty}+\sup_{\tau\in[0,t]}\Big(\big\| \t f_{\alpha,\eps}(\tau)\big\|_{L^{q}}\sum_{\beta=1}^n\big\|\nabla V^\eps_{\alpha\beta}\ast \t f_{\beta,\eps}(\tau)\big\|_{L^\infty}\Big)\int_0^t \big\|\nabla\Gamma_\alpha(t-\tau)\big\|_{L^{q'}}\,\d \tau,\\
\end{aligned}
\end{equation}
where $1/q+1/q'=1$. Since
$$
\big\|\nabla\Gamma_\alpha(t-\tau)\big\|_{L^{q'}}\leq \frac{C}{(t-\tau)^{\frac{d+1}{2}-\frac{d}{2q'}}},
$$
to make \eqref{mild} integrable with respect to time variable, we should let $1\leq q'<\frac{d}{d-1}$, i.e., $d<q\leq \infty$. Also, it holds by taking the cut-off that
\begin{equation}\label{ineq supercoulomb}
\begin{aligned}
\big\|\nabla V^\eps_{\alpha\beta}\ast \t f_{\beta,\eps}\big\|_{L^\infty} &\leq C(\max_{\alpha,\beta}|a_{\alpha\beta}|)   \left\|\frac{1}{|\cdot|^{s+1}}\ast \t f_{\beta,\eps}\right\|_{L^\infty}
\\&\leq  C(\max_{\alpha,\beta}|a_{\alpha\beta}|)  \left\|\frac{1}{|\cdot|^{s+1}}\big|_{B_1}\ast \t f_{\beta,\eps}\right\|_{L^\infty}+C(\max_{\alpha,\beta}|a_{\alpha\beta}|)\left\|\frac{1}{|\cdot|^{s+1}}\big|_{B_1^c}\ast \t f_{\beta,\eps}\right\|_{L^\infty}
\\&\leq   C(\max_{\alpha,\beta}|a_{\alpha\beta}|) \left\|\frac{1}{|\cdot|^{s+1}}\big|_{B_1}\right\|_{L^{r'}}\big\| \t f_{\beta,\eps}\big\|_{L^r}+C(\max_{\alpha,\beta}|a_{\alpha\beta}|) \left\|\frac{1}{|\cdot|^{s+1}}\big|_{B_1^c}\right\|_{L^\infty}\big\| \t f_{\beta,\eps}\big\|_{L^1},
\end{aligned}
\end{equation}
where it requires $\frac{1}{r}+\frac{1}{r'}=1$ and $r'(s+1)<d$, i.e., $r>\frac{d}{d-(s+1)}$. Due to assumption (H2), i.e., $s\leq d-2$, any $r>d$ is admissible. For simplicity, we take both $r$ and $q$ as $r=q=d+1$, then \eqref{mild} reads as 
\begin{equation}\label{ineq LinftyLp}
\begin{aligned}
&\|\t f_{\alpha,\eps}(t)\|_{L^\infty}\\\leq&\|\bar f_\alpha^0\|_{L^\infty}+C(\max_{\alpha,\beta}|a_{\alpha\beta}|)\sup_{\tau\in[0,t]}\Big(\big\| \t f_{\alpha,\eps}(\tau)\big\|_{L^{d+1}}\sum_{\beta=1}^n\big(\big\| \t f_{\beta,\eps}(\tau)\big\|_{L^{d+1}}+1\big)\Big)\int_0^t \frac{1}{(t-\tau)^{\frac{2d+1}{2d+2}}}\,\d \tau\\
\leq&\,\|\bar f_\alpha^0\|_{L^\infty}+C(t,\max_{\alpha,\beta}|a_{\alpha\beta}|)\big(\sum_{\beta=1}^n\big\| \t f_{\beta,\eps}\big\|_{L^\infty(0,t;L^{d+1})}^2+1\big).\\
\end{aligned}
\end{equation}
By assuming \eqref{small initial} with $p=d+1$ such that
\begin{equation}\label{d+1}
\forall \alpha,\quad \sum_{\beta=1}^{n} |a_{\alpha\beta}|\|\bar f_\beta^0\|_{L^{d+1}}^{\frac{2s(d+1)}{d^2}}\leq \frac{4\sigma_\alpha^2}{(d+1)^2C_{HLS}^2C_{GNS}^2\sum_{\beta=1}^n |a_{\alpha\beta}|},
\end{equation}
we are able to show for any species $\alpha$, 
$\big\| \t f_{\alpha,\eps}\big\|_{L^\infty(0,\infty;L^{d+1})}$ is bounded uniformly in $\eps$. We infer from \eqref{ineq LinftyLp} that,  for any $T>0$ and any $\alpha=1,\ldots,n$, it holds
$$
\sup_{t\in[0,T]}\big\| \t f_{\alpha,\eps}(t)\big\|_{L^\infty(\R^d)}<C(T),
$$
where we omit the dependence of $\max_{\alpha,\beta}|a_{\alpha\beta}|$.
Also, we can multiply \eqref{intermediate} by $\t f_{\alpha,\eps}$ to get
$$
\begin{aligned}
\sigma_\alpha\int_0^T\int_{\R^d} |\nabla \t f_{\alpha,\eps} |^2\,\d x\,\d t \leq&\, \frac{1}{2}\int_{\R^d} | \bar f_\alpha^0|^2\,\d x+\sum_{\beta=1}^n\int_0^T\int_{\R^d} \big|\t f_{\alpha,\eps}\nabla \t f_{\alpha,\eps}\cdot \nabla V_{\alpha\beta}^\eps\ast \t f_{\beta,\eps}\big|\,\d x\,\d t\\
\leq&\, \frac{1}{2}\int_{\R^d} | \bar f_\alpha^0|^2\,\d x+\delta\int_0^T\int_{\R^d} \big|\nabla \t f_{\alpha,\eps}\big|^2\d x\,\d t\\&+C(\delta)\sum_{\beta=1}^n\int_0^T\int_{\R^d} \big|\t f_{\alpha,\eps} \nabla V_{\alpha\beta}^\eps\ast \t f_{\beta,\eps}\big|^2\d x\,\d t,
\end{aligned}
$$
and we take $\delta$ small enough to get
$$
\int_0^T\int_{\R^d} |\nabla \t f_{\alpha,\eps} |^2\,\d x\,\d t\leq C(\sigma_\alpha)\int_{\R^d} | \bar f_\alpha^0|^2\,\d x+C(T,\sigma_\alpha)\sum_{\beta=1}^n\sup_{t\in[0,T]}\big(\big\|\nabla V_{\alpha\beta}^\eps\ast \t f_{\beta,\eps}(t)\big\|_{L^\infty}^2\big\|\t f_{\alpha,\eps}(t) \big\|_{L^2}^2\big).
$$
Since it holds  $$\sup_{t\in[0,T]}\|\t f_{\alpha,\eps}(t) \big\|_{L^2}\leq\sup_{t\in[0,T]}\Big(\|\t f_{\alpha,\eps}(t) \big\|_{L^1}\|\t f_{\alpha,\eps}(t) \big\|_{L^\infty}\Big)^\frac{1}{2} <C(T),$$ and
$$
\begin{aligned}
&\sup_{t\in[0,T]}\big\|\nabla V^\eps_{\alpha\beta}\ast \t f_{\beta,\eps}\big\|_{L^\infty}\\ \leq  &C\sup_{t\in[0,T]}\left\|\frac{1}{|\cdot|^{s+1}}\big|_{B_1}\right\|_{L^{1}}\big\| \t f_{\beta,\eps}\big\|_{L^\infty}+C\sup_{t\in[0,T]}\left\|\frac{1}{|\cdot|^{s+1}}\big|_{B_1^c}\right\|_{L^\infty}\big\| \t f_{\beta,\eps}\big\|_{L^1}\\\leq& C(T),
\end{aligned}
$$
where both constants are  uniformly  bounded in $\eps$ by boundedness of 
$\sup_\eps\big\| \t f_{\alpha,\eps}\big\|_{L^\infty(0,T;L^1\cap L^\infty)}$.
Thus, we have proved the uniform estimate of intermediate PDEs 
\eqref{intermediate}.
\end{proof}

\begin{remark}\label{lp2d}
The previous result does not hold for the Coulomb case $V(x)=\log |x|$ in two dimensions due to the Sobolev embeddings estimates.    
\end{remark}
\begin{remark}
The well-posedness result can be modified to apply to a larger range of the singularity. For $s\in (0,d-1)$, the term $\big\|\nabla V^\eps_{\alpha\beta}\ast \t f_{\beta,\eps}\big\|_{L^\infty}$ can be bounded by $C(\|\t f_{\beta,\eps}\|_{L^r}+1)$ for a large enough $r$ as shown in \eqref{ineq supercoulomb}. Then under some suitable smallness condition with $L^r$-norm, the solution of \eqref{aggregation-diffusion} is globally well-posed.
\end{remark}

We postpone the compactness argument in Appendix \ref{compactness}. Therein, we will prove that $\t f_{\alpha\,\eps}$ strongly converges to $\bar f_\alpha$ in $L^1(0,T;L^1(\R^d))$ for any species $\alpha$ (see Lemma \ref{L1convergence}), which allows us to pass to the limit, see Lemma \ref{pass to the limit}. Thus, we conclude that there exists a weak solution of the limiting PDE \eqref{aggregation-diffusion} defined in Theorem \ref{well-possedness} such that 
$$
\bar f_\alpha \in L^\infty(0,T;L^1\cap L^\infty)\cap L^2(0,T;H^1),\quad \alpha=1,2,\ldots,n.
$$

\subsection{Uniqueness of the solution of (\ref{aggregation-diffusion})}\label{uniqueness subsection}

We will show the uniqueness in this subsection. If $\bar f=(\bar f_1,\ldots,\bar f_n)$ and $\bar g=(\bar g_1,\ldots,\bar g_n)$ are two weak solutions satisfying
$$
\bar f_\alpha,\bar g_\alpha\in L^\infty(0,T;L^1\cap L^\infty)\cap L^2(0,T;H^1),\quad \alpha=1,2,\ldots,n,
$$
with the same initial data $\bar f^0_\alpha=\bar g^0_\alpha$.  We still use $h_\alpha$  to denote the difference as $h_\alpha=\bar g_\alpha-\bar f_\alpha$, which satisfies 
$$
\begin{aligned}
\p_t h_\alpha=\,& \sum_{\beta=1}^n a_{\alpha\beta}\div \Big(h_\alpha\big(\nabla V \ast \bar g_\beta\big)+\bar f_\alpha \big(\nabla V\ast  h_\beta \big)\Big)+\sigma_\alpha \Delta h_\alpha.  
\end{aligned}
$$
Multiplying by $h_\alpha$ and then integrating respect to spacial variable, we get 
\begin{equation}\label{haplha}
\frac{1}{2}\frac{\,\d }{\,\d t}\|h_\alpha\|_{L^2}^2+\sigma_\alpha\|\nabla h_\alpha\|_{L^2}^2=I+II,    
\end{equation}
where 
$$
I=-\sum_{\beta=1}^n a_{\alpha\beta}\int_{\R^d} h_\alpha\nabla h_\alpha \cdot \nabla V \ast \bar g_\beta\,\d x
\qquad \mbox{and} \qquad
II=-\sum_{\beta=1}^n a_{\alpha\beta}\int_{\R^d} \bar f_\alpha\nabla h_\alpha \cdot \nabla V\ast h_{\beta}\,\d x.
$$
We estimate $I$ and $II$ as follows:
$$
\begin{aligned}
|I|\leq  &\, C\sum_{\beta=1}^n \|\nabla h_\alpha\|_{L^2}\| h_\alpha\|_{L^2}\|\nabla V \ast \bar g_\beta\|_{L^\infty}\\\leq  & \,C\|\nabla h_\alpha\|_{L^2}\| h_\alpha\|_{L^2}\big(\|\nabla V|_{B_1} \ast\bar g_\beta\|_{L^\infty}+\|\nabla V|_{B_1^c} \ast\bar g_\beta\|_{L^1}\big)\\
\leq  &\, C(T) \|\nabla h_\alpha\|_{L^2}\| h_\alpha\|_{L^2};
\end{aligned}
$$
and 
$$
\begin{aligned}
|II|\leq  &\, C\sum_{\beta=1}^n \|\nabla h_\alpha\|_{L^2}\|\bar f_\alpha\nabla V \ast h_\beta\|_{L^2}\\\leq&\,  C\sum_{\beta=1}^n \|\nabla h_\alpha\|_{L^2}\Big(\|\bar f_\alpha\nabla V|_{B_1} \ast h_\beta\|_{L^2}+\|\bar f_\alpha\nabla V|_{B_1^c} \ast h_\beta\|_{L^2} \Big)
\\\leq  &\, C \sum_{\beta=1}^n \|\nabla h_\alpha\|_{L^2}\Big(\|\bar f_\alpha\|_{L^\infty}\|\nabla V|_{B_1} \ast h_\beta\|_{L^2}+\|\bar f_\alpha\|_{L^2}\|\nabla V|_{B_1^c} \ast h_\beta\|_{L^\infty}\Big) 
\\\leq  &\, C\sum_{\beta=1}^n \|\nabla h_\alpha\|_{L^2}\Big(\|\bar f_\alpha\|_{L^\infty}\|\nabla V|_{B_1}\|_{L^1}\|  h_\beta\|_{L^2}+\|\bar f_\alpha\|_{L^2}\|\nabla V|_{B_1^c} \|_{L^\infty}\| h_\beta\|_{L^1}\Big)\\
\leq  &\, C(T)\|\nabla h_\alpha\|_{L^2}\sum_{\beta=1}^n(\| h_\beta\|_{L^2}+\| h_\beta\|_{L^1}).
\end{aligned}
$$
Combining $I,II$ and using Cauchy's inequality, we get, for some small enough $\delta$, that
$$
|I|+|II|\leq \delta \|\nabla h_\alpha\|_{L^2}^2+ C(\delta,T) (\| h_\alpha\|_{L^2}^2+\sum_{\beta=1}^n (\| h_\beta\|_{L^2}^2+\| h_\beta\|_{L^1}^2)).
$$
Summing up all species in \eqref{haplha} together with Csiszár-Kullback-Pinsker inequality \eqref{CKP} gives us the estimate 
\begin{equation}\label{L2L1unique}
\frac{\,\d }{\,\d t}\left(\sum_{\alpha=1}^n\|h_\alpha\|_{L^2}^2)\leq  C(T) \sum_{\alpha=1}^n(\|h_\alpha\|_{L^2}^2+\|h_\alpha\|_{L^1}^2\right)\leq  C(T) \sum_{\alpha=1}^n(\|h_\alpha\|_{L^2}^2+H(\bar g_\alpha|\bar f_\alpha)),
\end{equation}
Apart from evolution of $L^2$-distance, the evolution of the relative entropy reads as
$$
\begin{aligned}
\frac{\,\d}{\,\d t}H(\bar g_\alpha|\bar f_\alpha)&=  \frac{\,\d}{\,\d t}\Big(\int_{\R^d}  \bar g_\alpha\log \bar g_\alpha\,\d x-\int_{\R^d} \bar g_\alpha\log  \bar f_\alpha\,\d x\Big)
= \int_{\R^d} \p_t \bar g_\alpha\log \frac{\bar g_\alpha}{\bar f_\alpha}\,\d x -\int_{\R^d} \frac{\bar g_\alpha}{\bar f_\alpha}\p_t\bar f_\alpha\,\d x\\
&=-\sigma_\alpha\int_{\R^d} \bar g_\alpha\Big|\nabla\log\frac{\bar g_\alpha}{\bar f_\alpha}\Big|^2\,\d x+\sum_{\beta=1}^n a_{\alpha\beta}\int_{\R^d} \bar g_\alpha\nabla\log\frac{\bar g_\alpha}{\bar f_{\alpha}}\cdot \big(\nabla V\ast \bar f_{\beta}-\nabla V\ast \bar g_\beta\big)\,\d x.
\end{aligned}
$$
Then Cauchy-Schwarz inequality implies that 
\begin{equation*}
\begin{aligned}
\frac{\,\d}{\,\d t}H(\bar g_\alpha|\bar f_\alpha)\leq  C\sum_{\beta=1}^n \int_{\R^d} \bar g_\alpha \big|\nabla V\ast \bar f_{\beta}-\nabla V\ast \bar g_\beta\big|^2\,\d x,
\end{aligned}
\end{equation*}
where the right-hand side can be bounded as
$$
\begin{aligned}
\int_{\R^d} \bar g_\alpha \big|\nabla V\ast \bar f_{\beta}-\nabla V\ast\bar g_\beta|^2&=\int_{\R^d} \bar g_\alpha \big|\nabla V\ast h_\beta|^2\,\d x\\
&\leq  2\int_{\R^d} \bar g_\alpha\big|\nabla V|_{B_1}\ast h_\beta|^2\,\d x+2\int_{\R^d} \bar g_\alpha\big|\nabla V|_{B_1^c}\ast h_\beta|^2\,\d x\\
&\leq 2\|\bar g_\alpha\|_{L^\infty}\|\nabla V|_{B_1}\|_{L^1}^2\| h_\beta\|_{L^{2}}^2+2\| \bar g_\alpha\|_{L^1}\|\nabla V|_{B_1^c}\|_{L^\infty}^2 \|h_\beta\|^2_{L^1}\\
&\leq C(\|h_\beta\|^2_{L^1}+\|h_\beta\|^2_{L^2}).
\end{aligned}
$$
Therefore, the relative entropy between two weak solutions can be estimated as
\begin{equation*}
 \frac{\,\d}{\,\d t}H(\bar g_\alpha|\bar f_\alpha)\leq C(T)\sum_{\beta=1}^n\big(\|h_\beta\|_{L^2}^2+H(\bar g_\beta|\bar f_\beta)\big).
\end{equation*}
Recall \eqref{L2L1unique}, which implies
$$
\frac{\,\d }{\,\d t}\sum_{\alpha=1}^n\|h_\alpha\|_{L^2}^2\leq C(T)  \sum_{\alpha=1}^n\left(\|h_\alpha\|_{L^2}^2+H(\bar g_\alpha|\bar f_\alpha)\right).
$$
we finally arrive at 
$$
\frac{\,\d }{\,\d t}\sum_{\alpha=1}^n\left(\|h_\alpha\|_{L^2}^2+H(\bar g_\alpha|\bar f_\alpha)\right)\leq C(T)\sum_{\alpha=1}^n\left(\|h_\alpha\|_{L^2}^2+H(\bar g_\alpha|\bar f_\alpha)\right).
$$
The same initial data $\bar f_\alpha^0=\bar g_\alpha^0$ and Gronwall's inequality imply that for any $\alpha=1, \ldots, n$
$$
\|h_\alpha\|_{L^2}^2+H(\bar g_\alpha|\bar f_\alpha)=0.
$$
We deduce that for almost any $t\in[0,T]$, $\bar f_\alpha(t)=\bar g_\alpha(t)$ in $L^1\cap L^2(\R^d)$. Then the uniqueness also holds in the function space $L^\infty(0,T;L^1\cap L^\infty)\cap L^2(0,T;H^1)$.

Therefore, there exists a unique weak solution of \eqref{aggregation-diffusion} and we conclude  Theorem \ref{well-possedness}.

\bigskip

\section*{Acknowledgments}
The research of JAC was supported by the Advanced Grant Nonlocal\--CPD (Non\-local PDEs for Complex Particle Dynamics: Phase Transitions, Patterns and Synchronization) of the European Research Council Executive Agency (ERC) under the European Union’s Horizon 2020 research and innovation programme (grant agreement No. 883363).
JAC was also partially supported by EPSRC grant number EP/V051121/1.
JAC was also partially supported by the “Maria de Maeztu” Excellence Unit IMAG, reference CEX2020-001105-M, funded by MCIN/AEI/10.13039/501100011033/.

\appendix

\section{Appendix}
\subsection{Proof of Lemma \ref{subadditivity}}\label{appendix sub}
\begin{proof}
By the variational definition of the relative entropy, which can be seen, for example,  in \cite{dembo2009large}, it holds
$$
\begin{aligned}
H(f_{N,\eps}|\t f_{N,\eps})&=\int_{\R^{dnN}}f_{N,\eps}\log\frac{f_{N,\eps}}{\t f_{N,\eps}}\,\,\d \mathbf{X}
\\&= \sup_{\Phi\in C_b(\R^{dnN})} \Big\{\int_{\R^{dnN}}f_{N,\eps}\Phi\,\d \mathbf{X}-\log \int_{\R^{dnN}}\t f_{N,\eps}\exp\Phi\,\d \mathbf{X}\Big\}.    
\end{aligned}
$$
We take
$$
\begin{aligned}
\Phi(\mathbf{X})&:=\Phi(x_{1,1},\ldots,x_{1,N},\ldots,x_{n,1},\ldots,x_{n,N})\\&= \sum_{p=1}^{[\frac{N}{\max_\alpha K_\alpha}]}\vphi\big(\underbrace{x_{1,(p-1)K_1+1},\ldots,x_{1,(p-1)K_1+K_1}}_{K_1},\ldots,\underbrace{x_{n,(p-1)K_n+1},\ldots,x_{n,(p-1)K_n+K_n}}_{K_n}\big)
\end{aligned}
$$
where $[a]$ is the integer part of $a\in\R$ and $\vphi\in C_b(\R^{|\mathbf{K}|d})$ with $|\mathbf{K}|=\sum_\alpha K_\alpha$. Notice that there is no redundant variables on the right-hand side, which leads to the equality by Fubini’s theorem that
$$
\begin{aligned}
\int_{\R^{dnN}}\t f_{N,\eps}\exp\Phi=\prod_{p=1}^{[\frac{N}{\max_\alpha K_\alpha}]}\int_{\R^{d|K|}}\left(\prod_{\alpha=1}^n\t f_{\alpha,\eps}^{\otimes K_\alpha}\right)\exp\vphi\,\left(\prod_{\alpha=1}^n\,\d x_{\alpha,(p-1)K_\alpha+1}\cdots\,\d x_{\alpha,(p-1)K_\alpha+K_\alpha}\right).
\end{aligned}
$$  Then we get
\begin{equation}\label{ineq N maxk}
\begin{aligned}
 H(f_{N,\eps}|\t f_{N,\eps})\geq &\int_{\R^{dnN}}f_{N,\eps}\Phi\,\d \mathbf{X}-\log \int_{\R^{dnN}}\t f_{N,\eps}\exp\Phi\,\d \mathbf{X}\\
=&\, \left[\frac{N}{\max_\alpha K_\alpha}\right]\int_{\R^{d|K|}}f_{N,\eps}^{(\mathbf{K})} \vphi-\left[\frac{N}{\max_\alpha K_\alpha}\right]\log \int_{\R^{d|\mathbf{K}|}}\prod_{\alpha=1}^n\t f_{\alpha,\eps}^{\otimes K_\alpha}\exp\vphi,
\end{aligned}
\end{equation}
where the last inequality is due to particles are fully exchangeable within each species. Since the relative entropy between $f_{N,\eps}^{(\mathbf{K})}$ and $\prod_{\alpha=1}^n\t f_{\alpha,\eps}^{\otimes K_\alpha}$ can be represented such as
$$
H(f_{N,\eps}^{(\mathbf{K})}|\prod_{\alpha=1}^n\t f_{\alpha,\eps}^{\otimes K_\alpha})=\sup_{\vphi\in C_b(\R^{d|\mathbf{K}|})}\Big\{\int_{\R^{d|K|}}f_{N,\eps}^{(\mathbf{K})} \vphi-\log \int_{\R^{d|\mathbf{K}|}}\prod_{\alpha=1}^n\t f_{\alpha,\eps}^{\otimes K_\alpha}\exp\vphi\Big\},
$$
the inequality \eqref{subadd ineq} holds by taking supremum of $\vphi$ over $C_b(\R^{d|\mathbf{K}|})$ on the right-hand side of \ref{ineq N maxk}.
\end{proof} 
\subsection{Proof of Lemma \ref{LLN}}\label{appendix LLN}

\begin{proof}
Let 
$$h_{ij}(t)=\psi_\eps^{\alpha,\beta}\big(\t{X}_{\alpha,i}^\eps(t)-\t{X}_{\beta,j}^\eps(t)\big)-\big(\psi_\eps^{\alpha,\beta} * \t{f}_{\beta,\eps}\big)\big(t,\t{X}_{\alpha,i}^\eps(t)\big),$$
where for any $t\in[0,T]$, it holds
$$\E\big[h_{ij}(t)\big|\t X_{\alpha,i}^\eps(t)\big]=\E\big[\psi_\eps^{\alpha,\beta}\big(\t{X}_{\alpha,i}^\eps(t)-\t{X}_{\beta,j}^\eps(t)\big)-\big(\psi_\eps^{\alpha,\beta} * \t{f}_{\beta,\eps}\big)\big(t,\t{X}_{\alpha,i}^\eps(t)\big)\big|\t X_{\alpha,i}^\eps(t)\big]=0.$$
We omit the dependence of species $\alpha,\beta$ on $h_{ij}$ to simplify the notation. 
By Markov's inequality, for any $m\in \N$, we get
$$
\begin{aligned}
\P\left(\mathcal{A}_{\theta, \Psi_\eps}^N(t)\right)=&\, \P \left(\bigcup_{\alpha, \beta=1}^n\bigcup_{i=1}^N\Big\{\omega \in \Omega:\Big|\frac{1}{N} \sum_{j=1}^N h_{ij} \Big|>N^{-\theta}\Big\}\right)
\leq \,n^2\max_{\alpha,\beta}\sum_{i=1}^N N^{2m\theta} \E \left[\Big|\frac{1}{N} \sum_{j=1}^N h_{ij} \Big|^{2m}\right]\\
=&\,n^2\max_{\alpha,\beta} \sum_{i=1}^N N^{2m(\theta-1)}\E \left[\left(\sum_{j,k=1}^Nh_{ij} \cdot h_{ik} \right)^m\right].
\end{aligned}
$$
By definition of the Frobenius norm, we can reduce to the case when $h_{ij} $ is a scalar function, hence
$$
\E \left[\Big(\sum_{j,k=1}^Nh_{ij}  h_{ik} \Big)^m\right]=\sum_{\Gamma}\E \big[h_{ij_1} h_{ij_2} \cdots h_{ij_{2m}} \big], 
$$
where $\Gamma =\{ (j_{1},j_{2},\ldots j_{2m}): j_{1},j_{2},\ldots j_{2m}=1,\ldots,N\}$.
Then, if there exists some specific index $j'$ that only appears once,
$$
\begin{aligned}
\E \left[h_{ij_1} h_{ij_2} \cdots h_{ij_{2m}} \right]=&\,\E \left[\E \big[h_{ij_1} h_{ij_2} \cdots h_{ij_{2m}} \big|\t X_{\alpha,i}^\eps\big]\right]\\=&\, \E \left[\E \big[h_{ij_1} \cdots \hat{h}_{ij'} \cdots h_{ij_{2m}} \big|\t X_{\alpha,i}^\eps\big]\E\big[h_{ij'} \big|\t X_{\alpha,i}^\eps\big]\right]=0,
\end{aligned}
$$
where $\hat{h}_{ij'} $ denotes that the corresponding  $h_{ij'} $ is vacant in the multiplication. Next, we consider $\Gamma'\subset\Gamma$ as the subset such that any index $j_k$ ($k=1,\ldots,2m$) appears at least twice. Then, the cardinality of the set 
$\Gamma'$ is of order $N^m$ up to a constant only depending on $m$. Therefore we have the bound
$$
\sup_{t\in[0,T]}\sum_{\Gamma}\E \big[h_{ij_1} h_{ij_2} \cdots h_{ij_{2m}} \big]= \sup_{t\in[0,T]}\sum_{\Gamma'}\E \big[h_{ij_1} h_{ij_2} \cdots h_{ij_{2m}} \big]\leq C(m,\alpha,\beta)\|\psi_\eps^{\alpha,\beta} \|^{2m}_{L^\infty}N^m,
$$
and it implies the desired estimate
$$
\sup_{t\in[0,T]}\P\left(\mathcal{A}_{\theta, \Psi_\eps }^N(t)\right) \leq n^2\max_{\alpha,\beta}C(m,\alpha,\beta)\left\|\psi_\eps^{\alpha,\beta} \right\|_{L^{\infty}}^{2 m} N^{ m(2\theta-1)+1}.
$$
\end{proof}

\subsection{Compactness argument}\label{compactness}

The approach to prove the existence of weak solutions comes from \cite{FP08}, and similar procedure has been used in \cite{CG24} for cross diffusion systems. Due to our present setting seeking for solutions in $
\bar f_\alpha\in L^\infty(0,T,L^1\cap L^\infty(\R^d))\cap L^2(0,T,H^1(\R^d))$, some estimates are slightly different for instance in the proof of Lemma \ref{relative compactness} and the passing to the limit on the compactness argument in \ref{pass to the limit}.
For completeness, we detail the proof here. Let $\mathcal{M}(\R^d)$ be the space of probability measure equipped with the following metric which measures the weak convergence in $\mathcal{M}(\R^{d})$, for $\mu_1,\mu_2\in \mathcal{M}(\R^d)$
\begin{equation}\label{BL distance}
d_{\M}(\mu_1,\mu_2):=
\sup_{\vphi\in BL}\Big|\int_{\R^d}\vphi(x)\mu_1(\d x)-\int_{\R^d}\vphi(x)\mu_2(\d x)\Big|,
\end{equation}
where the function space $BL$ denotes the set of functions whose both $L^\infty$-norm and Lipschitz constant are less than 1. As a consequence of Lemma \ref{uniformeps}, we have the relative compactness lemma as follows.
\begin{lemma}\label{relative compactness}
 For each $\alpha$, the sequence $(\t f_{\alpha,\eps})_{\eps>0}$ is relatively compact in $C([0,T],\M(\R^d))$.   
\end{lemma}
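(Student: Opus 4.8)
The plan is to prove relative compactness in $C([0,T],\M(\R^d))$ by verifying the hypotheses of a version of the Arzelà–Ascoli theorem adapted to the complete metric space $(\M(\R^d),d_\M)$: namely, (i) pointwise relative compactness of $\{\t f_{\alpha,\eps}(t)\}_\eps$ in $\M(\R^d)$ for each fixed $t\in[0,T]$, and (ii) uniform-in-$\eps$ equicontinuity of the maps $t\mapsto \t f_{\alpha,\eps}(t)$. Since $(\M(\R^d),d_\M)$ endowed with the bounded-Lipschitz metric \eqref{BL distance} is a Polish space, Arzelà–Ascoli then yields relative compactness in $C([0,T],\M(\R^d))$.

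First I would establish (i). By Lemma \ref{uniformeps} we have the uniform bound $\sup_\eps\|\t f_{\alpha,\eps}\|_{L^\infty(0,T;L^1\cap L^\infty)}<C(T)$. Since each $\t f_{\alpha,\eps}(t)$ is a probability density (mass is conserved by \eqref{intermediate}), tightness is what is needed; this follows from a uniform control of a confining moment, e.g. $\sup_\eps\sup_{t\in[0,T]}\int_{\R^d}|x|\,\t f_{\alpha,\eps}(t,x)\,\d x<\infty$. Such a first-moment bound follows by testing \eqref{intermediate} against $|x|$ (or a smooth approximation of it), using that $\nabla V_{\alpha\beta}^\eps\ast\t f_{\beta,\eps}$ is bounded in $L^\infty$ uniformly in $\eps$ — which was already shown in Section \ref{mean-field limit} and Section \ref{PDE distance} for $s\le d-2$ — together with the $L^1$ bound from Lemma \ref{uniformeps}; the diffusion term contributes a harmless linear-in-$t$ growth. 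With $L^1$ mass conserved and a uniform first moment, Prokhorov's theorem gives relative compactness of $\{\t f_{\alpha,\eps}(t)\}_\eps$ in $\M(\R^d)$ for each $t$.

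Next I would establish (ii), the equicontinuity. Fix $\vphi\in BL$ and $0\le s<t\le T$. Using the weak formulation of \eqref{intermediate},
\[
\int_{\R^d}\vphi\,\big(\t f_{\alpha,\eps}(t)-\t f_{\alpha,\eps}(s)\big)\,\d x
=\int_s^t\!\!\int_{\R^d}\Big(\sigma_\alpha\Delta\vphi\,\t f_{\alpha,\eps}-\sum_{\beta=1}^n\nabla\vphi\cdot\t f_{\alpha,\eps}\big(\nabla V_{\alpha\beta}^\eps\ast\t f_{\beta,\eps}\big)\Big)\,\d x\,\d\tau.
\]
A subtlety: $\vphi\in BL$ is only Lipschitz, so $\Delta\vphi$ need not make sense pointwise. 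The standard remedy is to first mollify $\vphi$ into $\vphi_\delta\in C_b^\infty$, estimate the regularised increment, and then control $\|\vphi-\vphi_\delta\|_{L^\infty}\le C\delta$ using the $L^1$ mass bound; alternatively one integrates the diffusion term by parts once so that only $\nabla\vphi$ (bounded by the Lipschitz constant) appears against $\nabla\t f_{\alpha,\eps}$, invoking the $L^2(0,T;H^1)$ bound of Lemma \ref{uniformeps} and Cauchy–Schwarz to get a bound of order $|t-s|^{1/2}$. For the drift term, $\|\nabla\vphi\|_{L^\infty}\le 1$, $\|\t f_{\alpha,\eps}\|_{L^1}\le C$ and $\sup_\eps\|\nabla V_{\alpha\beta}^\eps\ast\t f_{\beta,\eps}\|_{L^\infty}\le C(T)$ (valid for $s\le d-2$, as used throughout Sections \ref{mean-field limit}–\ref{PDE distance}) give a bound of order $|t-s|$. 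Combining, $d_\M(\t f_{\alpha,\eps}(t),\t f_{\alpha,\eps}(s))\le C(T)\,|t-s|^{1/2}$ uniformly in $\eps$, which is the required equicontinuity.

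The main obstacle is the drift term in the time-increment estimate together with the low regularity of test functions in $BL$: one must make sure the uniform-in-$\eps$ bound on $\nabla V_{\alpha\beta}^\eps\ast\t f_{\beta,\eps}$ in $L^\infty$ genuinely holds in the Coulomb case $s=d-2$ (it does, by splitting the kernel into its $B_1$ and $B_1^c$ parts and using $\|\t f_{\beta,\eps}\|_{L^1\cap L^\infty}\le C(T)$, exactly as in \eqref{hessian} and the estimates preceding \eqref{J1}), and to handle the second-derivative term on $\vphi$ by the mollification/integration-by-parts argument above so that only the $L^2(0,T;H^1)$ bound is used. Once both (i) and (ii) are in hand, the Arzelà–Ascoli theorem on $C([0,T],(\M(\R^d),d_\M))$ concludes the proof.
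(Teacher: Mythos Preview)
Your overall strategy via Arzel\`a--Ascoli is correct and your proof can be made to work, but the paper takes a genuinely different route: instead of working at the PDE level, it exploits the stochastic representation $\tilde X_\alpha^\eps$ of \eqref{tildeX}. For tightness the paper bounds $\P(|\tilde X_\alpha^\eps(t)|>R)$ directly by splitting into the contributions of the initial datum $Z_\alpha$, the drift integral (controlled via Markov's inequality and the uniform bound on $\nabla V^\eps_{\alpha\beta}\ast\tilde f_{\beta,\eps}$), and the Brownian motion. For equicontinuity it uses the Kantorovich--Rubinstein bound $d_\M(\tilde f_{\alpha,\eps}(t),\tilde f_{\alpha,\eps}(\tau))\le \E|\tilde X_\alpha^\eps(t)-\tilde X_\alpha^\eps(\tau)|$ and estimates the SDE increment; the Brownian part yields the $|t-\tau|^{1/2}$ directly, with no need to touch $\Delta\vphi$.

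Two small caveats on your argument. First, your integration-by-parts alternative for the diffusion term does not go through as written: for $\vphi\in BL$ you only have $\nabla\vphi\in L^\infty(\R^d)$, not $L^2(\R^d)$, so you cannot pair it with $\nabla\tilde f_{\alpha,\eps}\in L^2$ via Cauchy--Schwarz on the whole space. Your mollification route, however, is fine (with $\|\Delta\vphi_\delta\|_{L^\infty}\lesssim \delta^{-1}$ from the Lipschitz bound on $\vphi$, then optimising $\delta\sim|t-s|^{1/2}$). Second, your tightness argument via a first-moment bound implicitly assumes $\int|x|\,\bar f_\alpha^0\,\d x<\infty$, which is not part of (H4); the paper's SDE argument only needs tightness of the initial law, which is automatic for probability measures. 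This is easily repaired on your side by using a smooth cutoff rather than $|x|$, but it is a place where the stochastic route is cleaner. In short: both approaches work; the paper's SDE-based proof sidesteps the low regularity of $BL$ test functions and the moment issue, while yours has the advantage of being purely PDE-based.
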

\begin{proof}
We will prove the lemma by the Ascoli-Arzel\`{a} theorem.

{\bf Step 1.- Relative compactness.} For each $\alpha$, we will show that there is a relatively compact subset $\mathcal{K}_\alpha\subset\mathcal{M}(\R^d)$ such that for any $t\in[0,T]$ and $\eps>0$, $\t f_{\alpha,\eps}(t)\in \mathcal{K}_\alpha$. 
Because a subset of $\mathcal{M}(\R^d)$ is relatively compact if and only if it is tight, it is equivalent to show that for any $t\in[0,T]$ and $\eta>0$, there exists a compact set $K_\alpha\subset \R^d$ with $\t f_{\alpha,\eps}(K_\alpha)\geq 1-\eta$ for all $\eps>0$.  
Recall the intermediate process $\t X_{\alpha}^{\eps}(t)$ defined by \eqref{tildeX} with $\law(\t X_{\alpha}^{\eps}(t))=\t f_{\alpha,\eps}(t)$ satisfies the SDE
$$
\,\d \t X_{\alpha}^{\eps}(t) = -\sum_{\beta=1}^{n}  \nabla V_{\alpha\beta}^\eps\ast\t f_{\beta,\eps}(t,\t X_{\alpha}^{\eps}(t)) \,\d t + \sqrt{2\sigma_{\alpha}} \,\d \t B_{\alpha}(t).
$$
Then $\t f_{\alpha,\eps}(K_\alpha)\geq 1-\eta$ is equivalent to $\P\big(\t X_{\alpha}^{\eps}(t)\in K_\alpha^c\big)\leq \eta$. We can take $K_\alpha$ as a closed ball with radius $R>0$, then the probability of $\t X_{\alpha}^{\eps}$ being outside the closed ball can be estimated as
\begin{equation}\label{ineq prob X}
\begin{aligned}
\P\Big(\big|\t X_{\alpha}^{\eps}(t)\big|>R\Big)    = \,&\P\Big(\Big|Z_{\alpha}-\int_0^t \sum_{\beta=1}^{n}  \nabla V_{\alpha\beta}^\eps\ast\t f_{\beta,\eps}(\tau,\t X_{\alpha}^{\eps}(\tau)) \,\d \tau +\sqrt{2\sigma_\alpha }\t B_\alpha(t)\Big|>R\Big)\\
\leq  & \, \P\Big(\Big|\int_0^t \sum_{\beta=1}^{n}  \nabla V_{\alpha\beta}^\eps\ast\t f_{\beta,\eps}(\tau,\t X_{\alpha}^{\eps}(\tau)) \,\d \tau \Big|>\frac{R}{3}\Big)+\P\Big(\sqrt{2\sigma_\alpha }\big|\t B_\alpha(t)\big|>\frac{R}{3}\Big)\\ &\,+\P\Big(\big|Z_{\alpha}\big|>\frac{R}{3}\Big),
\end{aligned}
\end{equation}
where the second and the third terms are independent of $\eps$ and tend to $0$ as $R\to \infty$. For the first term, we deduce by Markov's inequality
\begin{align*}
\P\Big(\Big|\int_0^t \sum_{\beta=1}^{n}  \nabla V_{\alpha\beta}^\eps\ast\t f_{\beta,\eps}(\tau,\t X_{\alpha}^{\eps}(\tau)) \,\d \tau \Big|>\frac{R}{3}\Big)   \leq  &\,\sum_{\beta=1}^{n}\P\Big(\int_0^t  \big| \nabla V_{\alpha\beta}^\eps\ast\t f_{\beta,\eps}(\tau,\t X_{\alpha}^{\eps}(\tau))\big| \,\d \tau >\frac{R}{3n}\Big) \\&\,  \leq \frac{3n}{R}\sum_{\beta=1}^{n} \E\Big[\int_0^t  \big| \nabla V_{\alpha\beta}^\eps\ast\t f_{\beta,\eps}(\tau,\t X_{\alpha}^{\eps}(\tau))\big| \,\d \tau \Big] \\
 &\,  \leq\frac{3n}{R}\sum_{\beta=1}^{n}|a_{\alpha\beta}| \int_0^t \int_{\R^d} \big| \nabla V\ast\chi^\eps\ast\t f_{\beta,\eps}\big|\t f_{\alpha,\eps} \,\d x\,\d \tau . 
\end{align*}
Since the following estimate holds by the uniform boundedness given in Lemma  \ref{uniformeps}
$$
\begin{aligned}
\sup_\eps\int_0^t \int_{\R^d} \big| \nabla V\ast\chi^\eps\ast\t f_{\beta,\eps}\big|\t f_{\alpha,\eps} \,\d x&\,\d \tau 
\leq \,t \|\t f_{\alpha,\eps}\|_{L^\infty(0,t;L^1)}\|\nabla V\ast\t f_{\beta,\eps}\|_{L^\infty(0,t;L^\infty)}\leq  C(T),
\end{aligned}
$$
the first term on the right-hand side of \eqref{ineq prob X} converges to 0 uniformly in $\eps$ by sending $R$ to $\infty$. 

\medskip

{\bf Step 2.- Equicontinuity.} We will show that the sequence $(\t f_{\alpha,\eps})_{\eps>0}$ is equicontinuous, i.e., for every $\eta>0$ there exists $\delta>0$ and  $t,\tau\in[0,T]$ such that for all $ |t-\tau|<\delta$, it holds
    $$
\sup_\eps d_{\M}(\t f_{\alpha,\eps}(\tau),\t f_{\alpha,\eps}(t))<\eta.
$$ For $\tau,t\in [0,T]$, the distance between $\t f_{\alpha,\eps}(\tau)$ and $\t f_{\alpha,\eps}(t)$ can be estimated as
$$
d_{\M}\big(\t f_{\alpha,\eps}(t), \t f_{\alpha,\eps}(\tau)\big)   \leq \E\Big[\big|\t X_{\alpha}^{\eps}(t)-\t X_{\alpha}^{\eps}(\tau)\big|\Big], 
$$
due to the duality \eqref{BL distance},
and we have
\begin{align*}
\E\Big[\big|\t X_{\alpha}^{\eps}(t)-\t X_{\alpha}^{\eps}(\tau)\big|\Big]
= &\, \E\Big[\Big|\sum_{\beta=1}^{n} \int^\tau_t  \nabla V_{\alpha\beta}^\eps\ast\t f_{\beta,\eps}(\tau,\t X_{\alpha}^{\eps}(\tau)) \,\d r+\sqrt{2\sigma_{\alpha} }\t B_\alpha(t)-\sqrt{2\sigma_{\alpha} }\t B_\alpha(\tau)\Big|\Big] \\
  \leq &\, \sum_{\beta=1}^n|a_{\alpha\beta}|\E\Big[\Big|\int_\tau^t \nabla V\ast\chi^\eps\ast\t f_{\beta,\eps}(r,\t X_{\alpha}^{\eps}(r)) \,\d r\Big|\Big]+\sqrt{2\sigma_\alpha }\E\big[\big|\t B_\alpha(t)-\t B_\alpha(\tau)\big|\big] \\
  \leq&\, C |t-\tau|\|\t f_{\alpha,\eps}\|_{L^\infty(0,T;L^\infty)}\sum_{\beta=1}^n\big(\|\t f_{\beta,\eps}\|_{L^\infty(0,T;L^\infty)}+\|\t f_{\beta,\eps}\|_{L^\infty(0,T;L^1)}\big)\\&\,+\sqrt{2\sigma_\alpha }|t-\tau|^{1 / 2}.
\end{align*}
 Collecting the previous estimates we obtain $d_{\M}\big(\t f_{\alpha,\eps}(t), \t f_{\alpha,\eps}(\tau)\big) \leq C(T)(|t-\tau|+|t-\tau|^{1 / 2})$, where the constant $C$ is independent of $\eps$ by Lemma \ref{uniformeps}, which finishes the proof of Lemma \ref{relative compactness}. 
\end{proof}

Lemma \ref{relative compactness} implies that for each species $\alpha$ the sequence $(\t f_{\alpha,\eps})_{\eps>0}$ has a convergent subsequence, which is still denoted by $(\t f_{\alpha,\eps})_{\eps>0}$. Let the limit be $\bar f_\alpha \in C\left([0, T], \M\left(\R^d\right)\right)$, i.e.,
\begin{equation*}
\t f_{\alpha,\eps}\rightarrow \bar f_\alpha
\quad\text{ 
in}\quad C([0, T], \mathcal{M}(\R^d))\text{ as } \eps\to0.
\end{equation*}
By Banach-Alaoglu theorem due to the bounds in Lemma \ref{uniformeps}, up to a subsequence, $(\t f_{\alpha,\eps})_{\eps>0}$ weakly* converges to some limit in $L^\infty(0,T, L^\infty(\R^d))$, which is the dual space of $L^1(0,T,L^1(\R^d))$ (for example, see \cite[Proposition 1.38]{roubivcek2013nonlinear}). Then the limit satisfies $\bar f_\alpha\in L^\infty(0,T, L^\infty(\R^d))$; and due to the same reason, $\bar f_\alpha\in L^2(0,T, H^1(\R^d))$. Together with $
\bar f_\alpha\in C(0,T,\M(\R^d))$, we conclude  
$$
\bar f_\alpha\in L^\infty(0,T,L^1\cap L^\infty(\R^d))\cap L^2(0,T,H^1(\R^d)).
$$ 
To pass to the limit, we need to show the following strong convergence result. 
\begin{lemma}\label{L1convergence}
For each species $\alpha$, $\t f_{\alpha,\eps}$ converges strongly to  $\bar f_{\alpha}$ in $L^1(0,T;L^1(\R^d))$ when $\eps\to 0$.
\end{lemma}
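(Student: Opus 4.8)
The plan is to upgrade the narrow convergence $\t f_{\alpha,\eps}\to\bar f_\alpha$ in $C([0,T],\M(\R^d))$ obtained in Lemma \ref{relative compactness} to strong convergence in $L^1(0,T;L^1(\R^d))$ by an Aubin--Lions--Simon compactness argument carried out on balls and then patched together using the uniform tightness already established in Step 1 of the proof of Lemma \ref{relative compactness}. Throughout I would work along the subsequence fixed after Lemma \ref{relative compactness} and use the uniform-in-$\eps$ bounds of Lemma \ref{uniformeps}, i.e. $\sup_\eps\|\t f_{\alpha,\eps}\|_{L^\infty(0,T;L^1\cap L^\infty)\cap L^2(0,T;H^1)}<\infty$.

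First I would extract a uniform bound on the time derivative. Writing \eqref{intermediate} as $\p_t\t f_{\alpha,\eps}=\sigma_\alpha\Delta\t f_{\alpha,\eps}+\sum_{\beta=1}^n\div\big(\t f_{\alpha,\eps}\,\nabla V_{\alpha\beta}^\eps\ast\t f_{\beta,\eps}\big)$, the diffusion term is bounded in $L^2(0,T;H^{-1}(\R^d))$ directly from the $L^2(0,T;H^1)$ bound. For the transport term I would use $\sup_\eps\|\nabla V_{\alpha\beta}^\eps\ast\t f_{\beta,\eps}\|_{L^\infty(0,T;L^\infty)}\le C(T)$, established exactly as in \eqref{ineq supercoulomb} (using that $\chi^\eps$ is a probability density, that $s\le d-2$, and the uniform $L^1\cap L^\infty$ bound), so that $\t f_{\alpha,\eps}\,\nabla V_{\alpha\beta}^\eps\ast\t f_{\beta,\eps}$ is bounded in $L^2(0,T;L^2(\R^d))$ and its divergence in $L^2(0,T;H^{-1}(\R^d))$. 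Summing, $\p_t\t f_{\alpha,\eps}$ is bounded in $L^2(0,T;H^{-1}(\R^d))$ uniformly in $\eps$.

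Next I would localize. For each fixed $R>0$ the embedding $H^1(B_R)\hookrightarrow L^2(B_R)$ is compact and $L^2(B_R)\hookrightarrow H^{-1}(B_R)$ is continuous, while $\{\t f_{\alpha,\eps}|_{B_R}\}$ is bounded in $L^2(0,T;H^1(B_R))$ with time derivatives bounded in $L^2(0,T;H^{-1}(B_R))$; the Aubin--Lions--Simon lemma then yields relative compactness of $\{\t f_{\alpha,\eps}|_{B_R}\}$ in $L^2(0,T;L^2(B_R))$, and since $(0,T)\times B_R$ has finite measure, H\"{o}lder's inequality upgrades this to relative compactness in $L^1(0,T;L^1(B_R))$. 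Diagonalizing over $R=R_k\uparrow\infty$, I would obtain a further subsequence (still relabelled) and a limit $g_\alpha\in L^1_{loc}$ with $\t f_{\alpha,\eps}\to g_\alpha$ in $L^1(0,T;L^1(B_{R_k}))$ for every $k$; comparing with the narrow convergence forces $g_\alpha=\bar f_\alpha$ almost everywhere.

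Finally I would pass from local to global. Given $\eta>0$, Step 1 of the proof of Lemma \ref{relative compactness} supplies $R>0$ with $\sup_{\eps>0}\sup_{t\in[0,T]}\int_{B_R^c}\t f_{\alpha,\eps}(t)\,\d x\le\eta$; since narrow convergence preserves tightness, after enlarging $R$ the same bound holds with $\bar f_\alpha$ in place of $\t f_{\alpha,\eps}$. Splitting $\int_0^T\!\int_{\R^d}|\t f_{\alpha,\eps}-\bar f_\alpha|\,\d x\,\d t$ into the contribution over $B_R$, which tends to $0$ as $\eps\to0$ by the local strong convergence just obtained, plus the two contributions over $B_R^c$, each at most $T\eta$, and then letting $\eta\to0$, gives $\t f_{\alpha,\eps}\to\bar f_\alpha$ in $L^1(0,T;L^1(\R^d))$ along the subsequence; a routine sub-subsequence argument, using that the limit is always the same $\bar f_\alpha$, then yields convergence of the whole family. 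The main obstacle I anticipate is exactly this patching step: the tightness at infinity controls only the $L^1$ mass and not any higher integrability, so one has to ensure it meshes cleanly with the Aubin--Lions compactness on bounded domains, and one must verify that the limit produced by the compactness argument is genuinely the narrow limit $\bar f_\alpha$ already constructed rather than an a priori different function.
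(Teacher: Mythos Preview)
Your proof is correct but takes a genuinely different route from the paper's. The paper avoids time-derivative estimates altogether: after reducing to a ball $B_R$ by tightness (exactly as you do), it invokes an Ehrling-type interpolation inequality $\|\mu\|_{L^1(B_R)}\le\delta\|\mu\|_{H^1(B_R)}+C(\delta)\|\mu\|_{H^{-p}(B_R)}$ with $p$ large enough that $\M(B_R)\hookrightarrow H^{-p}(B_R)$, and then uses the convergence in $C([0,T],\M(\R^d))$ already established in Lemma \ref{relative compactness} to control the $H^{-p}$ term directly via $d_\M$. This shows $(\t f_{\alpha,\eps})$ is Cauchy in $L^1([0,T]\times B_R)$ without ever touching $\p_t\t f_{\alpha,\eps}$. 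Your Aubin--Lions--Simon argument is more standard and requires the extra (though easy) step of bounding $\p_t\t f_{\alpha,\eps}$ in $L^2(0,T;H^{-1})$; in return it yields the slightly stronger $L^2(0,T;L^2(B_R))$ compactness before you drop to $L^1$. Both approaches share the same localization-plus-tightness skeleton; the paper's is shorter here because it recycles the narrow convergence rather than proving new a priori bounds. One minor point: your closing sub-subsequence argument to get the whole family converging is not needed for the lemma as stated, since the paper works throughout along the subsequence fixed after Lemma \ref{relative compactness} and establishes uniqueness only later in Section \ref{uniqueness subsection}.
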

\begin{proof}
By Lemma \ref{relative compactness} and Prokhorov's theorem,  $(\t f_{\alpha,\eps})_{\eps>0}$ are uniformly tight for each $\alpha=1,\ldots,n$. 
Then for any $\delta>0$, there exists a ball $B_{R_\delta}$ such that for any $\eps>0$
$$
\int_0^T\int_{B_{R_\delta}^c} \t f_{\alpha,\eps}(t, x) \,\d x \,\d t \leq \frac{\delta}{4}\quad \text{and}\quad \int_0^T\int_{B_{R_\delta}^c} \bar f_{\alpha}(t, x) \,\d x \,\d t \leq \frac{\delta}{4},
$$
where the last inequality holds since $\bar f_{\alpha}(t) \in L^1(\R^d)$. Moreover, we have
$$
\begin{aligned}
\int_0^T\int_{\R^d}|\t f_{\alpha,\eps}-\bar f_{\alpha}|\,\d x\,\d t=  &\, \int_0^T\int_{B_{R_\delta}^c}|\t f_{\alpha,\eps}-\bar f_{\alpha}|\,\d x \,\d t+\int_0^T\int_{B_{R_\delta}}|\t f_{\alpha,\eps}-\bar f_{\alpha}|\,\d x\,\d t\\\leq &\,  \frac{\delta}{2}+\int_0^T\int_{B_{R_\delta}}|\t f_{\alpha,\eps}-\bar f_{\alpha}|\,\d x\,\d t.    
\end{aligned}
$$
To prove  Lemma \ref{L1convergence}, by the arbitrariness of $\delta>0$, it is sufficient to show $\t f_{\alpha,\eps}$ converges strongly to  $\bar f_{\alpha}$ in $L^1(0,T;L^1(B_R))$ for any bounded ball $B_R$. 

Taking some $p>0$ large enough such that $\M(B_R)$ is continuously embedded into $H^{-p}(B_R)$, for any $\delta>0$ and some constant $C(\delta)$, we have the following inequality
\begin{equation}\label{triplets}
 \|\mu\|_{L^1(B_R)}\leq \delta \|\mu\|_{H^1(B_R)}+C(\delta) \|\mu\|_{H^{-p}(B_R)},   
\end{equation}
which follows from similar arguments in \cite{FP08,CG24}.
Therefore, for $\eps,\eps'>0$ letting $\mu=\t f_{\alpha,\eps}-\t f_{\alpha,\eps'}$ and plugging it into the inequality \eqref{triplets}, we get
$$
\begin{aligned}
\big\|\t f_{\alpha,\eps}-\t f_{\alpha,\eps'}\big\|_{L^1\left(0, T; \times B_R\right)}  &\,  \leq \delta\big\|\t f_{\alpha,\eps}-\t f_{\alpha,\eps'}\big\|_{L^1\left(0, T; H^1(B_R)\right)}  +C(\delta)\big\|\t f_{\alpha,\eps}-\t f_{\alpha,\eps'}\big\|_{L^1\left(0, T; H^{-p}\left(B_R\right)\right)} \\
 &\,  \leq \delta\big(\big\|\t f_{\alpha,\eps}\big\|_{L^1\left(0, T; H^1(B_R)\right)}+\big\|\t f_{\alpha,\eps'}\big\|_{L^1\left(0, T; H^1(B_R)\right)}\big)\\&\, \quad+C(\delta)\big\|\t f_{\alpha,\eps}-\t f_{\alpha,\eps'}\big\|_{L^1\left(0, T; H^{-p}\left(B_R\right)\right)} \\
 &\,  \leq C(T)\delta+C(\delta)\int_0^Td_{\M}(\t f_{\alpha,\eps},\t f_{\alpha,\eps'})\,\d t,
\end{aligned}
$$
where we use inequality $$\big\|\t f_{\alpha,\eps}\big\|_{L^1\left(0, T; H^1(B_R)\right)}\leq \sqrt{T}\big\|\t f_{\alpha,\eps}\big\|_{L^2\left(0, T; H^1(B_R)\right)},
$$ with $\big\|\t f_{\alpha,\eps}\big\|_{L^2\left(0, T; H^1(B_R)\right)}$ bounded uniformly in $\eps$. 
By the convergence of the sequence $(\t f_{\alpha,\eps})_{\eps>0}$ in $C([0,T],\mathcal{M}(\R^d))$, we have
$$
\limsup_{\eps,\eps^{\prime}\to 0}\big\|\t f_{\alpha,\eps}-\t f_{\alpha,\eps'}\big\|_{L^1\left([0, T] \times B_R\right)}\leq C(T)\delta,
$$
which implies that $(\t f_{\alpha,\eps})_{\eps>0}$ is a Cauchy sequence in $L^1\left([0, T] \times B_R\right)$ by the arbitrariness of $\delta$. Hence, $\t f_{\alpha,\eps}$ converges strongly to  $\bar f_{\alpha}$ in $L^1(0,T;L^1(B_R))$. It concludes Lemma \ref{L1convergence}. 
\end{proof}

It remains to pass to the limit. 
\begin{lemma}\label{pass to the limit}
For any $\vphi\in C_b^2(\R^d)$ and any $T>0$, $\bar f_\alpha \in L^\infty(0,T,L^1\cap L^\infty(\R^d))\cap L^2(0,T,H^1(\R^d))$ satisfies \eqref{aggregation-diffusion} in the following sense,
\begin{align*}
\int_{\R^d}\bar f_{\alpha}(T)\vphi\,\d x  = &\, \int_{\R^d}\bar f_{\alpha}^0\vphi\,\d x+\sigma_{\alpha}\int_0^T\int_{\R^d}\Delta\vphi  \bar f_{\alpha} \,\d x\,\d t \\&- \sum_{\beta =1}^n a_{\alpha\beta} \int_0^T\int_{\R^d}\nabla\vphi\cdot\bar f_{\alpha} \big( \nabla V \ast \bar f_{\beta}\big)\,\d x\,\d t,\quad 
\alpha=1,2,\ldots,n.   
\end{align*}
Consequently, $\bar f=(\bar f_1,\ldots,\bar f_n)$ is a weak solution of the aggregation-diffusion system \eqref{aggregation-diffusion}.
\end{lemma}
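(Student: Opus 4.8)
The plan is to start from the weak formulation that each intermediate density $\t f_{\alpha,\eps}$ satisfies by construction and to pass to the limit $\eps=\eps(N)\to0$ term by term, using the compactness and strong convergence of Lemmas \ref{relative compactness} and \ref{L1convergence} together with the uniform-in-$\eps$ bounds of Lemma \ref{uniformeps}. Since $\t f_{\alpha,\eps}$ solves \eqref{intermediate}, for every $\vphi\in C_b^2(\R^d)$ and every $T>0$ it satisfies
$$
\int_{\R^d}\t f_{\alpha,\eps}(T)\vphi\,\d x=\int_{\R^d}\bar f_\alpha^0\vphi\,\d x+\sigma_\alpha\int_0^T\!\!\int_{\R^d}\Delta\vphi\,\t f_{\alpha,\eps}\,\d x\,\d t-\sum_{\beta=1}^n a_{\alpha\beta}\int_0^T\!\!\int_{\R^d}\nabla\vphi\cdot\t f_{\alpha,\eps}\big(\nabla V\ast\chi^\eps\ast\t f_{\beta,\eps}\big)\,\d x\,\d t.
$$
The left-hand side converges to $\int_{\R^d}\bar f_\alpha(T)\vphi\,\d x$ because $\t f_{\alpha,\eps}(T)\to\bar f_\alpha(T)$ in $\M(\R^d)$ (narrow convergence) by Lemma \ref{relative compactness} and $\vphi\in C_b$; the initial term does not depend on $\eps$; and the diffusion term converges to $\sigma_\alpha\int_0^T\!\int_{\R^d}\Delta\vphi\,\bar f_\alpha\,\d x\,\d t$ since $\Delta\vphi\in L^\infty$ and $\t f_{\alpha,\eps}\to\bar f_\alpha$ strongly in $L^1((0,T)\times\R^d)$ by Lemma \ref{L1convergence}.

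The only delicate point is the nonlinear term, which I would handle by the splitting
$$
\t f_{\alpha,\eps}\big(\nabla V\ast\chi^\eps\ast\t f_{\beta,\eps}\big)-\bar f_\alpha\big(\nabla V\ast\bar f_\beta\big)=\big(\t f_{\alpha,\eps}-\bar f_\alpha\big)\big(\nabla V\ast\chi^\eps\ast\t f_{\beta,\eps}\big)+\bar f_\alpha\,\nabla V\ast\big(\chi^\eps\ast\t f_{\beta,\eps}-\bar f_\beta\big).
$$
For the first term on the right, the factor $\nabla V\ast\chi^\eps\ast\t f_{\beta,\eps}=a_{\alpha\beta}^{-1}\nabla V^\eps_{\alpha\beta}\ast\t f_{\beta,\eps}$ is bounded in $L^\infty((0,T)\times\R^d)$ uniformly in $\eps$ — the bound already used in Section \ref{mean-field limit}, coming from the decomposition $\nabla V=\nabla V|_{B_1}+\nabla V|_{B_1^c}$ with $\nabla V|_{B_1}\in L^{r'}$ for some $r'>1$ with $r'(s+1)<d$ (possible since $s\le d-2$) and $\nabla V|_{B_1^c}\in L^\infty$, via Young's inequality and \eqref{uniform in eps} — so, paired with the bounded function $\nabla\vphi$, this contribution is controlled by $C\|\t f_{\alpha,\eps}-\bar f_\alpha\|_{L^1((0,T)\times\R^d)}\to0$. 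For the second term, I would write $\chi^\eps\ast\t f_{\beta,\eps}-\bar f_\beta=\chi^\eps\ast(\t f_{\beta,\eps}-\bar f_\beta)+(\chi^\eps\ast\bar f_\beta-\bar f_\beta)$: the first summand has $L^1((0,T)\times\R^d)$-norm at most $\|\t f_{\beta,\eps}-\bar f_\beta\|_{L^1((0,T)\times\R^d)}\to0$, and the second goes to $0$ in $L^1((0,T)\times\R^d)$ by the standard mollifier property since $\bar f_\beta\in L^1((0,T)\times\R^d)$; interpolating this $L^1$-convergence with the uniform $L^\infty$-bound upgrades it to convergence in $L^1(0,T;L^r(\R^d))$ for every finite $r$. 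Splitting $\nabla V$ once more into its near- and far-field pieces and applying Young's inequality ($\|\nabla V|_{B_1}\ast g\|_{L^\infty}\le\|\nabla V|_{B_1}\|_{L^{r'}}\|g\|_{L^r}$ and $\|\nabla V|_{B_1^c}\ast g\|_{L^\infty}\le\|\nabla V|_{B_1^c}\|_{L^\infty}\|g\|_{L^1}$) then yields $\nabla V\ast(\chi^\eps\ast\t f_{\beta,\eps}-\bar f_\beta)\to0$ in $L^1(0,T;L^\infty(\R^d))$, so that, since $\bar f_\alpha\in L^\infty(0,T;L^1(\R^d))$ and $\nabla\vphi\in L^\infty$, this term vanishes in the limit as well.

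Putting the four limits together gives \eqref{distributional} for $\bar f=(\bar f_1,\dots,\bar f_n)$; the membership $\bar f_\alpha\in L^\infty(0,T;L^1\cap L^\infty)\cap L^2(0,T;H^1)$ has already been recorded via Banach--Alaoglu from the uniform bounds of Lemma \ref{uniformeps}, and nonnegativity and unit mass pass to the limit because each $\t f_{\alpha,\eps}$ is a probability density and the convergence holds in $L^1$; hence $\bar f$ is a weak solution of \eqref{aggregation-diffusion}. The main obstacle throughout is precisely this passage to the limit in the nonlinear drift: the kernel $\nabla V\sim|x|^{-(s+1)}$ is locally singular and decays too slowly at infinity to be integrable there, so no naive weak--strong argument applies; the remedy is to combine the uniform $L^\infty((0,T)\times\R^d)$-bound on the regularised force field with the upgrade of the strong $L^1$-convergence of $\t f_{\beta,\eps}$ to $L^1(0,T;L^r(\R^d))$ for finite $r$, and the near/far splitting of $\nabla V$.
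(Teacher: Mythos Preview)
Your proof is correct and follows essentially the same strategy as the paper: start from the weak formulation of \eqref{intermediate}, pass the linear terms via the convergence in $C([0,T],\M(\R^d))$ and strong $L^1$ convergence of Lemma \ref{L1convergence}, and treat the nonlinear drift by combining the uniform $L^1\cap L^\infty$ bounds of Lemma \ref{uniformeps} with the near/far splitting of $\nabla V$.

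The organisation of the nonlinear step differs slightly. The paper uses a three-term telescoping
\[
\bar f_\alpha\nabla V\ast\bar f_\beta-\t f_{\alpha,\eps}\nabla V^\eps\ast\t f_{\beta,\eps}
=(\bar f_\alpha-\t f_{\alpha,\eps})\nabla V\ast\bar f_\beta+\t f_{\alpha,\eps}\nabla V\ast(\bar f_\beta-\t f_{\beta,\eps})+\t f_{\alpha,\eps}(\nabla V-\nabla V\ast\chi^\eps)\ast\t f_{\beta,\eps},
\]
and for the last term invokes mollifier convergence of $\nabla V|_{B_1}$ in $L^p$ (with $1<p<d/(s+1)$) and of $\nabla V|_{B_1^c}$ in $L^q$ ($q>d/(s+1)$). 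You instead use a two-term split, push the mollifier onto the density via $\chi^\eps\ast\t f_{\beta,\eps}-\bar f_\beta=\chi^\eps\ast(\t f_{\beta,\eps}-\bar f_\beta)+(\chi^\eps\ast\bar f_\beta-\bar f_\beta)$, and then interpolate the resulting $L^1$ convergence with the uniform $L^\infty$ bound to reach $L^1(0,T;L^r)$ for the finite $r$ needed in Young's inequality against $\nabla V|_{B_1}\in L^{r'}$. Both routes are equivalent in strength; yours is marginally more streamlined, while the paper's makes the role of the regularisation of the kernel itself more explicit.
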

\begin{proof}

Due to \eqref{intermediate}, for every $\vphi\in C_b^2(\R^d)$, the identity holds
\begin{align*}
\int_{\R^d}\t f_{\alpha,\eps}(T,x)\vphi(x)\,\d x  = &\, \int_{\R^d}\bar f_{\alpha}^0(x)\vphi(x)\,\d x+\sigma_{\alpha}\int_0^T\int_{\R^d}\Delta\vphi  \t f_{\alpha,\eps} \,\d x\,\d t \\&- \sum_{\beta =1}^n\int_0^T\int_{\R^d}\nabla\vphi\cdot\t f_{\alpha,\eps} \big( \nabla V_{\alpha\beta}^\eps \ast \t f_{\beta,\eps}\big)\,\d x\,\d t.   
\end{align*}
The first line of above equation converges to its corresponding limit. For the nonlinear term, it holds
$$
\begin{aligned}
&\bigg|\sum_{\beta =1}^n a_{\alpha\beta}\int_0^T\int_{\R^d} \nabla\vphi\cdot \bar f_\alpha\nabla V\ast \bar f_\beta\,\d x\,\d \tau -\sum_{\beta=1}^n\int_0^T\int_{\R^d} \nabla\vphi\cdot \t f_{\alpha,\eps}\nabla V_{\alpha\beta}^\eps\ast \t f_{\beta,\eps}\,\d x\,\d \tau \bigg|\\
\leq & C \sum_{\beta =1}^n\Big|\int_0^T\int_{\R^d} \nabla\vphi\cdot \bar f_\alpha\nabla V\ast \bar f_\beta\,\d x\,\d \tau -\int_0^T\int_{\R^d} \nabla\vphi\cdot \t f_{\alpha,\eps}\nabla V\ast\chi^\eps\ast \t f_{\beta,\eps}\,\d x\,\d \tau \Big|\\
\leq & C\|\nabla \vphi\|_{L^\infty}\sum_{\beta =1}^n\int_0^T\int_{\R^d} \Big|\bar f_\alpha\nabla V\ast \bar f_\beta-\t f_{\alpha,\eps}\nabla V\ast\chi^\eps\ast \t f_{\beta,\eps}\Big|\,\d x\,\d \tau ,
\end{aligned}
$$
where the last integral can be separated as follows:
$$
\begin{aligned}
\int_0^T\!\!\int_{\R^d} \Big|\bar f_\alpha\nabla V\ast \bar f_\beta-\t f_{\alpha,\eps}\nabla V\ast\chi^\eps\ast \t f_{\beta,\eps}\Big|\,\d x\,\d \tau    
&\leq \int_0^T\!\!\int_{\R^d} \Big|\bar f_\alpha\nabla V\ast \bar f_\beta-\t f_{\alpha,\eps}\nabla V\ast \bar f_\beta\Big|\,\d x\,\d \tau \\ &+\int_0^T\!\!\int_{\R^d} \Big|\t f_{\alpha,\eps}\nabla V\ast \bar f_\beta-\t f_{\alpha,\eps}\nabla V\ast \t f_{\beta,\eps}\Big|\,\d x\,\d \tau \\&+\int_0^T\!\!\int_{\R^d} \Big|\t f_{\alpha,\eps}\nabla V\ast \t f_{\beta,\eps}-\t f_{\alpha,\eps}\nabla V\ast\chi^\eps\ast \t f_{\beta,\eps}\Big|\,\d x\,\d \tau \\
=:&I+II+III.
\end{aligned}
$$
The first term $I$ can be controlled as
$$
I\leq C \big(\|\bar f_{\beta}\|_{L^\infty(0,T;L^\infty)}+\|\bar f_{\beta}\|_{L^\infty(0,T;L^1)}\big)\int_0^T\int_{\R^d} \Big|\bar f_\alpha-\t f_{\alpha,\eps}\Big|\,\d x\,\d \tau ,
$$
which converges to $0$ since $\bar f_\beta\in L^\infty(0,T;L^1\cap L^\infty(\R^d))$ and the strong convergence (Lemma \ref{L1convergence}). The second term can be estimated by
$$
\begin{aligned}
II\leq &\,\int_0^T\int_{\R^d}\t f_{\alpha,\eps} \Big|\nabla V|_{B_1}\ast (\bar f_\beta-\t f_{\beta,\eps})\Big|\,\d x\,\d \tau +\int_0^T\int_{\R^d}\t f_{\alpha,\eps} \Big|\nabla V|_{B_1^c}\ast (\bar f_\beta-\t f_{\beta,\eps})\Big|\,\d x\,\d \tau  \\
\leq  &\, C\|\t f_{\alpha,\eps}\|_{L^\infty(0,T;L^\infty)}\int_0^T\int_{\R^d} \big|\bar f_\beta-\t f_{\beta,\eps}\big|\,\d x\,\d \tau +C\|\t f_{\alpha,\eps}\|_{L^1(0,T;L^1)}\int_0^T\int_{\R^d} \big|\bar f_\beta-\t f_{\beta,\eps}\big|\,\d x\,\d \tau \\
\leq  &\,C\big(\|\t f_{\alpha,\eps}\|_{L^\infty(0,T;L^\infty)}+T\|\t f_{\alpha,\eps}\|_{L^\infty(0,T;L^1)}\big)\int_0^T\int_{\R^d} \big|\bar f_\beta-\t f_{\beta,\eps}\big|\,\d x\,\d \tau, 
\end{aligned}
$$
which converges to $0$ by Lemma \ref{L1convergence}. The third term satisfies
$$
\begin{aligned}
III\leq & \,\int_0^T\big\|\t f_{\alpha,\eps}\big\|_{L^1}\big\|(\nabla V|_{B_1}\ast\chi^\eps-\nabla V|_{B_1})\ast \t f_{\beta,\eps}\big\|_{L^\infty}\,\d \tau\\
&+ \int_0^T\big\|\t f_{\alpha,\eps}\big\|_{L^1}\big\|(\nabla V|_{B_1^c}\ast\chi^\eps-\nabla V|_{B_1^c})\ast \t f_{\beta,\eps}\big\|_{L^\infty}\,\d \tau\\\leq & \int_0^T\big\|\t f_{\alpha,\eps}\big\|_{L^1}\big\|\nabla V|_{B_1}\ast\chi^\eps-\nabla V|_{B_1}\big\|_{L^p}\big\| \t f_{\beta,\eps}\big\|_{L^{p'}}\,\d \tau\\
&+ \int_0^T\big\|\t f_{\alpha,\eps}\big\|_{L^1}\big\|\nabla V|_{B_1^c}\ast\chi^\eps-\nabla V|_{B_1^c}\big\|_{L^q}\big\| \t f_{\beta,\eps}\big\|_{L^{q'}}\,\d \tau, 
\end{aligned}
$$
where $1<p<\frac{d}{s+1}<q<\infty$ and $p',q'$ are their conjugate numbers.
Then $III$ converges to $0$ since $\nabla V|_{B_1}\in L^p(\R^d)$, $\nabla V|_{B_1^c}\in L^q (\R^d)$ and hence $$\lim_{\eps\to0}\big\|\nabla V|_{B_1}\ast\chi^\eps-\nabla V|_{B_1}\big\|_{L^p}=0\quad \text{and}\quad \lim_{\eps\to0}\big\|\nabla V|_{B_1^c}\ast\chi^\eps-\nabla V|_{B_1^c}\big\|_{L^q}=0.$$
    
\end{proof}

\bigskip

\bibliographystyle{abbrv}
\bibliography{ref}


\end{document}